\documentclass[11pt]{amsart}

\usepackage[a4paper,margin=1.1in]{geometry}
\usepackage[T1]{fontenc}
\usepackage{lmodern}
\usepackage{microtype}
\usepackage{amsmath,amssymb,mathtools}
\usepackage{enumitem}
\usepackage{xurl}
\usepackage{xcolor}
\usepackage{tikz}
\usetikzlibrary{arrows.meta,calc,positioning}
\usepackage[colorlinks=true,linkcolor=blue!45!black,citecolor=blue!45!black,urlcolor=blue!45!black]{hyperref}

\usepackage[all]{xy}
\usepackage{relsize}
\usepackage[bbgreekl]{mathbbol}

\DeclareSymbolFontAlphabet{\mathbb}{AMSb}
\DeclareSymbolFontAlphabet{\mathbbl}{bbold}

\numberwithin{equation}{section}

\theoremstyle{plain}
\newtheorem{thm}{Theorem}[section]
\newtheorem{prop}[thm]{Proposition}
\newtheorem{lem}[thm]{Lemma}
\newtheorem{cor}[thm]{Corollary}

\theoremstyle{definition}
\newtheorem{defn}[thm]{Definition}
\newtheorem{notation}[thm]{Notation}

\newtheorem{construction}[thm]{Construction}

\newtheorem{computation}[thm]{Computation}

\theoremstyle{remark}
\newtheorem{rem}[thm]{Remark}
\newtheorem{ex}[thm]{Example}
\newtheorem{claim}[thm]{Claim}

\newcommand{\Z}{\mathbb{Z}}
\newcommand{\tilf}{\widetilde{f}}
\newcommand{\tilg}{\widetilde{g}}
\newcommand{\tilb}{\widetilde{b}}
\newcommand{\tilalpha}{\widetilde{\alpha}}
\newcommand{\cj}{\mathrm{conj}}

\newcommand{\Fil}{\mathrm{Fil}}

\newcommand{\calN}{\mathcal{N}}

\newcommand{\Einfty}{\mathbf{E}_\infty}
\newcommand{\C}{\mathbb{C}}
\newcommand{\N}{\mathbb{N}}

\newcommand{\B}{\mathbb{B}}
\newcommand{\calO}{\mathcal{O}}

\newcommand{\cofib}{\mathrm{cofib}}
\newcommand{\fib}{\mathrm{fib}}
\newcommand{\Q}{\mathbb{Q}}
\newcommand{\T}{\mathbb{T}}

\newcommand{\F}{\mathbb{F}}

\newcommand{\Hom}{\mathrm{Hom}}

\newcommand{\can}{\mathrm{can}}

\newcommand{\CycSp}{\mathrm{CycSp}}

\newcommand{\QSyn}{\mathcal{Q}\mathrm{Syn}}
\newcommand{\gr}{\mathrm{gr}}
\newcommand{\CAlg}{\mathrm{CAlg}}

\newcommand{\Ext}{\mathrm{Ext}}

\newcommand{\perf}{\mathrm{perf}}

\newcommand{\THH}{\mathrm{THH}}
\newcommand{\HH}{\mathrm{HH}}

\newcommand{\TC}{\mathrm{TC}}
\newcommand{\TP}{\mathrm{TP}}
\newcommand{\Sphere}{\mathbb{S}}

\newcommand{\Prism}{\mathbbl{\Delta}}

\newcommand{\Fun}{\mathrm{Fun}}

\newcommand{\ima}{\mathrm{im}}

\newcommand{\coker}{\mathrm{coker}}

\newcommand{\rmD}{\mathrm{D}}

\title[A Note on Topological Hochschild Homology Relative to \texorpdfstring{$\Sphere_{W(k)}[x_0,x_1,\ldots,x_n]$}{}]
{A Note on Topological Hochschild Homology Relative to \texorpdfstring{$\Sphere_{W(k)}[x_0,x_1,\ldots,x_n]$}{}}

\begin{document}

\author{Jingbang Guo} 
\thanks{The author would thank the support from Guozhen Wang and the funding NSFC-12226002.}

\begin{abstract}
   We explain the relation between the relative topological Hochschild homology $\THH(R/\Sphere_{W(k)}[x_0,\ldots,x_n])$ and the Nygaard completed Frobenius twisted relative prismatic cohomology $\widehat{\Prism}^{(1)}_{R/W(k)[x_0,\ldots,x_n]^\wedge}$, where $W(k)[x_0,x_1,\ldots,x_n]\rightarrow R$ is relatively quasiregular semiperfectoid. As an application, for $R=\Z_p[x]/(px)$, we compute $\pi_*\THH(R)^\wedge_p$ by descent along $\THH(R)^\wedge_p\rightarrow \THH(R/\Sphere_p[z,x])$, where $R=\Z_p[x]/(px)$ is regarded as an $\Einfty$-$\Sphere_p[z,x]$-algebra through $\Sphere_p[z,x]\xrightarrow{z\mapsto p,x\mapsto x}\Z_p[x]/(px)$.
\end{abstract}

\maketitle

{\setcounter{tocdepth}{1}\tableofcontents}

%%%%%%%%%%%%%%%%%%%%%%%%%%%%%%%%%%%%%%%%%%%%%%%%%%%%%%%%%%%%%%%%%%%%%%%%%%%%%%%%%%%%%%%
\section{Introduction}\label{section:introduction}

Let $p$ be a prime number. Let $k$ be a perfect ring of characteristic $p$, and $\Sphere_{W(k)}$ the corresponding ring of spherical Witt vectors with coefficients in $k$. Set 
    $$
    \B_n:=\Sphere_{W(k)}[x_0,x_1,\ldots,x_n]=\Sphere_{W(k)}\otimes (\Sigma^{\infty}_+\N)^{\otimes n+1};
    $$    
also, set $B_n=W(k)[x_0,x_1,\ldots,x_n]$ equipped with the $\delta$-structure determined by the canonical $\delta$-structure on $W(k)$ and $\delta(x_i)=0$, $0\leq i\leq n$. 

In this note, we explain the relation between the relative topological Hochschild homology $\THH(R/\B_n)$ and the Nygaard completed Frobenius twisted relative prismatic cohomology $\widehat{\Prism}^{(1)}_{R/B_n^\wedge}$, where $W(k)[x_0,x_1,\ldots,x_n]\rightarrow R$ is relatively quasiregular semiperfectoid (in the sense of Definition \ref{defn:qrsp-relative}, which is a restricted version of \cite[Definition 9.2]{AKN23} or \cite[Definition 15.1]{BS22}).

As an application, for $R=\Z_p[x]/(px)$, we compute $\pi_*\THH(R)^\wedge_p$ by descent along $\THH(R)^\wedge_p\rightarrow \THH(R/\Sphere_p[z,x])$, where $R=\Z_p[x]/(px)$ is regarded as an $\Einfty$-$\Sphere_p[z,x]$-algebra through $\Z_p[z,x]\xrightarrow{z\mapsto p,x\mapsto x}\Z_p[x]/(px)$.

\subsection{Main Results}\label{subsection:main-results}

\begin{thm}[Theorem \ref{thm:THH-for-qrsp-relative-polynomail}, Corollary \ref{cor:THH-delta-power-Nygaard}]\label{thm:intro-THH-for-qrsp-rel}
    Let $R$ be a commutative ring with a relatively quasiregular semiperfectoid map $B_n\rightarrow R$; more precisely, the following conditions are satisfied:
    \begin{itemize}
        \item[(i)] The ring $R$ has bounded $p$-power torsion and is $p$-complete. 

        \item[(ii)] The ring homomorphism $B_n\rightarrow  R$ is surjective. 

        \item[(iii)] The ideal $\ker(B_n\rightarrow R)$ contains a distinguished ideal $I$ of $B_n$ (see Definition \ref{defn:distinguished-ideals}), and $L\Omega^1_{R/(B_n/I)}\in \rmD(R)$ has $p$-complete Tor-amplitude concentrated in homological degree $[1,1]$.
    \end{itemize} 
    Now let $B_n^\wedge$ denote the $(p,I)$-completion of $B_n$ and regard $(B_n^\wedge, IB_n^\wedge)$ as a prism. Then the following statements hold.
    \begin{itemize}
        \item[(1)] The homotopy groups of $\THH(R/\B_n)$ are concentrated in even degrees, and there is a canonical isomorphism of graded rings 
        $$
        \pi_{2*} \THH(R/\B_n)\simeq \gr^*_\calN\widehat{\Prism}^{(1)}_{R/B_n^\wedge}.
        $$

        \item[(2)] The homotopy groups of $\TC^-(R/\B_n)$ are concentrated in even degrees, and there is a canonical isomorphism of graded rings 
        $$
        \pi_{2*}\TC^-(R/\B_n)\simeq \bigoplus_{*\in \Z\ \text{graded}}\calN^{\geq *}\widehat{\Prism}^{(1)}_{R/B_n^\wedge}\{*\}.
        $$
        %\item[(4)] Under the identifications in (2) and (3), the canonical map $\can:\TC^-(R/\B_n)\rightarrow \TP(R/\B_n)$ induces on homotopy groups the canonical inclusions for Nygaard filtrations.

        %\item[(5)] Under the identifications (2), and (3), the cyclotomic Frobenius $\varphi_p^{h\T}:\TC^-(R/\B_n)\rightarrow \TP(R/\B_n)$ induces on homotopy groups the following map 
        %$$
        %\bigoplus_{*\in \Z\ \text{graded}}\calN^{\geq *}\widehat{\Prism}^{(1)}_{R/B_n^\wedge}\{*\}\xrightarrow{c\circ\phi_{/B_n^\wedge}}\bigoplus_{*\in \Z\ \text{graded}}\widehat{\Prism}^{(1)}_{R/B_n^\wedge}\{*\},
        %$$
        %where $c$ and $\phi_{/B_n^\wedge}$ are maps in Notation \ref{notation:relative-prismatic-cohomology}, (e) and (f), respectively. 

        %Consider the following commutative diagram:
        %$$
        %\xymatrix@R=50pt@C=50pt{
        %\TC^-(R/\B_n) \ar[r]^{\varphi_p^{h\T}} \ar[d] & %\TP(R/\B_n) \ar[d] \\
        %\THH(R/\B_n) \ar[r]^{\varphi_p} & \THH(R/\B_n)^{tC_p}.
        %}
        %$$
        %With the identifications in (1), (2), and (3), by taking the homotopy groups of the above commutative diagram we obtain the following commutative diagram of graded rings:
        %$$
        %\xymatrix@R=50pt@C=50pt{
        %\bigoplus_{*\in \Z\ \text{graded}}\calN^{\geq %*}\widehat{\Prism}^{(1)}_{R/B_n^\wedge}\{*\} 
        %\ar[r]^{c\circ \phi_{/B_n^\wedge}} \ar[d] & \bigoplus_{*\in \Z\ %\text{graded}}\widehat{\Prism}^{(1)}_{R/B_n^\wedge}\{*\} \ar[d] \\
        %\gr^*_\calN\widehat{\Prism}^{(1)}_{R/B_n^\wedge} \ar[r]^-{c\circ %\gr^*\phi_{/B_n^\wedge}} & \bigoplus_{*\in \Z\ \text{graded}}%(\widehat{\Prism}^{(1)}_{R/B_n^\wedge}\{*\}/\phi(I)).
        %}
        %$$
        \item[(3)]  If more specifically, $I=(d)\subset B_n$ is a distinguished principal ideal and $R=B_{n}/(I,f_1,\ldots,f_r)$, where $f_1,\ldots,f_r\in B_n$ give a Koszul regular sequence in $\overline{B}_n:=B_{n}/I$. Then there is a canonical isomorphism of graded rings 
        $$
        \pi_{2*}\TC^-(R/\B_n)\simeq \bigoplus_{*\in \Z\ \text{graded}}\calN^{\geq *}B_n^\wedge\{\frac{\phi(f_1)}{\phi(d)},\ldots,\frac{\phi(f_r)}{\phi(d)}\}^\wedge_{(p,\calN)}\{*\}.
        $$
        
        \item[(4)] In the situation of (3), we further assume that either of the following conditions is satisfied: 
    \begin{itemize}
        \item[(i)] $R$ is $p$-torsion-free and $\phi^k(f_i)\in (d,f_1,\ldots,f_r)$, $k\geq 0,1\leq i\leq r$.

        \item[(ii)] $\phi(f_i)=g_{i,1}-g_{i,0}$ with $\delta(g_{i,0})=\delta(g_{i,1})=0$, $1\leq i\leq r$.
    \end{itemize} 
    Then there is an isomorphism of graded rings with divided power structures
    $$
    \pi_{2*}(\THH(R/\B_n))\simeq \overline{B}_n[u]\otimes_{\overline{B}_n}R\langle t_{1},\ldots,t_r\rangle,
    $$
    where $u$ corresponds to the class of $d\in \calN^{\geq 1}B^\wedge_n\{\frac{\phi(f_1)}{\phi(d)},\ldots,\frac{\phi(f_r)}{\phi(d)}\}^\wedge_{(p,\calN)}$  and $t_i^{[p^k]}$, up to the multiplication by a unit, corresponds to the class of $f_i^{(k)}\in \calN^{\geq p^k}B^\wedge_n\{\frac{\phi(f_1)}{\phi(d)},\ldots,\frac{\phi(f_r)}{\phi(d)}\}^\wedge_{(p,\calN)}$ (see Construction \ref{construction:generators-for-envelopes}).
    \end{itemize}

\end{thm}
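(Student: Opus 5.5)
Since $\THH(R/\B_n)$ and $\TC^-(R/\B_n)$ both descend along quasisyntomic covers, I will reduce everything to the Bhatt--Morrow--Scholze/Bhatt--Lurie comparison in the perfectoid base case. The first step is to choose a perfectoid cover of the base. Adjoin all $p$-power roots of $x_0,\ldots,x_n$ and take the $(p,I)$-completed perfection $B_{n,\infty}$ of $B_n$ with respect to the $\delta$-structure; this is a perfect prism. Its spherical lift is $\B_{n,\infty}=\Sphere_{W(k)}[x_i^{1/p^\infty}]$, the spherical monoid algebra on $\N[1/p]^{n+1}$. I then set $S=(R\otimes_{B_n}B_{n,\infty}/I)^\wedge_p$. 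Condition (iii) makes $S$ quasiregular semiperfectoid over the perfectoid ring $\overline{B_{n,\infty}}$.

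For such $S$, the results of BMS2 and AKN identify $\pi_*\THH(S/\B_{n,\infty})$, $\pi_*\TC^-$ and $\pi_*\TP$ with the Nygaard graded pieces and Nygaard filtration of $\widehat{\Prism}_S$. Since $\B_{n,\infty}$ is a perfect monoid ring, $\THH(S/\B_{n,\infty})\simeq\THH(S)^\wedge_p$ after $p$-completion. I then use the base-change formula
\[
\THH(R/\B_n)\otimes_{\THH(\B_{n,\infty}/\B_n)}\B_{n,\infty}\simeq\THH(S/\B_{n,\infty}).
\]
Writing down the Čech nerve of $\B_n\to\B_{n,\infty}$ gives a descent description of $\THH(R/\B_n)$. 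On the prismatic side, relative prismatic cohomology over $B_n^\wedge$ satisfies the same descent along $B_n^\wedge\to B_{n,\infty}$, with the Frobenius twist $(1)$ absorbing the fact that $\phi$ is an isomorphism on the perfection. Comparing the two cosimplicial objects degreewise, and checking that each term is even and that the Nygaard filtrations match, should give (1) and (2). Evenness survives the totalization because $\pi_*$ of every term is even and the relevant cosimplicial groups are acyclic in positive degrees: the map $B_n\to B_{n,\infty}$ is faithfully flat, and the Nygaard-graded pieces are given by $p$-completely flat descent.

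The main obstacle is the comparison identifying the Frobenius-twisted relative prismatic cohomology $\widehat{\Prism}^{(1)}_{R/B_n^\wedge}$ with the descent of $\widehat{\Prism}_{S\otimes\cdots}$, compatibly with the Nygaard filtration. This is exactly where the twist is forced. My first attempt will be to compute both sides on the Čech nerve explicitly as completed $p$-divided-power polynomial algebras and match the generators. An alternative route for (1) is to prove $\gr^0$ first, namely $\pi_0\THH(R/\B_n)=R$ against $\overline{\Prism}^{(1)}$ via the relative Hodge--Tate comparison, and then bootstrap using the Bökstedt element.

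For (3), I will invoke the prismatic envelope description. When $I=(d)$ and $f_1,\ldots,f_r$ is Koszul regular, the relative prism of $R$ is the envelope $B_n^\wedge\{f_i/d\}^\wedge$. Its Frobenius twist is $B_n^\wedge\{\phi(f_i)/\phi(d)\}$, which I then Nygaard-complete; substituting this into (2) gives (3). For (4), I will compute $\gr_\calN$ using the generators $f_i^{(k)}$ from Construction \ref{construction:generators-for-envelopes}. Hypotheses (i) and (ii) ensure that the divided powers $\delta$-iterates behave like $t_i^{[p^k]}$ up to units, so that $\gr^*_\calN$ becomes $\overline{B}_n[u]\otimes R\langle t_i\rangle$, with $u$ coming from $d\in\calN^{\ge1}$. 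I will check this by a Hilbert-series or filtration comparison after reducing modulo $d$.
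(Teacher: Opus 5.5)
Your outline for (1)--(3) matches the paper: base change along $\B_n\to\B_{n,\infty}$, Corollary~\ref{cor:perfection-and-qrsp} to make $S$ quasiregular semiperfectoid, Theorem~\ref{thm:THH-for-qrsp} for $S$, faithfully flat descent, and the envelope description \cite[Theorem 9.6]{AKN23} for (3). The descent step, however, is not set up correctly.

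\textbf{The base-change formula.} The expression $\THH(R/\B_n)\otimes_{\THH(\B_{n,\infty}/\B_n)}\B_{n,\infty}$ is not defined. $R$ is not a $\B_{n,\infty}$-algebra (there is no $p$-th root of $x$ in $\Z_p[x]/(px)$), so $\THH(R/\B_n)$ is not a $\THH(\B_{n,\infty}/\B_n)$-module. With $S$ in place of $R$ the formula holds, but it relates $\THH(S/\B_{n,\infty})$ to $\THH(S/\B_n)$, not to $\THH(R/\B_n)$. What you need is base change in the base, $(\THH(R/\B_n)\otimes_{\B_n}\B_{n,\infty})^\wedge_p\simeq\THH(S/\B_{n,\infty})$. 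By flatness this gives $\pi_*\THH(R/\B_n)\widehat{\otimes}_{B_n}B_{n,\infty}\simeq\pi_*\THH(S/\B_{n,\infty})$, so evenness descends at once. For $\TC^-$, the base-change map on $\pi_*$ is an isomorphism only after Nygaard completion.

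\textbf{The ``main obstacle''.} You cannot compare the two \v{C}ech nerves degreewise via BMS2. For $m\geq 1$ the base $B_{n,\infty}^{\widehat{\otimes}_{B_n}(m+1)}$ is not perfect: $x_0^{1/p}\otimes 1-1\otimes x_0^{1/p}$ is nonzero and nilpotent mod $p$. Also, a general relatively quasiregular semiperfectoid $R$ has no divided-power presentation to compute with, so your proposed first attempt would not go through. The paper removes this obstacle by citing base change for the Nygaard-filtered relative prismatic data \cite[Theorem 1.2 (7)]{AKN23}:
$(\calN^{\geq\star}\widehat{\Prism}^{(1)}_{R/B_n^\wedge}\{*\}\,\widehat{\otimes}^L_{B_n}B_{n,\infty})^\wedge_\calN\simeq\calN^{\geq\star}\widehat{\Prism}^{(1)}_{S/B_{n,\infty}}\{*\}$.
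Then on both sides every cosimplicial term is a flat base change of the degree-$0$ term, and Theorem~\ref{thm:THH-for-qrsp} is needed only for $S$.

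\textbf{Part (4).} Saying that hypotheses (i) and (ii) make the $\delta$-iterates behave like $t_i^{[p^k]}$ restates the claim; the mechanism is missing. Lemma~\ref{lem:Nygaard-generators-divided-power} gives
$pf_i^{(k+1)}+(f_i^{(k)})^p=\delta^k(\tfrac{\phi(f_i)}{\phi(d)})\,d^{p^{k+1}}$.
A priori the right-hand side lies only in $\calN^{\geq p^{k+1}}$, so it does not yield the divided-power relation in $\gr^{p^{k+1}}_\calN$.

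The key input is $\delta^k(\tfrac{\phi(f_i)}{\phi(d)})\in\calN^{\geq 1}$ (Lemma~\ref{lem:delta-power-Nygaard}), and this is where (i) enters:
\begin{itemize}
\item $p$-torsion-freeness of $R$ makes the envelope $B_n^\wedge\{\tfrac{f_i}{d}\}$ a transversal prism (Lemma~\ref{lem:transversal-envelope});
\item hence $(d)\cap(\phi^j(d))=(d\phi^j(d))$ by \cite[Corollary 2.2.9]{BL22}, which gives the base case of an induction on $k$;
\item case (ii) is reduced to (i) by pulling back along the $\delta$-map $x_{i,j}\mapsto g_{i,j}$ from the universal polynomial case.
\end{itemize}
This produces a graded ring map $\overline{B}_n[u]\otimes_{\overline{B}_n}R\langle t_1,\ldots,t_r\rangle\to\gr^*_\calN$. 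It is shown to be an isomorphism on the cofiber of $u$, which is $\gr^{\cj}_*\overline{\Prism}_{R/B_n^\wedge}\simeq R\langle t'_1,\ldots,t'_r\rangle$ by Notation~\ref{notation:relative-prismatic-cohomology}(f). This $u$-Bockstein comparison is the precise form your ``comparison modulo $d$'' must take. A Hilbert-series count is unavailable when $R$ has $p$-torsion, as in the paper's application.
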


\begin{rem}\label{rem:intro-THH-qrsp-rel}
The relative topological Hochschild homology $\THH(-/\Sphere_{W(k)}[x_0,\ldots,x_n])$ had been studied in \cite[$\S$ 11]{BMS19}, \cite{KN22} and \cite{LW22}, where one might find special and essential cases of Theorem \ref{thm:intro-THH-for-qrsp-rel}; see in particular \cite[Proposition 11.10]{BMS19}, \cite[Proposition 3.7, Theorem 9.1]{KN22}, and \cite[$\S$ 3]{LW22}. The key idea of Theorem \ref{thm:intro-THH-for-qrsp-rel} had already appeared in \cite[Proposition 11.10]{BMS19}. A more advanced version of Theorem \ref{thm:intro-THH-for-qrsp-rel} allowing bases other than spectra of the form $\B_n$ and relating $\TC^-$ with prismatic cohomology relative to $\delta$-rings (in the sense of \cite{AKN23}) has been announced in \cite{Krause23}.
\end{rem}

We also compute $\THH(\Z_p[x]/(px))^\wedge_p$ through descent along $\THH(\Z_p[x]/(px))^\wedge_p\rightarrow \THH((\Z_p[x]/(px))/\Sphere_p[z,x])$, where $\Z_p[x]/(px)$ is regarded as an $\Sphere_p[z,x]$-algebra through $\Sphere_p[z,x]\xrightarrow{z\mapsto p,x\mapsto x}\Z_p[x]/(px)$.

\begin{thm}[Theorem \ref{thm:computation}]\label{thm:intro-computation}
    Let $R=\Z_p[x]/(px)$. There is a multiplicative second quadrant homology type spectral sequence $\{E^n_{i,j}\}$ converging to $\pi_*\THH(R)^\wedge_p$, such that 
    \begin{itemize}
        \item[(a)] This spectral sequence collapses at the $E^2$-page.

        \item[(b)] The $E^2$-page $E^2_{-*,\star}$ can be computed by the following complex of graded $R$-modules (here $*$ corresponds to the cohomological degree, and $\star$ corresponds to the grading)
        $$
        R\langle t_{zx}\rangle[u]\xrightarrow{(D'_zdz,D'_x dx)} R\langle t_{zx}\rangle[u]dz\oplus R\langle t_{zx}\rangle[u]dx\xrightarrow{D'_xdx\oplus D'_zdz} R\langle t_{zx}\rangle[u](dz\wedge dx),
        $$
        where $R\langle t_{zx}\rangle[u]$ denote the polynomial in $u$ and divided power in $t_{zx}$, and $|u|=2$, $t_{zx}^{[k]}=2k$, and $|dz|=|dx|=2$, $|dz\wedge dx|=4$, and $D'_z: R\langle t_{zx}\rangle[u]\rightarrow  R\langle t_{zx}\rangle[u]$ is the $R$-linear map sending $t_{zx}^{[k]}u^n$ to $n t_{zx}^{[k]}u^{n-1}+x t_{zx}^{[k-1]}u^n$, and $D'_x: R\langle t_{zx}\rangle[u]\rightarrow  R\langle t_{zx}\rangle[u]$ is the $R[u]$-linear map sending $t_{zx}^{[k]}$ to $pt_{zx}^{[k-1]}$.
    \end{itemize}
    
    %Now, for $l\geq 1$, we set 
    %\begin{align*}
    %    \epsilon_{l} = \sum_{n+k=l,p\lfloor\frac{l}{p}\rfloor\leq n\leq l}(-1)^{n-p\lfloor\frac{l}{p}\rfloor}\frac{x^{n-p\lfloor\frac{l}{p}\rfloor}}{n(n-1)\cdots(p\lfloor\frac{l}{p}\rfloor+1)} u^n\cdot t_{zx}^{[k]};
    %\end{align*}
    %and for $0\leq l'< l$ such that $l'+1$ is not divisible by %$p$, we set 
    %$$
    %\beta_{l,l'}=u^{l'}\cdot t_{zx}^{[l-l']}dz+\sum_{n=l'+1}^{\min\{p(\lfloor \frac{l'}{p}\rfloor+1)-1,l\}}\frac{(-x)^{n-l'}p}{n(n-1)\cdots(l'+1)}u^n\cdot t_{zx}^{[l-n]}dx;
    %$$
    %also, for $1\leq i\leq \lfloor\frac{l}{p}\rfloor$, we set 
    %$$
    %\gamma_{l,i}=iu^{pi-1}\cdot t_{zx}^{[l-pi+1]}dz+\sum_{n=pi}^{\min\{p(i+1)-1,l\}}\frac{(-x)^{n-pi}}{n(n-1)\cdots(pi+1)}u^n\cdot t_{zx}^{[l-n]}dx,
    %$$
    %and finally,
    %$$
    %\gamma_{l,0}=\sum_{n=0}^{\min\{p-1,l\}}\frac{(-x)^{n-pi}}{n(n-1)\cdots(pi+1)}u^n\cdot t_{zx}^{[l-n]}dx.
    %$$
    Moreover, the following statements hold.
    \begin{itemize}
       \item[(1)] We have $E^2_{i,j}=0$ if $i\neq 0,-1,-2$ or $j$ is odd.
    
        \item[(2)] We have $E^2_{0,0}\simeq R$. Let $l\geq 1$, we have 
        $$
        E^2_{0,2l}\simeq \F_p[x].
        $$

        \item[(3)] We have $E^2_{-1,0}\simeq 0$, $E^2_{-1,2}\simeq \F_p[x]$. Let $l\geq 1$ be an integer. The following statements hold.
        \begin{itemize}
            \item[(i)] If $p\nmid l+1$, then $E^2_{-1,2l+2}$ fits into a non-split exact sequence 
            $$
            0\rightarrow \oplus_{1\leq i\leq \lfloor\frac{l}{p}\rfloor}\F_p[x]/(x^{p+1})\rightarrow E^2_{-1,2l}\rightarrow  \oplus_{0\leq i\leq \lfloor\frac{l}{p}\rfloor}\F_p[x]\rightarrow 0.
            $$

            \item[(ii)] If $p|l+1$, then $E^2_{-1,2l+2}$ fits into a non-split exact sequence 
            $$
            0\rightarrow R/(l+1,x^{p+1})\oplus(\oplus_{1\leq i\leq \lfloor\frac{l}{p}\rfloor}\F_p[x]/(x^{p+1}))\rightarrow E^2_{-1,2l}\rightarrow \oplus_{0\leq i\leq \lfloor\frac{l}{p}\rfloor}\F_p[x]\rightarrow 0.
            $$
        \end{itemize}

        \item[(4)] We have $E^2_{-2,0}=E^2_{-2,2}=E^2_{-2,4}=0$. Let $l\geq 1$ be an integer. Then 
        $$
        E^2_{-2,2l+4}\simeq \oplus_{1\leq i\leq \lfloor\frac{l+1}{p}\rfloor}\F_p[x]/(x^p).
        $$
    \end{itemize}
\end{thm}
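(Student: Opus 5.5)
We argue by descent along $A:=\THH(R)^\wedge_p\to \THH(R/\Sphere_p[z,x])$. Since $\Sphere_p[z,x]=\Sphere_p\otimes(\Sigma^\infty_+\N)^{\otimes 2}$, there is a canonical equivalence
$$
\THH(R/\Sphere_p[z,x])\simeq \THH(R)^\wedge_p\otimes_{\THH(\Sphere_p[z,x])}\Sphere_p[z,x].
$$
The cobar (Amitsur) resolution for this base change is the Adams-type resolution governed by $\THH(\Sphere_p[z,x])$ relative to $\Sphere_p[z,x]$. On homotopy, $\pi_*(\Sphere_p[z,x]\otimes_{\THH(\Sphere_p[z,x])}\Sphere_p[z,x])$ is a divided-power/de Rham–type Hopf algebroid. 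The descent spectral sequence then has $E^2$-page given by the de Rham-type complex of the $\pi_*\THH(R/\Sphere_p[z,x])$-comodule: its terms are $\pi_*\THH(R/\Sphere_p[z,x])\otimes\Lambda(dz,dx)$, with $dz,dx$ in degree $2$ (shifts in the $\star$ grading). Because $\THH(\Sphere[z,x])\simeq \Sphere[z,x]\otimes \Sigma^\infty_+ (\text{free loops})$ has Hochschild–Kostant–Rosenberg form, the complex has length $2$. This is the source of (1) in the cohomological direction, i.e. $E^2_{i,j}=0$ for $i\neq 0,-1,-2$. Convergence follows since the resolution is finite after $p$-completion, by the standard HKR/Bökstedt-filtration argument.

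\textbf{Step 1: the relative $\THH$.} Compute $\pi_*\THH(R/\Sphere_p[z,x])$ using Theorem \ref{thm:intro-THH-for-qrsp-rel}(3),(4) with $n=1$, $k=\F_p$, $x_0=z$, $x_1=x$, $d=z-p$ and $f_1=zx$, so that $R=B_1/(d,zx)$. Here $\phi(zx)=z^px^p$. Condition (4)(ii) fails as stated. For (4)(i), $R$ is not $p$-torsion-free, so that condition is unavailable too. Instead I would verify directly, using the $\delta$-structure with $\delta(z)=\delta(x)=0$, that $\phi(f)=(zx)^p$ is monomial with $\delta=0$. Taking $g_{1,1}=(zx)^p$ and $g_{1,0}=0$ then gives (4)(ii). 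This yields $\pi_{2*}\THH(R/\Sphere_p[z,x])\simeq R\langle t_{zx}\rangle[u]$, with $u\leftrightarrow d$ and $t_{zx}\leftrightarrow zx$ in Nygaard degree $1$.

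\textbf{Step 2: the differentials.} Identify the coaction (Gauss–Manin type) operators $D'_z,D'_x$ by computing how $\partial_z,\partial_x$ act on the prismatic generators inside $B_1\{\phi(zx)/\phi(d)\}$, reduced to $\gr_\calN$.

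For $u=[d]$ we have $\partial_z d=1$ and $\partial_x d=0$. Thus $D'_z(u^n)=nu^{n-1}$ and $D'_x u=0$.

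For $t_{zx}^{[k]}=[(zx)^{[k]}]$ we have $\partial_z(zx)=x$ and $\partial_x(zx)=z\equiv p$ modulo $d$, with the unit ambiguity in Theorem \ref{thm:intro-THH-for-qrsp-rel}(4) absorbed. This gives the stated formulas, with Leibniz rule and divided-power compatibility checked on generators.

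Multiplicativity comes from the multiplicative cobar construction. Collapse at $E^2$ then follows for degree reasons: the $E^2$-page is concentrated in three columns and even total $\star$-degree, so the differentials $d^r$ for $r\ge2$ change $\star$ parity. This gives (a) and (b).

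\textbf{Step 3: cohomology of the Koszul complex.} Compute the cohomology of this explicit two-variable complex of graded $R$-modules degree by degree, which gives (2)–(4).

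In grading $2l$, $H^0$ consists of the joint kernel of $D'_z$ and $D'_x$. Solving the recursion $nc_n+xc'_{n}=0$ and $pc=0$ leaves one $\F_p[x]$ for each $l\ge1$. For $l=0$ there is no constraint and one obtains $R$.

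$H^{-2}$ is the cokernel of $D'_x\oplus D'_z$. Since $D'_x$ hits $p\cdot(\text{everything})$, this reduces mod $p$ to a cokernel of $D'_z$ on $\F_p[x]\langle t\rangle[u]$. There $nu^{n-1}$ vanishes exactly when $p\mid n$, leaving the $\F_p[x]/(x^p)$ summands indexed by $1\le i\le\lfloor (l+1)/p\rfloor$.

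$H^{-1}$ is obtained by splitting into the image of a mod-$p$ part and $x$-torsion parts. This explains the extension, the truncations $x^{p+1}$, and the extra summand $R/(l+1,x^{p+1})$ when $p\mid l+1$. Non-splitness is shown by exhibiting an explicit cycle whose $p$-multiple or $x^{p+1}$-multiple is a nonzero boundary.

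\textbf{Main obstacle.} I expect this $H^{-1}$ bookkeeping to be the main obstacle. It requires careful handling of the filtration by powers of $u$ and of the mixed terms $x\,t^{[k-1]}u^n$, which mix neighbouring gradings through the recursion. I would first organize the computation by filtering on the $u$-exponent modulo $p$ and using a spectral sequence for the double complex.
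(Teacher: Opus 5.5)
Your outline matches the paper's route. You descend along $\THH(R)^\wedge_p\to\THH(R/\Sphere_p[z,x])$, identify $E^2$ as a Hopf-algebroid $\Ext$, reduce to a length-two Koszul complex in $D'_z,D'_x$, argue collapse, and finish with explicit linear algebra. Step~1 is fine, since $\delta((zx)^p)=0$ puts you in condition (ii).

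The gap is the step that produces $D'_z,D'_x$. You place the Hopf algebroid on the base, $\pi_*(\Sphere_p[z,x]\otimes_{\THH(\Sphere_p[z,x])}\Sphere_p[z,x])$, and call it divided-power/de Rham type. But it contains $\pi_*\Sphere_p[z,x]$ as a retract (unit followed by multiplication), so it has odd-degree torsion. More importantly, no description of the base determines the coaction on $A=\pi_*\THH(R/\Sphere_p[z,x])$, and that coaction is the entire content of $D'_z,D'_x$. ``Applying $\partial_z,\partial_x$ to the envelope generators'' is not a defined operation.

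The paper makes it one by working $R$-linearly with $\Gamma=\pi_*\THH(R/\Sphere_p[z_0,x_0,z_1,x_1])$. This is computed by a second application of Corollary~\ref{cor:THH-delta-power-Nygaard} over $\Z_p[z_0,x_0,z_1,x_1]$, with $d=z_0-p$ and regular sequence $z_0x_0,\,z_1-z_0,\,x_1-x_0$ (condition (ii) again). It gives $\Gamma=A\langle t_{z_1-z_0},t_{x_1-x_0}\rangle$ with explicit Nygaard-class generators, and $\eta_R$ is induced by $z\mapsto z_1$, $x\mapsto x_1$:
\begin{itemize}
\item[(a)] the class of $z_1-p$ is $u_0+t_{z_1-z_0}$;
\item[(b)] writing $z_1x_1=z_0x_0+x_0(z_1-z_0)+z_0(x_1-x_0)+(z_1-z_0)(x_1-x_0)$ in $\gr^1_\calN$ (last term in $\calN^{\geq 2}$, $z_0\equiv p$, $x_0\equiv x$) gives $\eta_R(t_{zx})=t_{z_0x_0}+x\,t_{z_1-z_0}+p\,t_{x_1-x_0}$;
\item[(c)] this extends to $t_{zx}^{[k]}$ by divided-power compatibility.
\end{itemize}
Your $\partial$-formulas are exactly this first-order expansion, but they need this $\Gamma$ to become provable statements. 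Likewise, the length-two complex does not follow from an HKR shape of $\THH(\Sphere[z,x])$. It comes from primitivity of $t_{z_1-z_0},t_{x_1-x_0}$, which gives the relative injective resolution $0\to A\to\Gamma\to\Gamma\,dz\oplus\Gamma\,dx\to\Gamma\,(dz\wedge dx)\to 0$ of Construction~\ref{construction:resolution}; one then applies $\Hom_\Gamma(A,-)$.

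There are also several smaller points.
\begin{itemize}
\item[(1)] The cosimplicial resolution is not finite. Convergence holds because the fiber of $\THH(R)^\wedge_p\to\THH(R/\Sphere_p[z,x])$ is $0$-connected (the fiber of $\THH(\Sphere_p[z,x])\to\Sphere_p[z,x]$ is $1$-connective), which is what \cite[Proposition 2.14]{MNN17} needs.
\item[(2)] In the collapse argument, $d^r$ for odd $r$ preserves parity. It vanishes only because $r\geq 3$ leaves the three columns.
\item[(3)] For non-splitness you need a lift whose $p$-multiple is \emph{not} a boundary. By Claim~\ref{claim:H1}, for $l\geq p-1$ the class $p\gamma_{l,0}$ is a unit times $x^p$ times a generator of an $x^{p+1}$-truncated summand killed by $p$. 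Hence every lift of $\gamma_{l,0}$ has nonzero $p$-multiple. For $p$ odd and $1\leq l\leq p-2$, however, the submodule is zero and the sequence is trivially split, so that clause cannot be proved as stated (the paper gives no argument for it either).
\item[(4)] For $H^{-1}$ the paper needs no double-complex spectral sequence. It writes an explicit $R$-basis of the kernel ($u^l dz$, $\beta_{l,l'}$, $\gamma_{l,i}$, $x\,t_{zx}^{[l-pi+1]}u^{pi-1}dz$). It then passes to the submodule with $\gamma_{l,i}$ replaced by $p\gamma_{l,i}$ (quotient $\oplus_i\F_p[x]\gamma_{l,i}$), and computes that submodule modulo the image from the solvability conditions in Claim~\ref{claim:im1}.
\end{itemize}
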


\begin{rem}\label{rem:intro-computation}
    The extension problem for the spectral sequence in Theorem \ref{thm:intro-computation} remains unsolved, nor has the multiplicative structure been specified. However, in the forthcoming work \cite{GY26} with Wei Yang, we will resolve the extension problem and specify the multiplicative structure by choosing a suitable DGA model of $\THH(\Z_p[x]/(px))^\wedge_p$. 
\end{rem}

\subsection{Main Idea}\label{subsection:main-idea}

By considering base change along 
$$
\Sphere_{W(k)}[x_0,x_1,\ldots,x_n]\rightarrow \Sphere_{W(k)}[x_0^{1/p^\infty},x_1^{1/p^\infty},\ldots,x_n^{1/p^\infty}],
$$ 
and noting the fact that the relative prismatic cohomology data satisfies base change (see, for example \cite[Theorem 1.2 (7)]{AKN23}), Theorem \ref{thm:intro-THH-for-qrsp-rel} is demonstrated by descent from the following result:
\begin{thm}[{\cite[$\S$ 7.2]{BMS19}}, {\cite[$\S$ 13]{BS22}}]\label{thm:intro-THH-for-qrsp}
    Let $S$ be a quasiregular semiperfectoid ring, equipped with a ring homomorphism $R\rightarrow S$ where $R$ is a perfectoid ring with the Fontaine map $\theta:W(R^\flat)\rightarrow R$, and regard $(W(R^\flat),\ker\theta)$ as a prism. %Let $\calN^{\geq \star}\widehat{\Prism}^{(1)}_{S/W(R^\flat)}\{i\}$, $i\in \Z$, denote the Nygaard filtration on the Nygaard-completed, Frobenius twisted, $i$-th Breuil-Kisin twisted prismatic cohomology $\widehat{\Prism}_{S/W(R^\flat)}^{(1)}\{i\}$, and let $\gr_\calN^*\widehat{\Prism}_{S/W(R^\flat)}^{(1)}\{i\}$ denote the associated graded. Also, we regard $S$ naturally as an $\Sphere_{W(R^\flat)}$-algebra. 
    Then the following statements hold.
    \begin{itemize}
        \item[(1)] The homotopy groups of $\THH(S/\Sphere_{W(R^\flat)})$ are concentrated in even degrees, and there is a canonical isomorphism of graded rings 
        $$
        \pi_{2*} \THH(S/\Sphere_{W(R^\flat)})\simeq \gr^*_\calN\widehat{\Prism}^{(1)}_{S/W(R^\flat)}.
        $$

        \item[(2)] The homotopy groups of $\TC^-(S/\Sphere_{W(R^\flat)})$ are concentrated in even degrees, and there is a canonical isomorphism of graded rings 
        $$
        \pi_{2*}\TC^-(S/\Sphere_{W(R^\flat)})\simeq \bigoplus_{*\in \Z\ \text{graded}}\calN^{\geq *}\widehat{\Prism}^{(1)}_{S/W(R^\flat)}\{*\}.
        $$

    \end{itemize}
\end{thm}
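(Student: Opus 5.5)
This is the theorem of \cite[\S 7.2]{BMS19} and \cite[\S 13]{BS22}, so the plan is to reconstruct their argument and reduce to known inputs. For a perfectoid $R$, the relative term $\THH(S/\Sphere_{W(R^\flat)})$ agrees with $\THH(S;\Z_p)$ after $p$-completion. Indeed, $\THH(\Sphere_{W(R^\flat)})^\wedge_p \simeq \Sphere_{W(R^\flat)}{}^\wedge_p$, because $W(R^\flat)$ is $p$-completely étale over $\Z_p$ in the sense that $L_{R^\flat/\F_p}=0$, and the spherical Witt vectors inherit this. So we may work with absolute $p$-completed $\THH$, $\TC^-$ and $\TP$, and we must identify these with the prismatic Nygaard data of the initial prism $(W(R^\flat),\ker\theta)$ of $S$, which exists because $S$ is quasiregular semiperfectoid.

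The first step is evenness. Choose a perfectoid $R'$ mapping onto $S$; for example, use the quasisyntomic cover by a perfectoid. Bökstedt periodicity for perfectoids gives $\pi_*\THH(R';\Z_p)=R'[u]$ with $|u|=2$. For quasiregular semiperfectoid $S$, filter $\THH(S;\Z_p)$ via the relative term $\THH(S/R')$-type comparison, or more directly by the HKR filtration on $\HH(S/R')$. Since $L_{S/R'}[-1]$ is $p$-completely flat, the graded pieces are $\Gamma^i_S(L_{S/R'}[-1])[2i]$, concentrated in even degrees. Evenness then propagates to $\THH(S;\Z_p)$ by the standard spectral sequence $\THH(R';\Z_p)\otimes\HH(S/R')$. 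The homotopy fixed point and Tate spectral sequences for $\T$ then degenerate for parity reasons, so $\TC^-$ and $\TP$ are also even.

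The second step is identification. Let $\widehat{\Prism}_S:=\pi_0\TC^-(S;\Z_p)$, and use the even filtration $\pi_{2i}\TC^-\subset\pi_{2i}\TP$. Following \cite[\S 7--8]{BMS19}, $\pi_0\TP(S;\Z_p)$ is the $p$-completed Nygaard completion of $\Prism_S$. This is proved by comparing with $\pi_0\TP(S\otimes\F_p)\simeq\widehat{L\Omega}_{S/\F_p}$-crystalline cohomology via \cite[Theorem 8.17]{BMS19}. The prism structure comes from the fact that $\pi_0\TC^-(S)\to\pi_0\THH(S)=S$ is surjective with kernel generated by a distinguished element in the perfectoid case, and this propagates by base change. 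The Frobenius $\varphi_p^{h\T}$ supplies the $\delta$-structure.

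With this in place, \cite[Theorem 13.1, 15.x]{BS22} identifies $\pi_0\TC^-(S;\Z_p)$ with $\widehat{\Prism}^{(1)}_S$. It also identifies the filtration by the images of $\pi_{2i}\TC^-\to\pi_0\TP$ (after untwisting by $\{i\}$) with the Nygaard filtration $\calN^{\geq i}$. This gives (2), where the Breuil--Kisin twist $\{*\}$ records the $u$-, $v$-periodicity generators. Statement (1) then follows from the cofiber sequence $\Sigma^{2}\TC^-\xrightarrow{v}\TC^-\to\THH$, which on even homotopy gives $\pi_{2i}\THH=\calN^{\geq i}/\calN^{\geq i+1}=\gr^i_\calN$. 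Multiplicativity holds because every map involved is a ring map.

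The main obstacle is showing that the abstractly defined filtration on $\pi_0\TC^-$ coincides with the prismatic Nygaard filtration, rather than merely identifying the rings. My first approach would be the route through \cite{BS22}: show that both sides are quasisyntomic sheaves, reduce to $S$ obtained by quotienting a perfectoid by a regular sequence, and check the equality explicitly on prismatic envelopes. There $\calN^{\geq i}$ is computed by divided powers of $\phi(f_j)/\phi(d)$, and this can be matched with the HKR filtration.
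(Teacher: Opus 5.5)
You have the $\THH$ side exactly as the paper does it. You compare $\THH(S/\Sphere_{W(R^\flat)})$ with $\THH(S;\Z_p)$ via $\THH(\Sphere_{W(R^\flat)})^\wedge_p\simeq \Sphere_{W(R^\flat)}$, then invoke \cite[\S 7]{BMS19} and \cite[Theorem 13.1]{BS22}. Your re-derivation of evenness is fine but unnecessary. Since you only claim agreement ``after $p$-completion'', you also need that the relative $\THH$ is already $p$-complete; the paper takes this from \cite[Lemma 3.6]{KN22}.

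The gap is on the prismatic side. The cited results describe $\pi_*\THH(S;\Z_p)$ and $\pi_*\TC^-(S;\Z_p)$ through the \emph{absolute, untwisted} data $\calN^{\geq\star}\widehat{\Prism}_S\{*\}$. The statement concerns the \emph{relative, Frobenius-twisted} data $\calN^{\geq\star}\widehat{\Prism}^{(1)}_{S/W(R^\flat)}\{*\}$, and you never connect the two. Where you gesture at the connection, the claims are incorrect:
\begin{itemize}
\item $(W(R^\flat),\ker\theta)$ is not ``the initial prism of $S$''. The initial object of the prismatic site of $S$ is $(\Prism_S, I\Prism_S)$, which is typically not perfect.
\item \cite[Theorem 13.1]{BS22} identifies $\pi_0\TC^-(S;\Z_p)$ with $\widehat{\Prism}_S$, not with an absolute ``$\widehat{\Prism}^{(1)}_S$''.
\end{itemize}

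The missing step is essentially the whole content of the paper's proof. Put $(A,I)=(W(R^\flat),\ker\theta)$.
\begin{itemize}
\item Because this prism is perfect, every prism $(B,J)$ with a map $S\to B/J$ receives a unique map of prisms from $(A,I)$ lifting $A/I=R\to S\to B/J$. Hence the absolute and relative prismatic sites of $S$ agree, and $\Prism_S\simeq\Prism_{S/A}$.
\item Because $\phi_A$ is an automorphism, $x\mapsto x\otimes 1$ is a ring isomorphism $\Prism_{S/A}\xrightarrow{\sim}\Prism^{(1)}_{S/A}=\Prism_{S/A}\widehat{\otimes}^L_{A,\phi}A$. Under it $\phi_{/A}(x\otimes 1)=\phi(x)$, so the relative Nygaard filtration of \cite[\S 15]{BS22} matches the absolute one of \cite[\S 12]{BS22}. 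This is compatible with Breuil--Kisin twists and with Nygaard completion.
\end{itemize}
Only with this identification can the cited theorems be read as statements (1) and (2).

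Separately, your sketch of how the comparison in \cite{BS22} is proved is inaccurate. None of the following is load-bearing once you cite \cite[Theorem 13.1]{BS22}.
\begin{itemize}
\item The crystalline comparison of \cite[\S 8]{BMS19} concerns $\F_p$-algebras and does not compute $\widehat{\Prism}_S$.
\item The prism ideal is not the kernel $\calN^{\geq 1}$ of $\pi_0\TC^-(S;\Z_p)\to S$, which in general is not generated by a distinguished element.
\item Since $|v|=-2$, the cofiber sequence is $\Sigma^{-2}\TC^-\xrightarrow{v}\TC^-\to\THH$, not with $\Sigma^{2}$.
\end{itemize}
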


\begin{rem}\label{rem:intro-LW}
    In the situation of Theorem \ref{thm:intro-THH-for-qrsp-rel} (3) with the further assumption that $R$ is $p$-torsion-free, one might also analyze $\TC^-(R/\B_n)$ directly as in \cite[$\S$ 3]{LW22}.
\end{rem}

In \cite[$\S$ 4, $\S$ 5]{LW22}, $\THH(\calO_K)^\wedge_p$ (where $\calO_K$ is the ring of integers of a number field $K$) is computed by descent along $\THH(\calO_K)^\wedge_p\rightarrow \THH(\calO_K/\Sphere_p[z])$ (where $k$ is the residue field of $K$ and $z$ is sent to a chosen uniformizer $\varpi\in \calO_K$). We follow the same strategy to compute $\THH(R)^\wedge_p$ where $R=\Z_p[x]/(px)$. The spectral sequence in Theorem \ref{thm:intro-computation} converging to $\pi_*\THH(R)^\wedge_p$ is precisely induced by the coskeleton filtration of $\THH(R/\Sphere_p[z,x]^{\otimes_{\Sphere_p}\bullet+1})$: by \cite[Proposition 2.14]{MNN17}, we have an equivalence    
    $$
    \THH(R)^\wedge_p\rightarrow \varprojlim_{\bullet\in \Delta}\THH(R/\Sphere_p[z,x]^{\otimes_{\Sphere_p}\bullet+1}).
    $$
The $E^2$-page of this spectral sequence can be computed as cohomology of the following Hopf algebroids (see Lemma \ref{lem:Hopf-algebroid})
        \begin{align*}
        \ & (\pi_*\THH(R/\Sphere_p[z,x]),\pi_*\THH(R/\Sphere_p[z_0,x_0,z_1,x_1]))\\
        & \simeq (R\langle t_{zx}\rangle[u],R\langle t_{z_1-z_0},t_{x_1-x_0},t_{z_0x_0}\rangle[u_0]).
        \end{align*}
Choosing a suitable resolution by relative injectives, we obtain results in Theorem \ref{thm:intro-computation}. Finally, we note that, we are essentially computing the spectral sequence associated to the motivic filtration for $\THH(R)^\wedge_p$ following the idea of \cite[Proposition 7.13]{BMS19}; in other words, we are actually computing 
$$
\gr^*_\calN \widehat{\Prism}_{R}\simeq \varprojlim_{\bullet\in \Delta}\gr^*_\calN\widehat{\Prism}^{(1)}_{R/\Z_p[z,x]^{\otimes_{\Z_p}\bullet+1}}.
$$
More precisely, one might consider the filtration $\varprojlim_{\bullet\in \Delta}\tau_{\geq \star}\THH(R/\Sphere_p[z,x]^{\otimes_{\Sphere_p}\bullet+1})$ on $\THH(R)^\wedge_p$ and resort to \cite[Proposition 7.13]{BMS19}, \cite[Theorem 1.2 (6)]{AKN23}, and \cite[Theorem 2.3.4]{ABBK26}. 

\begin{rem}\label{rem:intro-motivic-pullback}
    In \cite{BL25}, the algebraic $\mathrm{K}$-theory of $\Z[x]/(px)$ is studied using the theory of motivic pullback squares developed in \cite{LT23}. Similar method might be applied to $\THH(\Z[x]/(px))$. 
\end{rem}

\begin{rem}\label{rem:intro-KN}
    One might also compute $\pi_*\THH(\Z_p[x]/(px))^\wedge_p$ by applying \cite[Proposition 7.1]{KN22} to $\THH((\Z_p[x]/(px))/\Sphere_p[z,x])$: there is a multiplicative convergent spectral sequence 
    $$
    \pi_i(\THH((\Z_p[x]/(px))/\Sphere_p[z,x])\widehat{\otimes}_{\Z_p[z,x]}\pi_j\HH(\Z_p[z,x]/\Z_p)^\wedge_p)\Rightarrow \pi_{i+j}\THH((\Z_p[x]/(px))^\wedge_p.
    $$
\end{rem}

\subsection*{Acknowledgements}
The author is very grateful to Guozhen Wang for suggesting this investigation. A special thanks to Wei Yang for enlightening conversations on various methods to compute $\THH(\Z_p[x]/(px))^\wedge_p$. The author also thanks Hui Gao and Yupeng Wang for valuable discussions.

%%%%%%%%%%%%%%%%%%%%%%%%%%%%%%%%%%%%%%%%%%%%%%%%%%%%%%%%%%%%%%%%%%%%%%%%%%%%%%%%%%%%%%%
\section{A Reminder on Relative \texorpdfstring{$\THH$}{THH}}\label{section:reminder-on-relative-THH}

In this section we briefly review the basic definitions concerning relative topological Hochschild homology. Our main references are \cite[$\S$ 11]{BMS19}, \cite[$\S$ 2]{LW22}, and \cite{NS18}. We consider only $\THH$ for $\Einfty$-rings. 

\begin{defn}[Relative Topological Hochschild Homology]\label{defn:relative-THH}
    Let $E$ be an $\Einfty$-ring spectrum. The {\it topological Hochschild homology relative to $E$} is the following functor 
    \begin{align*}
    \THH(-/E):  \CAlg_E&\rightarrow \CAlg_E^{B\T}  \\
                A  & \mapsto \THH(A/E):=A^{\otimes_{E}\T}=A\otimes_{A\otimes_E A} A, 
    \end{align*}
    where $\CAlg_E$ is the $\infty$-category of $\Einfty$-$E$-algebras, and $\T=\{z\in \C;|z|=1\}$ is regarded as a multiplicative group, and $\CAlg_E^{B\T}=\Fun(B\T,\CAlg_E)$ is the $\infty$-category of $\T$-equivariant $\Einfty$-$E$-algebras.

    We define the {\it topological negative cyclic homology relative to $E$} as the following functor:
    $$
    \TC^-(-/E):=\THH(-/E)^{h\T}:\CAlg_E\rightarrow \CAlg_E,
    $$
    where $(-)^{h\T}$ is the homotopy fixed points with respect to the $\T$-action; also we define the {\it topological periodic cyclic homology relative to $E$} as the following functor:
    $$
    \TP(-/E):=\THH(-/E)^{t\T}:\CAlg_E\rightarrow \CAlg_E,
    $$
    where $(-)^{t\T}$ is the Tate construction with respect to the $\T$-action.

    When $E=\Sphere$ is the sphere spectrum, we just write $\THH(-)$, $\TC^-(-)$ and $\TP(-)$.
\end{defn}

\begin{rem}\label{rem:THH}
    By definition $\THH(-/E):\CAlg_E\rightarrow \CAlg_E^{B\T}$ is left adjoint to the forgetful functor $\CAlg_E^{B\T}\rightarrow \CAlg_E$; in other words, for an $\Einfty$-$E$-algebra $A$, there is a non-equivariant $\Einfty$-$E$-algebra map $A\rightarrow \THH(A/E)$ which is universal among non-equivariant $\Einfty$-$E$-algebra maps from $A$ to $\Einfty$-$E$-algebras with $\T$-actions. 
    
    Note that $\THH(-/-)$ is multiplicative in the sense that 
    $$
    \THH(A_1/E_1)\otimes_{\THH(A_2/E_2)}\THH(A_3/E_3)\simeq \THH(A_1\otimes_{A_2}A_3/E_1\otimes_{E_2}E_3).
    $$
    
    Also note that, for an $\Einfty$-$E$-algebra $A$, there is the $\T$-equivariant augmentation map 
    $$
    \THH(A/E)\rightarrow \THH(A/A)=A,
    $$
    where the target is $A$ equipped with trivial $\T$-action.
\end{rem}

Let $\Sphere$ denote the sphere spectrum, and $A$ an $\Einfty$-ring. It is known that $\THH(A):=\THH(A/\Sphere)$ admits not just a $\T$-action: it admits the structure of a cyclotomic spectrum. 

\begin{defn}[Cyclotomic Spectra, {\cite[Definition II.1.1]{NS18}}]\label{defn:cyclotomic-spectra}
    A {\it cyclotomic spectrum} is a spectrum $X$ with $\T$-action together with $\T$-equivariant maps $\varphi_p:X\rightarrow X^{tC_p}$ for every prime $p$; we refer to $\varphi_p$ as the {\it cyclotomic Frobenius of $X$ at the prime $p$}. Let $\CycSp$ denote the {\it $\infty$-category of cyclotomic spectra}. See \cite[Definition II.1.6]{NS18} for more details. 

    For a fixed prime $p$, a {\it $p$-cyclotomic} spectrum is a spectrum $X$ with $\T$-action and a $\T$-equivariant map $\varphi_p:X\rightarrow X^{tC_p}$; we refer to $\varphi_p$ as the {\it cyclotomic Frobenius of $X$}.
    Let $\CycSp_p$ denote the {\it $\infty$-category of $p$-cyclotomic spectra}.

    Note that in \cite[Definition II.1.1]{NS18}, a $p$-cyclotomic spectrum is defined to be a spectrum $X$ with $\C_{p^\infty}$-action and a $C_{p^\infty}$-equivariant map $\varphi_p:X\rightarrow X^{tC_p}$. However, in this note the spectrum $X$ in consideration will come from $\THH$ and be $p$-complete and bounded below, and by \cite[Remark II.1.3]{NS18}  there is no essential difference between $\T$ and $C_{p^\infty}$ for such an $X$. 
\end{defn}

\begin{prop}\label{prop:THH-as-cyclotomic-spectrum}
    For an $\Einfty$-ring $A$, $\THH(A)=\THH(A/\Sphere)$ is a cyclotomic spectrum.
\end{prop}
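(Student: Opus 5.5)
The plan is to construct the cyclotomic structure on $\THH(A)$ through the Tate diagonal, as in \cite[\S III]{NS18}. First, using the cyclic (Hochschild) description $\THH(A)\simeq A^{\otimes\T}$, we equip $\THH(A)$ with its $\T$-action. This is the $\T$-action already recorded in Definition \ref{defn:relative-THH} for $E=\Sphere$.

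Next, fix a prime $p$. The key input is the Tate diagonal $\Delta_p:X\rightarrow (X^{\otimes p})^{tC_p}$. This is a natural transformation of lax symmetric monoidal functors $\mathrm{Sp}\rightarrow\mathrm{Sp}$, and it is unique by \cite[Theorem III.1.7]{NS18}, since the target is exact and $\mathrm{id}$ is the unit. Applying it to the $\Einfty$-ring $A$ gives a map of $\Einfty$-rings $A\rightarrow (A^{\otimes p})^{tC_p}$. We then use the equivalence $A^{\otimes\T}\simeq (A^{\otimes p})^{\otimes \T/C_p}$, together with the universal property of Remark \ref{rem:THH}, which says $\THH(A)$ is left adjoint to forgetting the $\T$-action. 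With these, the composite $A\rightarrow (A^{\otimes p})^{tC_p}\rightarrow \THH(A)^{tC_p}$ extends uniquely to a $\T\simeq\T/C_p$-equivariant $\Einfty$-map $\varphi_p:\THH(A)\rightarrow\THH(A)^{tC_p}$. Here $\THH(A)^{tC_p}$ carries its residual $\T/C_p$-action, and we identify it with a $\T$-action via the $p$-th root isomorphism. The middle arrow comes from the $C_p$-equivariant inclusion of $A^{\otimes p}$ as the fiber over $1\in\T/C_p$, followed by lax monoidality of $(-)^{tC_p}$.

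Doing this for every $p$ gives the data of Definition \ref{defn:cyclotomic-spectra}. The step needing the most care is the equivariance bookkeeping in the second step: checking that the $\T/C_p$-action on $\THH(A)^{tC_p}$ is compatible with the adjunction, so that $\varphi_p$ is genuinely $\T$-equivariant. I would handle this exactly as in \cite[Construction IV.2.1]{NS18}, where $\Einfty$-structures are used to reduce everything to the universal property. Alternatively, since the statement is standard, one can simply cite \cite[\S IV.2]{NS18} for the whole construction.
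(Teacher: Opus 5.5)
Your proposal is correct and is essentially the paper's argument: the paper just cites \cite[Corollary IV.2.3]{NS18}, and what you sketch is precisely the $\Einfty$ construction behind that citation. That construction takes the Tate diagonal $A\rightarrow (A^{\otimes p})^{tC_p}$, composes with $(-)^{tC_p}$ of the $C_p$-equivariant map $A^{\otimes p}\rightarrow \THH(A)$, and uses the universal property of $\THH(A)=A^{\otimes\T}$ to extend to a $\T$-equivariant $\varphi_p$. One small point: the equivalence $A^{\otimes\T}\simeq (A^{\otimes p})^{\otimes \T/C_p}$ you invoke is not needed, since the universal property of $A^{\otimes p}=A^{\otimes C_p}$ among $C_p$-equivariant $\Einfty$-rings already supplies the middle arrow.
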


\begin{proof}
    See, for example \cite[Corollary IV.2.3]{NS18}.
\end{proof}

For a general $\Einfty$-ring $E$, $\THH(-/E)$ might admit no nontrivial cyclotomic structure: for example, the proof of \cite[Corllary IV.2.3]{NS18} can not be modified for $\HH(-):=\THH(-/\Z)$ since there is no non-trivial Tate diagonal for $\rmD(\Z)$, see \cite[Theorem III.1.10]{NS18}. Still we have some conditions on $E$ to guarantee that $\THH(-/E)$ can be enhanced to a functor $\CAlg_E\rightarrow \CycSp_E$.

\begin{defn}[Cyclotomic bases, {\cite[Lemma 2.3]{LW22}}]\label{defn:cyclotomic-base}
    A {\it cyclotomic base} is an $\Einfty$-ring object $E$ in cyclotomic spectra such that its underlying $\T$-action is trivial, together with a lift of the augmentation map $\THH(E)\rightarrow E$ to a map of $\Einfty$-rings in cyclotomic spectra.

    Let $E_1,E_2$ be cyclotomic bases. A {\it morphism from $E_1$ to $E_2$} is a commutative diagram of $\Einfty$-rings in cyclotomic spectra:
        $$
        \xymatrix@R=50pt@C=50pt{
        \THH(E_1) \ar[r] \ar[d] & E_1\ar[d] \\
        \THH(E_2) \ar[r]        & E_2.
        }
        $$
\end{defn}

%By \cite[Proposition 11.3]{BMS19}, $\Sphere[z]:=\Sigma^\infty_+\N$ admits an $\Einfty$-cyclotomic structure with trivial $\T$-action, and also the augmentation $\THH(\Sphere[z])\rightarrow \Sphere[z]$ admits a lift to a map of $\Einfty$-algebra in cyclotomic spectra. In this article, we would consider a cyclotomic base of the form $\Sphere_{W(k)}[x_0,x_1,\ldots,x_n]$. We fix a prime number $p$.

\begin{lem}\label{lem:spherical-perfect}
    Let $p$ be a prime number, and let $E$ be a $p$-complete connective $\Einfty$-ring such that $E\otimes_\Sphere \F_p$ is a perfect ring of characteristic $p$ concentrated in degree $0$. Then the augmentation map $\THH(E)\rightarrow E$ induces an equivalence $\THH(E)^\wedge_p\xrightarrow{\sim}E$, and $E$ is naturally a cyclotomic base.    
\end{lem}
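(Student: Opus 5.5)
The plan has two parts: first prove the equivalence $\THH(E)^\wedge_p\simeq E$, then build the cyclotomic base structure on $E$ out of it.

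\emph{Step 1: reduce the equivalence to a mod $p$ statement.} Both $\THH(E)^\wedge_p$ and $E$ are $p$-complete. $\THH(E)$ is connective because $E$ is connective. So it suffices to show that the augmentation becomes an equivalence after applying $-\otimes_\Sphere\F_p$. Write $k:=E\otimes_\Sphere\F_p$, a perfect $\F_p$-algebra in degree $0$. By multiplicativity of $\THH$ (Remark \ref{rem:THH}),
\[
\THH(E)\otimes_\Sphere \F_p\simeq \THH(E)\otimes_{E}(E\otimes_\Sphere\F_p)\simeq \THH(E\otimes_\Sphere\F_p/\F_p)\otimes_{\THH(\F_p)}\F_p \quad\text{(up to reorganizing)}.
\]
A cleaner route avoids the problem that $\THH(\F_p)$ is not $\F_p$. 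Consider the cofiber $C$ of $\THH(E)\to E$. Mod $p$ this cofiber is computed as a relative object over $E$: it is the base change along $E\to k$ of the cofiber of $E\to E\otimes_{E\otimes E}E$. On the mod $p$ level the relevant object is $k\otimes_{k\otimes_{\F_p}k}k=\HH(k/\F_p)$.

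\emph{Step 2: the key input, $L_{k/\F_p}\simeq 0$.} Since $k$ is perfect, the cotangent complex $L_{k/\F_p}$ vanishes, because Frobenius acts both as an isomorphism and as zero on it. The HKR filtration then gives $\HH(k/\F_p)\simeq k$.

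\emph{Step 3: deduce $E\otimes_\Sphere E\to E$ is an equivalence mod $p$.} I would show that $(E\otimes_\Sphere E)^\wedge_p\to E$ is an equivalence. Its mod $p$ reduction is $k\otimes_{\F_p}k\to k$, wait, more precisely $E\otimes_\Sphere E\otimes_\Sphere\F_p\simeq k\otimes_{\F_p}(E\otimes_\Sphere\F_p)$. This is not obviously $k$.

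So instead I would run the argument by connectivity induction on the cofiber, using the cotangent complex $L_{E/\Sphere}$. Its mod $p$ base change is $L_{k/\F_p}\otimes$-related, and vanishes by Step 2 together with the base-change formula $L_{E/\Sphere}\otimes_E k\simeq L_{k/\F_p\otimes_\Sphere\cdots}$. Thus $(L_{E/\Sphere})^\wedge_p=0$, i.e.\ $E$ is $p$-adically formally étale over $\Sphere$. The standard argument (filtering $\THH$ via the Postnikov tower, where the graded pieces are governed by $L$, or equivalently the topological HKR/connectivity estimate) then shows $\THH(E)^\wedge_p\to E$ is an equivalence. This is the step I expect to be the main obstacle: transferring vanishing of the ($p$-completed) cotangent complex to $\THH$ requires a connectivity argument on the first nonvanishing homotopy of the cofiber, which is identified with a piece of $\Sigma L_{E/\Sphere}$.

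\emph{Step 4: cyclotomic base structure.} Put the trivial $\T$-action on $E$. Because $\THH(E)^\wedge_p\simeq E$ and $E$ is $p$-complete and bounded below, the cyclotomic structure of $\THH(E)$ (Proposition \ref{prop:THH-as-cyclotomic-spectrum}), which after $p$-completion is only $p$-cyclotomic, transports to $E$. The Frobenius is then $E\to E^{tC_p}$, the composite $\THH(E)^\wedge_p\to\THH(E)^{tC_p}$. The augmentation is then an equivalence of $\Einfty$-rings in $p$-complete cyclotomic spectra. The induced $\T$-action on $E$ agrees with the trivial one because the augmentation $\THH(E)\to E$ is $\T$-equivariant for the trivial action on the target (Remark \ref{rem:THH}). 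Hence it lifts canonically, which gives Definition \ref{defn:cyclotomic-base}. Naturality follows from functoriality of $\THH$.
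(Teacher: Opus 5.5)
Your Steps 1 and 2 are exactly the paper's proof, and they already finish the argument. The problem is that you do not recognize this. You instead rest the conclusion on Step 3, which has a genuine gap.

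The identification you need is just base change for relative $\THH$. The functor $-\otimes_\Sphere\F_p$ is symmetric monoidal and preserves relative tensor products, and $(E\otimes_\Sphere E)\otimes_\Sphere\F_p\simeq k\otimes_{\F_p}k$, so
\[
\THH(E)\otimes_\Sphere\F_p\simeq k\otimes_{k\otimes_{\F_p}k}k=\THH(k/\F_p),
\]
compatibly with the augmentations. Since $k\otimes_{\F_p}k$ is discrete, this is classical $\HH(k/\F_p)$, which is $k$ by HKR and $L_{k/\F_p}\simeq 0$. Connectivity of $C=\cofib(\THH(E)\to E)$ upgrades $C\otimes_\Sphere\F_p\simeq 0$ to $C/p\simeq 0$, and hence $\THH(E)^\wedge_p\simeq E$. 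Neither $\THH(\F_p)$ nor the absolute $\THH(k)$ ever enters. So the expression with $\otimes_{\THH(\F_p)}\F_p$ in your Step 1 is wrong, and the worry behind it is a red herring.

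Step 3 should be discarded, for three reasons.
\begin{itemize}
\item[(a)] Its first goal is false. The map $(E\otimes_\Sphere E)^\wedge_p\to E$ reduces mod $p$ to the multiplication $k\otimes_{\F_p}k\to k$, and for $k=\F_{p^2}$ this is $k\times k\to k$.
\item[(b)] Its fallback is unjustified. Base change identifies $L_{E/\Sphere}\otimes_E k$ with the $\Einfty$ (topological Andr\'e--Quillen) cotangent complex of $k$ over $\F_p$. That is not the algebraic $L_{k/\F_p}$ of Step 2, and the two differ in general. So ``vanishes by Step 2'' is a non sequitur, and the Frobenius argument does not apply as stated.
\item[(c)] The repair is circular. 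The $\Einfty$ cotangent complex does vanish for perfect $k$, but the natural proof reads this off from $\THH(k/\F_p)\simeq k$: its suspension agrees with $\cofib(k\to\THH(k/\F_p))$ in the lowest nonvanishing degree. That is exactly what Steps 1--2 already give, so Step 3 is redundant as well.
\end{itemize}

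Step 4 is fine, and in fact supplies an argument the paper omits. To complete it, note that $E^{tC_\ell}\simeq 0$ for $\ell\neq p$, because $E$ is $p$-complete and bounded below. Hence a $p$-cyclotomic structure on $E$ is the same as a cyclotomic one. The $\T$ versus $C_{p^\infty}$ point is handled by the remark recorded in Definition \ref{defn:cyclotomic-spectra}.
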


\begin{proof}
    It suffices to check that $\THH(E)\otimes_\Sphere \Sphere/p\rightarrow E\otimes_{\Sphere}\Sphere/p$ is an equivalence, and since everything is connective, it then suffices to check that $\THH(E)\otimes_{\Sphere}\F_p\simeq \THH(E\otimes_\Sphere \F_p/\F_p)\rightarrow E\otimes_\Sphere \F_p$ is an equivalence. By assumption $E\otimes_{\Sphere}\F_p$ is a perfect ring of characteristic $p$ concentrated in degree $0$, and it suffices to resort to the HKR-filtration (see, for example the last paragraph of \cite[$\S$ 2.2]{BMS19}) on $\THH(E\otimes_\Sphere \F_p/\F_p)$ with the observation that $L\Omega^1_{E\otimes_\Sphere \F_p/\F_p}\simeq 0$.
\end{proof}

\begin{cor}\label{cor:spherical-perfect}
    Let $p$ be a prime number, and let $E$ be a $p$-complete connective $\Einfty$-ring such that $E\otimes_\Sphere \F_p$ is a perfect ring of characteristic $p$ concentrated in degree $0$, and let $R$ be an $\Einfty$-$E$-algebra. Then the canonical map 
    $$
    \THH(R)^\wedge_p\rightarrow \THH(R/E)^\wedge_p
    $$
    is an equivalence. 
\end{cor}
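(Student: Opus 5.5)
We prove Corollary \ref{cor:spherical-perfect} by base change. Using the multiplicativity of relative $\THH$ from Remark \ref{rem:THH}, write $R\simeq R\otimes_E E$ and $\Sphere\simeq \Sphere\otimes_{\Sphere}\Sphere$. This gives an equivalence of $\Einfty$-rings
$$
\THH(R/E)\simeq \THH(R)\otimes_{\THH(E)} E .
$$
Here $\THH(E)\to E$ is the augmentation, and the structure map $\THH(R)\to\THH(R/E)$ is the canonical map in the statement. Concretely, $\THH(R/E)=R^{\otimes_E\T}$, and $R^{\otimes_E \T}\simeq R^{\otimes \T}\otimes_{E^{\otimes\T}}E$ because tensoring over $\T$ commutes with relative tensor products of $\Einfty$-rings.

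Next we $p$-complete. Since $-\otimes_\Sphere \Sphere/p$ commutes with relative tensor products, we get
$$
\THH(R/E)\otimes_\Sphere\Sphere/p \simeq \THH(R)\otimes_{\THH(E)}(E\otimes_\Sphere \Sphere/p).
$$
By Lemma \ref{lem:spherical-perfect}, the map $\THH(E)\otimes_\Sphere\Sphere/p\to E\otimes_\Sphere\Sphere/p$ is an equivalence. Indeed, its proof shows exactly this mod $p$ statement, which is stronger than the statement after completion. Hence
$$
E\otimes_\Sphere \Sphere/p\simeq \THH(E)\otimes_\Sphere\Sphere/p \quad\text{as } \THH(E)\text{-modules,}
$$
and therefore
$$
\THH(R/E)\otimes_\Sphere\Sphere/p\simeq \THH(R)\otimes_{\THH(E)}\THH(E)\otimes_\Sphere\Sphere/p\simeq \THH(R)\otimes_\Sphere\Sphere/p .
$$
We must check that this composite equivalence is the one induced by the canonical map $\THH(R)\to\THH(R/E)$. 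It is, because the map is obtained by applying $\THH(R)\otimes_{\THH(E)}(-)\otimes_\Sphere\Sphere/p$ to the augmentation $\THH(E)\to E$.

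A map that is an equivalence after $-\otimes_\Sphere\Sphere/p$ is an equivalence after $p$-completion. This proves the corollary. No connectivity assumption on $R$ is needed, since $p$-completion is detected mod $p$ for arbitrary spectra.

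The only point requiring care is that the base change identification $\THH(R/E)\simeq\THH(R)\otimes_{\THH(E)}E$ is compatible with the canonical map. This follows from the functoriality of $\THH$ in the pair $(R,E)$ and the formula in Remark \ref{rem:THH}. The rest is formal.
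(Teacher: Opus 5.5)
Your proof is correct and is the paper's argument. The paper also writes $\THH(R/E)\simeq\THH(R)\otimes_{\THH(E)}E$ and concludes from Lemma~\ref{lem:spherical-perfect}; your mod-$p$ bookkeeping just spells out the paper's ``immediately''.

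Two cosmetic slips. First, in the multiplicativity formula the three bases should be $\Sphere,\Sphere,E$, with third factor $\THH(E/E)=E$, not $\Sphere\otimes_\Sphere\Sphere$. Second, since $E$ is $p$-complete, the mod-$p$ form of the lemma is equivalent to its stated form, not stronger.
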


\begin{proof}
    Note that 
    $$
    \THH(R/E)\simeq \THH(R)\otimes_{\THH(E)}E,
    $$
    and the statment follows immediately from Lemma \ref{lem:spherical-perfect}.
\end{proof}

\begin{ex}\label{ex:spherical-Witt-vectors}
    Let $k$ be a perfect ring of characteristic $p$, and let $\Sphere_{W(k)}$ be the $\Einfty$-ring of spherical Witt vectors constructed in \cite[Example 5.2.7]{Lurie_EllipticII18}. By \cite[Example 5.2.7]{Lurie_EllipticII18}, $\Sphere_{W(k)}$ satisfies the assumption of Lemma \ref{lem:spherical-perfect} and therefore the augmentation map $\THH(\Sphere_{W(k)})\rightarrow \Sphere_{W(k)}$ induces an equivalence $\THH(\Sphere_{W(k)})^\wedge_p\xrightarrow{\sim}\Sphere_{W(k)}$, and $\Sphere_{W(k)}$ is naturally a cyclotomic base. 
\end{ex}

\begin{ex}\label{ex:polynomial-base}
    Let $k$ be a finite field of characteristic $p$, and $\Sphere_{W(k)}$ the corresponding ring of spherical Witt vectors with coefficients in $k$. Set 
    $$
    \B_n:=\Sphere_{W(k)}[x_0,x_1,\ldots,x_n]=\Sphere_{W(k)}\otimes (\Sigma^{\infty}_+\N)^{\otimes n+1}.
    $$
    According to \cite[Proposition 11.3]{BMS19}, the spectrum $\B_n$ admits an $\Einfty$-cyclotomic structure with trivial $\T$-action, and also the augmentation $\THH(\B_n)\rightarrow \B_n$ admits a lift to a map of $\Einfty$-algebra in cyclotomic spectra; in other words, $\B_n$ admits the structure of a cyclotomic base.

    Also note that, by \cite[IV.1.2]{NS18} and \cite[Proposition 11.3]{BMS19}, the cyclotomic structure 
    \begin{align*}
    \varphi_p:\Sphere_{W(k)}[x_0,x_1,\ldots,x_n]\rightarrow & (\Sphere_{W(k)}[x_0,x_1,\ldots,x_n])^{tC_p} \\
    \simeq & (\Sphere_{W(k)}[x_0,x_1,\ldots,x_n])^\wedge_p
    \end{align*}
    is determined by the Frobenius on $W(k)$ and the assignment that $x_i\mapsto x_i^p$, $0\leq i\leq n$; in other words, $\varphi_p$ factors as 
    $$
    \Sphere_{W(k)}[x_0,x_1,\ldots,x_n]\xrightarrow{\phi}\Sphere_{W(k)}[x_0,x_1,\ldots,x_n]\rightarrow\Sphere_{W(k)}[x_0,x_1,\ldots,x_n]^\wedge_p,
    $$
    where $\phi$ is determined by the Frobenius of $\Sphere_{W(k)}$ and $\phi(x_i)=x_i^p$, $0\leq i\leq n$, and the second arrow is the canonical map for $p$-completion. By \cite[Proposition 11.3]{BMS19}, the map $\phi$ is in fact a morphism of cyclotomic bases. We set 
    $$
    B_n:=\pi_0 \B_n=W(k)[x_0,\ldots,x_n],
    $$ 
    which is a $\delta$-ring whose Frobenius is induced by $\phi$. We regard $B_n$ as a $\B_n$-algebra through the canonical truncation $\B_n\rightarrow \tau_{\leq 0}\B_n=\pi_0\B_n$.
\end{ex}

\begin{rem}\label{rem:polynomial-base}
    In Example \ref{ex:polynomial-base}, we can in fact take $k$ to be any perfect ring of characteristic $p$ and also consider $\Sphere_{W(k)}[x_0,x_1,\ldots]$, the spherical polynomial ring with infinitely many variables, and it is still a cyclotomic base. %But in general we do not have $\Sphere_{W(k)}^{tC_p}\simeq \Sphere_{W(k)}$. 
\end{rem}

\begin{ex}\label{ex:polynomial-base-perfection}
    Let $k$ be a perfect ring of characteristic $p$, and $\Sphere_{W(k)}$ the corresponding ring of spherical Witt vectors with coefficients in $k$. Set 
    $$
    \B_n:=\Sphere_{W(k)}[x_0,x_1,\ldots,x_n]=\Sphere_{W(k)}\otimes (\Sigma^{\infty}_+\N)^{\otimes n+1}.
    $$
    Since $\phi:B_n\rightarrow B_n$ is a morphism of cyclotomic bases, the object $\B_{n,\perf}:=\Sphere_{W(k)}[x_0^{1/p^\infty},\ldots,x_n^{1/p^\infty}]=\varinjlim_\phi \B_n$ is also a cyclotomic base. Note that $(B_{n,\perf})^\wedge_p$ satisfies the assumption of Lemma \ref{lem:spherical-perfect}: in fact, $(B_{n,\perf})^\wedge_p\simeq \Sphere_{W(k[x_0^{1/p^\infty},x_n^{1/p^\infty}])}$.  
\end{ex}

\begin{lem}[{\cite[Lemma 2.3]{LW22}}]\label{lem:cyclotomic-base}
    Let $E$ be a cyclotomic base. Then the functor $\THH(-/E):\CAlg_E\rightarrow \CAlg_E^{B\T}$ admits a natural lift to a functor 
        $$
        \THH(-/E):\CAlg_E\rightarrow \CycSp_E.
        $$
    Moreover, such a lift is functorial in $E$. 
\end{lem}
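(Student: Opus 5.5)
The plan is to build the lift from the identification of $\THH(-/E)$ as a base change of absolute $\THH$, and to do every step inside $\Einfty$-rings in cyclotomic spectra. By Remark \ref{rem:THH} (multiplicativity), for $A\in\CAlg_E$ we have a natural $\T$-equivariant equivalence of $\Einfty$-$E$-algebras
$$
\THH(A/E)\simeq \THH(A)\otimes_{\THH(E)}E .
$$
In this pushout, $\THH(A)$ and $\THH(E)$ are $\Einfty$-rings in $\CycSp$ by Proposition \ref{prop:THH-as-cyclotomic-spectrum}, functorially via \cite[Corollary IV.2.3]{NS18}. The map $\THH(E)\to\THH(A)$ is $\THH$ applied to the unit $E\to A$, so it is a map of cyclotomic $\Einfty$-rings. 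The map $\THH(E)\to E$ is a cyclotomic $\Einfty$-map by the definition of a cyclotomic base (Definition \ref{defn:cyclotomic-base}).

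The key structural input is that $\CycSp$ is a presentably symmetric monoidal $\infty$-category and that the forgetful functor $\CycSp\to \Fun(B\T,\mathrm{Sp})$ is symmetric monoidal and preserves colimits. This holds because colimits and tensor products in $\CycSp$ are formed underlying. The lax monoidal structure on $(-)^{tC_p}$ is what lets $\varphi_p$ be defined on tensor products, and the colimit statement follows since $\CycSp$ is a lax equalizer of colimit-preserving functors, as in \cite[II.1]{NS18}. Consequently pushouts in $\CAlg(\CycSp)$ are computed by relative tensor products, and these are preserved by the forgetful functor to $\CAlg(\Fun(B\T,\mathrm{Sp}))$.

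I would then define the lift of $\THH(A/E)$ to be the pushout in $\CAlg(\CycSp)$ of $\THH(A)\leftarrow\THH(E)\to E$. This pushout is naturally an algebra under $E$, so it lands in $\CAlg_E(\CycSp)$; this is what I take $\CycSp_E$ to mean. Its underlying $\T$-equivariant $\Einfty$-$E$-algebra is $\THH(A/E)$ by the displayed equivalence. Functoriality in $A$ is clear, since everything is a composite of functors: absolute $\THH$ on $\CAlg_E$ followed by the pushout functor $\CAlg(\CycSp)_{\THH(E)/}\to\CAlg(\CycSp)_{E/}$.

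For functoriality in $E$, let $E_1\to E_2$ be a morphism of cyclotomic bases. The commutative square in Definition \ref{defn:cyclotomic-base} gives a map of spans $(\THH(A)\leftarrow\THH(E_1)\to E_1)\to(\THH(A)\leftarrow\THH(E_2)\to E_2)$ for any $E_2$-algebra $A$, and hence a map of pushouts. Making this coherent means arranging the whole construction as a functor out of the $\infty$-category of pairs $(E,A)$, which is routine once the constructions above are phrased as functors on diagram categories.

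I expect the main obstacle to be the claim that the forgetful functor from $\CAlg(\CycSp)$ preserves relative tensor products, in particular that the Frobenius on the pushout is compatible with the one on underlying objects. If the general symmetric monoidal argument needs care, I would fall back on \cite[Lemma 2.3]{LW22} and its proof, which does exactly this.
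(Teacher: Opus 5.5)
Your proposal is correct and is essentially the argument behind \cite[Lemma 2.3]{LW22}, which is all the paper's proof consists of: you write $\THH(A/E)\simeq\THH(A)\otimes_{\THH(E)}E$, form this pushout in $\CAlg(\CycSp)$, and use that the forgetful functor $\CycSp\to\Fun(B\T,\mathrm{Sp})$ is symmetric monoidal and colimit-preserving. One small correction to your justification: $(-)^{tC_p}$ does not preserve all colimits, but the lax-equalizer result in \cite[\S II.1]{NS18} only requires the source functor (here the identity) to preserve colimits and the target functor to be accessible, so your conclusion still holds.
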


\begin{proof}
    See \cite[Lemma 2.3]{LW22}.
\end{proof}

\begin{defn}\label{notation:relative-cyclotomic-Frobenius}
    Let $E$ be a cyclotomic base whose underlying spectrum is connective, and $\THH(-/E):\CAlg_{E}\rightarrow \CycSp_E$ the corresponding relative topological Hochschild homology. For each prime $\ell$, there is the {\it canonical map} 
    \begin{align*}
        \can:\TC^-(-/E) & \simeq (\THH(-/E)^{hC_\ell})^{h(\T/C_\ell)}  \\
        & = (\THH(-/E)^{hC_\ell})^{h\T} \rightarrow (\THH(-/E)^{tC_\ell})^{h\T},
    \end{align*}
    and if restricted to connective $\Einfty$-$E$-algebras, we have (see \cite[Lemma II.4.2]{NS18})
    $$
    (\THH(-/E)^{tC_\ell})^{h\T}\simeq \TP(-/E)^\wedge_\ell
    $$ 
    Also, denote the {\it cyclotomic Frobenius of $\THH(-/E)$ at $\ell$} by 
    $$
    \varphi_\ell:\THH(-/E)\rightarrow \THH(-/E)^{tC_\ell}, 
    $$
    which further induces 
    $$
    \varphi_{\ell}^{h\T}:\TC^-(-/E)\rightarrow (\THH(-/E)^{tC_\ell})^{h\T}.
    $$
    We define the {\it topological cyclic homology relative to $E$} as follows:
    $$
    \TC(-/E):=\fib(\TC^-(-/E)\xrightarrow{\prod_{\ell}\varphi_\ell^{h\T}-\can}\prod_\ell (\THH(-/E)^{tC_\ell})^{h\T}).
    $$

    Let $R$ be a connective $E$-algebra, and $p$ a prime number. Then $\THH(R/E)$ is connective, and $\THH(R/E)^\wedge_p$ is a $p$-cyclotomic spectrum and $\varphi_\ell$ becomes trivial if $\ell\neq p$. We have 
    $$
    \TC(R/E)^\wedge_\ell:=\fib(\TC^-(R/E)^\wedge_\ell\xrightarrow{\varphi_p^{h\T}-\can}\TP(R/E)^\wedge_\ell).
    $$
    If $\THH(R/E)$ is already $p$-complete, then we have 
    $$
    \TC(R/E)=\fib (\TC^-(R/E)\xrightarrow{\varphi_p^{h\T}-\can}\TP(R/E)).
    $$
\end{defn}

%\begin{rem}\label{rem:spectral-sequences}
%    There are the homotopy fixed point spectral sequence 
%    $$
%    E^{2}_{i,j}=\THH_*(-/E)[v]\Rightarrow \TC^-_{i+j}(-/E),
%    $$
%    and the Tate spectral sequence 
%    $$
%    E^2_{i,j}=\THH_*(-/E)[\sigma^{\pm 1}]\Rightarrow \TP_{i+j}(-/E),
%    $$
%    where $\THH_j(-/E)$ has degree $(0,j)$, $|v|=(-2,0)$, $|\sigma|=(2,0)$, and $\can(v)=\sigma^{-1}$. %The Nygaard filtrations $\calN^{\geq j}$ are defined to be the abutment filtrations of these spectral sequences, and these filtrations are multiplicative. 
    
%    If $\THH_*(-/E)$ is concentrated in even degrees, then the Tate spectral sequence collapses at the $E^2$-term, and let $\Fil^{\geq \star}_{\text{abutment}}\TP_0(-/E)$ denote the corresponding abutment filtration. Since for $i\in \Z$ we have $\Fil^{\geq 2i}_{\text{abutment}}\TP_0(-/E)=\Fil^{\geq 2i+1}_{\text{abutment}}\TP_0(-/E)$, we define the Nygaard filtration $\calN^{\geq \star}\TP_0(-/E)$ on $\TP_0(-/E)$ by setting 
%    $$
%    \calN^{\geq \star}\TP_0(-/E)=\Fil^{\geq 2\star}_{\text{abutment}}\TP_0(-/E).
%    $$
%    For an integer $j$ we denote by $p_j$ the natural projection 
%    $$
%    \calN^{\geq j} \TP_0(-/E)\rightarrow E^2_{-2j,2j}= \THH_{2j}(-/E)[\sigma^{-j}].
%    $$
%    Moreover, in this case we have 
%    $$
%    \gr^*_{\calN}\TP_0(-/E)=\bigoplus_{j\in \Z}\THH_{2j}(-/E)[\sigma^{-j}],
%    $$
%\end{rem}

\section{A Reminder on Relative Quasiregular Semiperfectoids}\label{section:a-reminder-on-qrsp-relative}

We would like to study the relative topological Hochschild homology $\THH(R/\Sphere_{W(k)[x_0,x_1,\ldots,x_n]})$, where $W(k)[x_0,x_1,\ldots,x_n]\rightarrow R$ is a (pre-prismatic) relatively quasiregular semiperfectoid $\delta$-pair in the sense of \cite[Definition 9.2]{AKN23}. In this section we recollect notions concerning relative quasiregular semiperfectoids from \cite{AKN23} and \cite{BS22}, and we also recall key results on $\THH$ for quasiregualr semiperfectoid rings from \cite{BS22} and \cite{BMS19}. Fix a prime number $p$. 

\begin{defn}[{\cite[Definition 3.2]{BS22}}]\label{defn:distinguished-ideals}
    Let $A$ be a ($p$-typical) $\delta$-ring. An ideal $I\subset A$ is called {\it distinguished} if the following conditions are satisfied:
    \begin{itemize}
        \item[(1)] As an $A$-module, $I$ is invertible.  

        \item[(2)] The ring $A/I$ has bounded $p$-power torsion. 

        \item[(3)] One has $p\in IA^\wedge_{(p,I)}+\phi(I)A^\wedge_{(p,I)}$.
    \end{itemize}
    In other words, $I\subset A$ is distinguished if $(A^{\wedge}_{(p,I)},IA^{\wedge}_{(p,I)})$ is a bounded prism in the sense of \cite[Definition 3.2]{BS22}.
\end{defn}

In this note, we will mainly consider the $\delta$-ring $B_n=W(k)[x_0,\ldots,x_n]$, where $k$ is a finite field and $B_n$ is  equipped with the $\delta$-structure determined by the canonical $\delta$-structure on $W(k)$ and $\delta(x_i)=0$, $0\leq i\leq n$.

\begin{ex}\label{ex:distinguished-ideals}
    Let $k$ be a perfect field of characteristic $p$, and $B_n=W(k)[x_0,x_1,\ldots,x_n]$. We have the following important examples of distinguished ideals $I$ in $B_n$:
    \begin{itemize}
        \item[(a)] Take $I=(p)$.
        
        \item[(b)] Take $I=(E(x_0))$ where $E(x_0)\in W(k)[x_0]\subset B_n$ is an Eisenstein polynomial.  

        \item[(c)] Take $I=([p]_{x_0})$ where $[p]_{x_0}=\frac{x_0^{p}-1}{x_0-1}\in W(k)[x_0]$.
    \end{itemize}
\end{ex}

\begin{defn}[{\cite[Definition 9.2]{AKN23}, \cite[Definition 15.1]{BS22}}]\label{defn:qrsp-relative}
    Let $A$ be a $\delta$-ring and $R$ a commutative ring. A ring homomorphism $A\rightarrow R$ is called {\it relatively quasiregular semiperfectoid} if the following conditions are satisfied:
    \begin{itemize}
        \item[(1)] The ring $R$ has bounded $p$-power torsion and is $p$-complete. 

        \item[(2)] The ring homomorphism $A\rightarrow  R$ is surjective. 

        \item[(3)] The ideal $\ker(A\rightarrow R)$ contains a distinguished ideal $I$ of $A$, and $L\Omega^1_{R/(A/I)}\in \rmD(R)$ has $p$-complete Tor-amplitude concentrated in homological degree $[1,1]$.
    \end{itemize} 
\end{defn}

Note that Definition \ref{defn:qrsp-relative} is a restricted version of \cite[Definition 9.2]{AKN23} (or \cite[Definition 15.1]{BS22}). %and therefore not suitable for theoretical development. 
We introduce this definition only for the convenience of making statements.

\begin{ex}\label{ex:qrsp}
    Let $S$ be a quasiregular semiperfectoid ring in the sense of \cite[Definition 4.20]{BMS19}, that is, $S$ satisfies the following conditions (see also \cite[Remark 4.22]{BMS19} and \cite[Lemma 4.25]{BMS19}):
    \begin{itemize}
        \item[(1)] The ring $S$ has bounded $p$-power torsion and is $p$-complete.

        \item[(2)] There exists a surjective map $R\rightarrow S$ with $R$ perfectoid such that $L\Omega^1_{S/R}\in \rmD(S)$ has $p$-complete Tor-amplitude concentrated in homological degree $[1,1]$.
    \end{itemize}
    Then the map $W(R^\flat)\rightarrow S$ is relatively quasiregular semiperfectoid in the sense of Definition \ref{defn:qrsp-relative}. Note that, with \cite[Definition 9.2]{AKN23} and \cite[Example 9.4]{AKN23}, we should regard $\Z_p\rightarrow S$ as relatively quasiregular semiperfectoid.
\end{ex}

\begin{ex}\label{ex:qrsp-relative-polynomial}
    Let $K$ be a finite extension of $\Q_p$, and $\calO_K$ the ring of integers of $K$, and $k$ the residue field, and let $\varpi\in \calO_K$ be a uniformizer. %and $E_K(x_0)\in W(k)[x_0]$ a minimal polynomial of $\varpi$. 
    Then the following statements hold.
    \begin{itemize}
        \item[(a)] The ring homomorphism $W(k)[x_0]\xrightarrow{x_0\mapsto \varpi}\calO_K$ is relatively quasiregular semiperfectoid. 

        \item[(b)] In general, for an $n\geq 0$, the ring homomorphism $W(k)[x_0,\ldots,x_n]\xrightarrow{x_i\mapsto \varpi,0\leq i\leq n}\calO_K$ is relatively quasiregular semiperfectoid.
    \end{itemize}
\end{ex}

\begin{lem}\label{lem:base-change-for-qrsp-relative}
    Let $A$ be a $\delta$-ring, $R$ a commutative ring, and let $A\rightarrow R$ be relatively quasiregular semiperfectoid. Suppose that $A\rightarrow A'$ is a $p$-completely flat map of $\delta$-rings. Then $A'\rightarrow R':=R\widehat{\otimes}^L_{A}A'$ is relatively quasiregular semiperfectoid.
\end{lem}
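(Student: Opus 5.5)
We check the three conditions of Definition \ref{defn:qrsp-relative} for $A'\rightarrow R'$ in turn. Let $J=\ker(A\rightarrow R)$ and let $I\subset J$ be the distinguished ideal from condition (3). Write $A/I\rightarrow R$ as a surjection with kernel $J/I$. We take $I':=IA'$ as the distinguished ideal of $A'$, and we compute everything after $p$-completion, using that $A\rightarrow A'$ is $p$-completely flat.

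\emph{Step 1: $I'$ is distinguished.} Since $I$ is an invertible $A$-module, $I\otimes_A A'\simeq IA'$ is invertible over $A'$; it is locally principal, and flatness keeps a local generator a nonzerodivisor, at least after $p$-completion. The relation $p\in IA^\wedge_{(p,I)}+\phi(I)A^\wedge_{(p,I)}$ maps, via the $\delta$-map $A\rightarrow A'$, to the corresponding relation in $A'^\wedge_{(p,I')}$, because $\phi(IA')\supset \phi(I)A'$. For bounded $p$-power torsion of $A'/I'$: $p$-complete flatness gives $(A/I)\widehat{\otimes}^L_A A'\simeq (A'/I')^\wedge_p$, and bounded $p^\infty$-torsion passes along $p$-completely flat base change via \cite[Lemma 3.7]{BS22}-type arguments. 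The statement is only needed up to $(p,I)$-completion, since Definition \ref{defn:distinguished-ideals} is phrased through the completed prism, so it suffices to argue for the completion. This is essentially the standard fact that $p$-completely flat maps of $\delta$-rings preserve bounded prisms \cite[Lemma 3.7]{BS22}.

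\emph{Step 2: conditions (1) and (2).} Surjectivity follows because $R'=(R\otimes^L_AA')^\wedge_p$ has $\pi_0$ equal to the $p$-completion of $R\otimes_AA'$, a quotient of $A'$; $p$-completion of a quotient of $A'$ is still a quotient of $A'^\wedge_p$. One may need to replace $A'$ by $A'^\wedge_p$ here, which is harmless for the definition. Discreteness of $R'$ and bounded $p$-torsion follow from $p$-complete flatness: $R'\otimes^L\Z/p\simeq (R\otimes^L\Z/p)\otimes^L_{A/p}A'/p$ is concentrated in degrees $[0,1]$ with $A'/p$ flat over $A/p$. Bounded $p^\infty$-torsion of $R$ then transfers by the same argument as in Step 1.

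\emph{Step 3: the cotangent complex.} Base change for cotangent complexes gives
\[
L\Omega^1_{R'/(A'/I')}\simeq L\Omega^1_{R/(A/I)}\widehat{\otimes}^L_{R}R',
\]
because $R'\simeq R\widehat{\otimes}^L_{A/I}(A'/I')$ and $A/I\rightarrow A'/I'$ is $p$-completely flat. Tor-amplitude in $[1,1]$ is preserved under $p$-completed base change: for any $R'$-module $N$ killed by $p$, we have $L\Omega^1_{R'/(A'/I')}\otimes^L_{R'}N\simeq L\Omega^1_{R/(A/I)}\otimes^L_R N$, and this is concentrated in degree $1$ by hypothesis.

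The main obstacle I anticipate is the bookkeeping in Step 1 and Step 2, specifically the bounded $p^\infty$-torsion and discreteness after $p$-completed base change. I would handle this by invoking \cite[Lemma 3.7]{BS22} on flat base change of bounded prisms and the analogous argument for $R$.
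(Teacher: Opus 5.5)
Your proof follows the same route as the paper's. You take $I'=IA'$, check completeness, bounded torsion and surjectivity, and get the Tor-amplitude condition from base change of the cotangent complex. Your Step 3, testing against $R'$-modules killed by $p$, is if anything more explicit than the paper's.

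The one step that does not work as written is the discreteness of $R'$ in Step 2. Knowing that $R'\otimes^L\Z/p$ lies in homological degrees $[0,1]$ gives only two things: $\pi_iR'=0$ for $i\geq 2$, and $\pi_1R'$ is $p$-torsion-free. It cannot rule out $\pi_1R'\neq 0$. For example, $\Z_p[1]$ is derived $p$-complete with $\Z_p[1]\otimes^L\Z/p\simeq\F_p[1]$.

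The bounded $p^\infty$-torsion of $R$ is therefore needed already for discreteness, not only afterwards for the boundedness conclusion. Here is the argument:
\begin{itemize}
\item[(a)] $F:=R\otimes^L_AA'$ is a $p$-completely flat object of $\rmD(R)$, since $F\otimes^L_RR/p\simeq (A'\otimes^L_AA/p)\otimes_{A/p}R/p$.
\item[(b)] Consequently $\pi_1(F\otimes^L\Z/p^n)\simeq R[p^n]\otimes_{R/p^n}(F\otimes^L_RR/p^n)$.
\item[(c)] If $R[p^\infty]=R[p^c]$, the transition maps are induced by $R[p^{n+c}]\xrightarrow{p^c}R[p^n]$, which vanish.
\item[(d)] So these $\pi_1$'s form a pro-zero system. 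Hence $R'=\varprojlim F\otimes^L\Z/p^n$ is discrete and $R'[p^\infty]=R'[p^c]$.
\end{itemize}
This is exactly \cite[Corollary 4.8]{BMS19}, which the paper invokes at this point in place of your degree count.

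Your caveats about completion are warranted, and the paper's proof is looser there. With $A\rightarrow A'$ only $p$-completely flat, $A'\rightarrow R'$ need not be surjective. For instance, $A'=A[y]$ with $\delta(y)=0$ gives $R'$ equal to the $p$-completion of $R[y]$. Likewise, $IA'$ need not be invertible in $A'$ itself, since a $\Z[1/p]$-algebra factor of $A'$ can carry $I$-torsion. So one should pass to $A'^\wedge_p$ (or the $(p,I)$-completion) and read Definition \ref{defn:distinguished-ideals} through the completed prism, as you propose.
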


\begin{proof}
    Since $A\rightarrow A'$ is $p$-completely flat, by \cite[Corollary 4.8]{BMS19} $R'$ has bounded $p$-power torsion and is $p$-complete. It is clear that $A'\rightarrow R'$ is surjective. Let $I\subset A$ be a distinguished ideal which is contained in $\ker(A\rightarrow R)$, then $IA'\subset A'$ is a distinguished ideal contained in $\ker(A'\rightarrow R')$. Finally, we have 
    \begin{align*}
    L\Omega^1_{R\otimes_A^LA'/(A'/IA')}
    & \simeq L\Omega^1_{R\otimes^L_{A/I}(A'/IA')/(A'/IA')}\\ 
    & \simeq L\Omega^1_{R/(A/I)}\otimes^L_{A/I}(A'/IA') \\
    & \simeq L\Omega^1_{R/(A/I)}\otimes^L_R(R\otimes^L_{A}A'),
    \end{align*}
    and since $L\Omega^1_{R/(A/I)}\in \rmD(R)$ has $p$-complete Tor-amplitude concentrated in homological degree $[1,1]$, $L\Omega^1_{R'/(A'/IA')}\in \rmD(R')$ has $p$-complete Tor-amplitude concentrated in homological degree $[1,1]$.
\end{proof}

\begin{cor}\label{cor:perfection-and-qrsp}
    Let $A$ be a $\delta$-ring with $p$-completely flat Frobenius $\phi:A\rightarrow A$, and $R$ a commutative ring, and $A\rightarrow R$ be relatively quasiregular semiperfectoid. Set $A_\perf:=\varinjlim_{\phi}A$ with a canonical map $A\rightarrow A_\perf$.  Then $R_\perf:=R\widehat{\otimes}^L_{A}A_\perf$ is a quasiregular semiperfectoid ring.
\end{cor}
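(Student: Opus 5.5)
Apply Lemma \ref{lem:base-change-for-qrsp-relative} to the map $A\rightarrow A_\perf$, then check the conditions of Example \ref{ex:qrsp} for $R_\perf$.

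First I check that $A\rightarrow A_\perf$ is $p$-completely flat. Each transition map in the colimit $A\xrightarrow{\phi}A\xrightarrow{\phi}\cdots$ is $p$-completely flat by hypothesis. So each composite $A\rightarrow A$ is $p$-completely flat, and a filtered colimit of $p$-completely flat $A$-modules is $p$-completely flat, since $-\otimes^L_A A/p$ commutes with filtered colimits and flatness over $A/p$ is preserved under filtered colimits. The map $A\rightarrow A_\perf$ is also a map of $\delta$-rings, because $\phi$ is a $\delta$-ring endomorphism. Lemma \ref{lem:base-change-for-qrsp-relative} then shows that $A_\perf\rightarrow R_\perf$ is relatively quasiregular semiperfectoid. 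In particular $R_\perf$ is $p$-complete with bounded $p$-power torsion, and $\ker(A_\perf\rightarrow R_\perf)$ contains the distinguished ideal $IA_\perf$.

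Next I produce the perfectoid ring required in Example \ref{ex:qrsp}. The $\delta$-ring $A_\perf$ has bijective Frobenius, so it is a perfect $\delta$-ring, and therefore so is its $(p,I)$-completion $(A_\perf)^\wedge_{(p,I)}$. The pair $((A_\perf)^\wedge_{(p,I)}, I(A_\perf)^\wedge_{(p,I)})$ is a bounded prism: $IA_\perf$ is distinguished, because invertibility is preserved under base change, condition (3) of Definition \ref{defn:distinguished-ideals} is inherited, and bounded torsion of $A_\perf/IA_\perf$ follows from $p$-complete flatness. Since the $\delta$-ring is perfect, this is a perfect prism, and by \cite{BS22} (perfect prisms correspond to perfectoid rings) the ring $P:=(A_\perf)^\wedge_{(p,I)}/I$ is perfectoid.

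Finally I verify condition (2) of Example \ref{ex:qrsp} for $P\rightarrow R_\perf$. Since $R_\perf$ is $p$-complete and killed by $I$, and an $I$-torsion ring is $I$-adically complete, it is $(p,I)$-complete. Hence the surjection $A_\perf\rightarrow R_\perf$ factors through a surjection $P\rightarrow R_\perf$. The cotangent complex $L\Omega^1_{R_\perf/P}$ agrees, after $p$-completion, with $L\Omega^1_{R_\perf/(A_\perf/IA_\perf)}$, because completion does not change the $p$-completed cotangent complex. The latter has $p$-complete Tor-amplitude in $[1,1]$ by the first step. So $R_\perf$ is quasiregular semiperfectoid.

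The main obstacle is the identification of $P$ as perfectoid. This needs boundedness of the prism $(A_\perf)^\wedge_{(p,I)}$ and the perfect-prism/perfectoid correspondence. The delicate point is the bounded $p$-power torsion of $A_\perf/IA_\perf$, for which I would use $p$-complete flatness of $A/I\rightarrow A_\perf/IA_\perf$ together with \cite[Corollary 4.8]{BMS19}, as in the proof of Lemma \ref{lem:base-change-for-qrsp-relative}.
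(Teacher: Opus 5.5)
Your route is the paper's. The paper also applies Lemma~\ref{lem:base-change-for-qrsp-relative} to $A\to A_\perf$, observes that $((A_\perf)^\wedge_{(p,I)},I(A_\perf)^\wedge_{(p,I)})$ is a perfect prism so that $(A_\perf/I)^\wedge_p$ is perfectoid by \cite[Theorem 3.10]{BS22}, uses $p$-completeness of $R_\perf$ to get a surjection $(A_\perf/I)^\wedge_p\to R_\perf$, and compares the two cotangent complexes. Your check that $A\to A_\perf$ is $p$-completely flat (as a filtered colimit of the $p$-completely flat maps $\phi^n$) supplies a step the paper leaves implicit.

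\textbf{The gap: surjectivity of $A_\perf\to R_\perf$.} One step fails as written, in your proposal and in the paper alike. You import this surjectivity from Lemma~\ref{lem:base-change-for-qrsp-relative} and use it to obtain the surjection $P\to R_\perf$. But $A_\perf$ is not $p$-complete while $R_\perf$ is a $p$-completion, so this map need not be onto.

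For a counterexample, take $\Z_p[x_0]\to\Z_p$, $x_0\mapsto p$, with $I=(x_0-p)$; this is the case $K=\Q_p$, $\varpi=p$ of Example~\ref{ex:qrsp-relative-polynomial}(a). Then $R_\perf=\Z_p[p^{1/p^\infty}]^\wedge_p$. However, the image of $\Z_p[x_0^{1/p^\infty}]$ is only the non-complete subring $\Z_p[p^{1/p^\infty}]$.

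\textbf{The repair.} The argument only needs $P\to R_\perf$ to be surjective, and this holds for a different reason.
\begin{enumerate}
\item Let $J=\ker(A\to R)$. Derived $p$-completion does not change the derived reduction mod $p$, so $R_\perf/p\cong A_\perf/(J,p)A_\perf$. Hence $P\to R_\perf$ is surjective modulo $p$.
\item $P$ is $p$-complete, being perfectoid.
\item $R_\perf$ is classically $p$-complete, since it is derived $p$-complete with bounded $p$-power torsion.
\item Successive approximation therefore makes $P\to R_\perf$ surjective.
\end{enumerate}
With this substitution, the factorization through $P$ and the cotangent complex comparison go through as you wrote them.

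Note also that this surjectivity, not the bounded torsion of $A_\perf/IA_\perf$ you singled out, is the real subtlety. Boundedness is not an input to the perfect-prism/perfectoid correspondence.
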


\begin{proof}
    By Lemma \ref{lem:base-change-for-qrsp-relative}, the map $A_\perf\rightarrow R_\perf$ is relatively quasiregular semiperfectoid. Now let $I\subset A_\perf$ be a distinguished ideal contained in $\ker(A_\perf\rightarrow R_\perf)$; then $((A_\perf)^\wedge_{(p,I)},I(A_\perf)^\wedge_{(p,I)})$ is a perfect prism and by \cite[Theorem 3.10]{BS22} $(A_\perf/I)^\wedge_p$ is a perfectoid ring. Since $R_\perf$ is $p$-complete, we have a surjective map $(A_\perf/I)^\wedge_p\rightarrow R_\perf$. Note that $L\Omega^1_{R_\perf/(A_\perf/I)}$ and $L\Omega^1_{R_\perf/(A_\perf/I)^\wedge_p}$ have the same $p$-complete Tor-amplitude. Therefore $R_\perf$ is a quasiregular semiperfectoid ring.  
\end{proof}

We close this section by briefly recalling prismatic cohomology and $\THH$ for quasiregular semiperfectoid rings. For a comprehensive account of prismatic cohomology %(and a general theory of prismatic cohomology relative to $\delta$-rings), 
see, for example \cite[$\S$ 1]{AKN23} and \cite[Theorem 1.8, Theorem 1.14]{BS22}, and here we only fix the notations. 

\begin{notation}[Relative Prismatic Cohomology, {\cite[$\S$ 1]{AKN23}}]\label{notation:relative-prismatic-cohomology}
    Let $(A,I)$ be a bounded prism and $R$ an $A/I$-algebra. The theory of prismatic cohomology of $R$ relative to $(A,I)$ produces the following data:
    $$
    \underline{\Prism}_{R/A}=(\Prism^{[\star]}_{R/A}\{*\},\calN^{\geq \star}\widehat{\Prism}_{R/A}^{(1)}\{*\},c,\phi_{/A}),
    $$
    where 
    \begin{itemize}
        \item[(a)] $\Prism_{R/A}$ is an $\Einfty$-$A$-algebra, which is referred to as the {\it derived prismatic cohomology of $R$ relative to $(A,I)$};

        \item[(b)] $\Prism_{R/A}\{i\}$, $i\in \Z$, are $\Prism_{R/A}$-modules (with $\Prism_{R/A}\{0\}=\Prism_{R/A}$) which are graded multiplicative with respect to $i\in \Z$; we will refer to $\Prism_{R/A}\{i\}$ as the {\it $i$-th Breuil-Kisin twist of $\Prism_{R/A}$};

        \item[(c)] for $i\in \Z$, $\Prism^{[\star]}_{R/A}\{i\}$ is a complete tower
        $$
        \cdots\rightarrow \Prism^{[n+1]}_{R/A}\{i\}\rightarrow \Prism^{[n]}_{R/A}\{i\}\rightarrow \Prism^{[n-1]}_{R/A}\{i\}\rightarrow \cdots,
        $$
        which is multiplicative in $i,n\in \Z$, with $\Prism^{[0]}_{R/A}\{i\}=\Prism_{R/A}\{i\}$ and underlying object $\Prism_{R/A}\{i\}[1/I]$;
        we will refer to $\Prism^{[\star]}_{R/A}\{i\}$ as the {\it Hodge-Tate tower on $\Prism_{R/A}\{i\}$}; in fact, $\Prism_{R/A}^{[n]}\{i\}=\Prism_{R/A}\{i\}\otimes_A I^n$, and 
        $$
        \gr^n\Prism^{[\star]}_{R/A}\{i\}\simeq \overline{\Prism}_{R/A}\{i+n\}, 
        $$
        where $\overline{\Prism}_{R/A}\{i+n\}$ is the {\it $(i+n)$-th Breuil-Kisin twisted Hodge-Tate cohomology of $R$ relative to $(A,I)$}; note that for $i\in \Z$, $\overline{\Prism}_{R/A}\{i\}$ is equipped with the {\it conjugate filtration $\Fil^\cj_{\leq \star}\overline{\Prism}_{R/A}\{i\}$ relative to $(A,I)$}
        $$
        \cdots\rightarrow \Fil^{\cj}_{\leq n-1}\overline{\Prism}_{R/A}\{i\}\rightarrow \Fil^{\cj}_{\leq n}\overline{\Prism}_{R/A}\{i\}\rightarrow \Fil^{\cj}_{\leq n+1}\overline{\Prism}_{R/A}\{i\}\rightarrow \cdots
        $$
        which is increasing, exhasutive, and multiplicative in $i,n\in \Z$ and is zero for $\star<0$, with 
        $$
        \gr^\cj_n \overline{\Prism}_{R/A}\{i\}\simeq L\widehat{\Omega}^n_{R/(A/I)}[-n]\{i- n\}, \quad \quad n\geq 0.
        $$

        \item[(d)] for $i\in \Z$, $\calN^{\geq \star}\widehat{\Prism}_{R/A}^{(1)}\{i\}$ is a complete filtration of $A$-modules 
        $$
        \cdots\rightarrow \calN^{\geq n+1}\widehat{\Prism}_{R/A}^{(1)}\{i\}\rightarrow \calN^{\geq n}\widehat{\Prism}_{R/A}^{(1)}\{i\}\rightarrow \calN^{n-1}\widehat{\Prism}_{R/A}^{(1)}\{i\}\rightarrow \cdots,
        $$
        which is multiplicative in $i,n\in \Z$, and is contant for $\star\leq 0$; we will refer to $\calN^{\geq \star}\widehat{\Prism}_{R/A}^{(1)}\{i\}$ as the {\it Nygaard filtration on the Nygaard-completed, Frobenius twisted, $i$-th Breuil-Kisin twisted prismatic cohomology $\widehat{\Prism}_{R/A}^{(1)}\{i\}$};

        \item[(e)] for $i\in \Z$, $c:\Prism_{R/A}\{i\}\rightarrow \widehat{\Prism}_{R/A}^{(1)}\{i\}$ is a $\phi_A$-semilinear map;

        \item[(f)] for $i\in \Z$, $\phi_{/A}:\calN^{\geq \star}\widehat{\Prism}_{R/A}^{(1)}\{i\}\rightarrow \Prism^{[\star-i]}_{R/A}\{i\}$ is the {\it filtered relative Frobenius map}; by taking the associated graded at $n\geq 0$, we get the map $\gr^n\phi_{/A}:\gr^n_\calN\widehat{\Prism}^{(1)}_{R/A}\{i\}\rightarrow \gr^n\Prism_{R/A}^{[\star-i]}\{i\}\simeq \overline{\Prism}_{R/A}\{n\}$, which induces an equivalence
        $$
        \gr^n_\calN\widehat{\Prism}_{R/A}^{(1)}\{i\}\xrightarrow{\sim} \Fil_{\leq n}^\cj \overline{\Prism}_{R/A}\{n\}.
        $$
    \end{itemize}
\end{notation}

\begin{thm}[{\cite[$\S$ 7.2]{BMS19}}, {\cite[$\S$ 13]{BS22}}]\label{thm:THH-for-qrsp}
    Let $S$ be a quasiregular semiperfectoid ring, equipped with a ring homomorphism $R\rightarrow S$ where $R$ is a perfectoid ring with the Fontaine map $\theta:W(R^\flat)\rightarrow R$, and regard $(W(R^\flat),\ker\theta)$ as a prism. Let $\calN^{\geq \star}\widehat{\Prism}^{(1)}_{S/W(R^\flat)}\{i\}$, $i\in \Z$, denote the Nygaard filtration on the Nygaard-completed, Frobenius twisted, $i$-th Breuil-Kisin twisted prismatic cohomology $\widehat{\Prism}_{S/W(R^\flat)}^{(1)}\{i\}$, and let $\gr_\calN^*\widehat{\Prism}_{S/W(R^\flat)}^{(1)}\{i\}$ denote the associated graded. Also, we regard $S$ naturally as an $\Sphere_{W(R^\flat)}$-algebra. Note that $\THH(S/\Sphere_{W(R^\flat)})$ is $p$-complete by \cite[Lemma 3.6]{KN22}. Then the following statements hold.
    \begin{itemize}
        \item[(1)] The homotopy groups of $\THH(S/\Sphere_{W(R^\flat)})$ are concentrated in even degrees, and there is a canonical isomorphism of graded rings 
        $$
        \pi_{2*} \THH(S/\Sphere_{W(R^\flat)})\simeq \gr^*_\calN\widehat{\Prism}^{(1)}_{S/W(R^\flat)}.
        $$

        \item[(2)] The homotopy groups of $\TC^-(S/\Sphere_{W(R^\flat)})$ are concentrated in even degrees, and there is a canonical isomorphism of graded rings 
        $$
        \pi_{2*}\TC^-(S/\Sphere_{W(R^\flat)})\simeq \bigoplus_{*\in \Z\ \text{graded}}\calN^{\geq *}\widehat{\Prism}^{(1)}_{S/W(R^\flat)}\{*\}.
        $$

        \item[(3)] The homotopy groups of $\TP(S/\Sphere_{W(R^\flat)})$ are concentrated in even degrees, and there is a canonical isomorphism of graded rings 
        $$
        \pi_{2*}\TP(S/\Sphere_{W(R^\flat)})\simeq \bigoplus_{*\in \Z \ \text{graded}}\widehat{\Prism}^{(1)}_{S/W(R^\flat)}\{*\}.
        $$

        \item[(4)] Under the identifications in (2) and (3), the canonical map $\can:\TC^-(S/\Sphere_{W(R^\flat)})\rightarrow \TP(S/\Sphere_{W(R^\flat)})$ induces on homotopy groups the canonical inclusions of the Nygaard filtration.

        \item[(5)] Under the identifications (2) and (3), the cyclotomic Frobenius $\varphi_p^{h\T}:\TC^-(S/\Sphere_{W(R^\flat)})\rightarrow \TP(S/\Sphere_{W(R^\flat)})$ induces on homotopy groups the following map 
        $$
        \bigoplus_{*\in \Z\ \text{graded}}\calN^{\geq *}\widehat{\Prism}^{(1)}_{S/W(R^\flat)}\{*\}\xrightarrow{c\circ\phi_{/W(R^\flat)}}\bigoplus_{*\in \Z\ \text{graded}}\widehat{\Prism}^{(1)}_{S/W(R^\flat)}\{*\},
        $$
        where $c$ and $\phi_{/W(R^\flat)}$ are maps in Notation \ref{notation:relative-prismatic-cohomology}, (e) and (f), respectively. 

        %Consider the following commutative diagram:
        %$$
        %\xymatrix@R=50pt@C=50pt{
        %\TC^-(S/\Sphere_{W(R^\flat)}) \ar[r]^{\varphi_p^{h\T}} \ar[d] & \TP(S/\Sphere_{W(R^\flat)}) \ar[d] \\
        %\THH(S/\Sphere_{W(R^\flat)}) \ar[r]^{\varphi_p} & %\THH(S/\Sphere_{W(R^\flat)})^{tC_p}.
        %}
        %$$
        %With the identifications in (1), (2), and (3), by taking the homotopy groups of the above commutative diagram we obtain the following commutative diagram of graded rings:
        %$$
        %\xymatrix@R=50pt@C=50pt{
        %\bigoplus_{*\in \Z\ \text{graded}}\calN^{\geq *}\widehat{\Prism}^{(1)}_{S/W(R^\flat)}\{*\}
        %\ar[r]^{c\circ \phi_{/A}} \ar[d] & \bigoplus_{*\in \Z\ %\text{graded}}\widehat{\Prism}^{(1)}_{S/W(R^\flat)}\{*\} \ar[d] \\
        %\gr^*_\calN\widehat{\Prism}^{(1)}_{S/W(R^\flat)} \ar[r]^-{c\circ %\gr^*\phi_{/A}} & \bigoplus_{*\in \Z\ \text{graded}}(\widehat{\Prism}^{(1)}_{S/W(R^\flat)}\{*\}/\phi(\ker\theta)).
        %}
        %$$
    \end{itemize}
\end{thm}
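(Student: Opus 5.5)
This statement is the absolute version of the BMS/BS comparison, so the plan is to reduce it to the computations of \cite[\S 7.2]{BMS19} and \cite[\S 13]{BS22}. We treat the five parts in turn.

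\emph{Step 1: reduce to absolute $\THH$.} By Corollary \ref{cor:spherical-perfect}, applied to $E=\Sphere_{W(R^\flat)}$ (whose mod $p$ reduction is the perfect ring $R^\flat$), the map $\THH(S)^\wedge_p\to\THH(S/\Sphere_{W(R^\flat)})$ is an equivalence. It is also $\T$-equivariant and compatible with the cyclotomic Frobenius by Lemma \ref{lem:cyclotomic-base}, because $\Sphere\to\Sphere_{W(R^\flat)}$ is a morphism of cyclotomic bases. The same equivalence then holds for $\TC^-$ and $\TP$ after $p$-completion. Hence all five statements may be proved for $\THH(S)^\wedge_p$, $\TC^-(S)^\wedge_p$ and $\TP(S)^\wedge_p$.

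\emph{Step 2: evenness and the homotopy fixed point spectral sequences.}
\begin{itemize}
\item[(i)] For quasiregular semiperfectoid $S$, \cite[Theorem 7.1]{BMS19} computes $\pi_*\THH(S;\Z_p)$ by base change from $\THH(R;\Z_p)=R[u]$ with $u$ in degree $2$.
\item[(ii)] The map $S\otimes^L_R S\to S$ has cotangent complex $L\Omega^1_{S/R}[1]$, which is $p$-completely flat in degree $0$. Consequently $\pi_*\THH(S/R)$ is $p$-completely the divided power algebra on $\pi_1 L_{S/R}$, placed in even degrees.
\item[(iii)] Combining (i) and (ii) via a filtration argument shows that $\pi_*\THH(S)^\wedge_p$ is even.
\item[(iv)] The homotopy fixed point and Tate spectral sequences for $\TC^-$ and $\TP$ then collapse. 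So $\TC^-$ and $\TP$ are even, and their $\pi_0$ carry complete abutment filtrations whose graded pieces are $\pi_{2i}\THH$.
\end{itemize}

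\emph{Step 3: identify $\pi_0\TC^-$ with $\widehat{\Prism}$, then deduce (1)--(5).} This is the main obstacle.
\begin{itemize}
\item[(i)] Follow \cite[\S 13]{BS22}: both $\pi_0\TC^-(S;\Z_p)$ and $\widehat{\Prism}^{(1)}_S$ receive maps from the initial object of the prismatic site. For $\TC^-$, this is the map $\pi_0\TP(S;\Z_p)$ with Frobenius $\varphi$, using that $\TP_0$ is a prism over $A_{\inf}(R)$.
\item[(ii)] Show that $(\pi_0\TP,\ker(\pi_0\TP\to S))$ is the initial prism, by comparing with $\Prism_S$ via the universal property.
\item[(iii)] Match the Nygaard filtration with the abutment filtration. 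On graded pieces, both sides are $\Fil^\cj_{\leq n}\overline{\Prism}_S\{n\}$, by Notation \ref{notation:relative-prismatic-cohomology} (f) on the prismatic side and by Step 2 on the $\THH$ side.
\item[(iv)] Nygaard completeness on one side and completeness of the abutment filtration on the other then upgrade this graded comparison to (1) and (2).
\item[(v)] For $\TP$, invert the Bott-type class to obtain (3). Then (4) follows because $\can$ is the inclusion of the abutment filtration.
\item[(vi)] For (5), note that $\varphi^{h\T}$ is $\phi$-semilinear on $\pi_0$. It sends $\calN^{\geq i}$ into $I^i$-divisible elements, which gives the description $c\circ\phi_{/W(R^\flat)}$ together with the Breuil--Kisin twist bookkeeping of \cite[\S 7.4]{BMS19}.
\end{itemize}
Throughout, the Breuil--Kisin twists $\{*\}$ should be handled through the trivialization over $A_{\inf}$, which is not canonical in general. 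This bookkeeping is where the proof needs the most care.
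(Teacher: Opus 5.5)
Your Step 1 matches half of the paper's proof. The paper also identifies $\THH(S)^\wedge_p$ with $\THH(S/\Sphere_{W(R^\flat)})$ (via Example \ref{ex:spherical-Witt-vectors}, equivalently Corollary \ref{cor:spherical-perfect}), and then simply cites \cite[$\S$ 7.2]{BMS19} and \cite[$\S$ 13]{BS22}.

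\emph{The missing translation.} The other half of the paper's proof is on the prismatic side, and your proposal never supplies it. The cited results compare $\TC^-(S;\Z_p)$ with the \emph{absolute} Nygaard-completed $\widehat{\Prism}_S$, its absolute Nygaard filtration and its Frobenius $\phi$. The statement is instead phrased in terms of the Frobenius-twisted \emph{relative} objects $\calN^{\geq\star}\widehat{\Prism}^{(1)}_{S/W(R^\flat)}\{*\}$, the map $c$, and the relative Frobenius $\phi_{/W(R^\flat)}$. Your Step 3 switches between these without comment:
\begin{itemize}
\item in (i)--(ii) you use the absolute initial prism;
\item in (iii) you use the relative Notation \ref{notation:relative-prismatic-cohomology}(f);
\item in (vi) you use $c\circ\phi_{/W(R^\flat)}$.
\end{itemize}
As a result, (5) in particular is asserted rather than derived.

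The missing idea is that $(W(R^\flat),\ker\theta)$ is a perfect prism. Every prism over $S$ receives a unique map from it, so $\Prism_S\simeq\Prism_{S/W(R^\flat)}$. Since $\phi$ is bijective on $W(R^\flat)$, the semilinear map $c\colon\Prism_{S/W(R^\flat)}\to\Prism^{(1)}_{S/W(R^\flat)}$ is an isomorphism. It matches the Nygaard filtrations and transports the Breuil--Kisin twists canonically \cite[$\S$ 12, $\S$ 15]{BS22}, so no non-canonical trivialization is needed. Under $c$, the map $c\circ\phi_{/W(R^\flat)}$ corresponds to $\phi_{/W(R^\flat)}\circ c=\phi$. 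This is exactly what turns the cited results into (1)--(5).

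\emph{Problems with the re-derivation.} Steps 2--3 re-derive the cited theorems, which is not needed, and as written Step 3 does not work.
\begin{itemize}
\item In (ii), the map $\pi_0\TP(S;\Z_p)\cong\pi_0\TC^-(S;\Z_p)\to\pi_0\THH(S;\Z_p)=S$ is the edge map. Its kernel is $\calN^{\geq 1}$, which is not a prism ideal in general. In \cite{BMS19, BS22} the prism structure comes from $\pi_0\TP(S;\Z_p)\to\pi_0\THH(S;\Z_p)^{tC_p}$, with $S$ mapping in through $\varphi$. The resulting comparison is with the Nygaard completion $\widehat{\Prism}_S$, not with the initial prism $\Prism_S$.
\item In (iii), knowing that both sides have graded pieces abstractly isomorphic to $\Fil^{\cj}_{\leq n}\overline{\Prism}\{n\}$ proves nothing without a filtered comparison map that induces these isomorphisms. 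Step 2 only gives evenness and the $u$-Bockstein to $\HH(S/R)$. Identifying $\pi_{2n}\THH(S)$ with $\Fil^{\cj}_{\leq n}\overline{\Prism}\{n\}$ is statement (1) itself, so the argument is circular.
\end{itemize}
It is cleaner to cite these results, as the paper does, and spend the effort on the absolute-to-relative translation above.
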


\begin{proof}
    See {\cite[$\S$ 7.2]{BMS19}} and {\cite[$\S$ 13]{BS22}}. Note that the statements here are a little bit awkward and not immdiately those which are presented in {\cite[$\S$ 7.2]{BMS19}} and {\cite[$\S$ 13]{BS22}}. However, since $(W(R^\flat),\ker\theta)$ is a perfect prism, there is a canonical isomorphism 
    $$
    \Prism_S\simeq \Prism_{S/W(R^\flat)}\xrightarrow{\sim}\Prism^{(1)}_{S/W(R^\flat)}:=\Prism_{S/W(R^\flat)}\widehat{\otimes}^L_{W(R^\flat),\phi}W(R^\flat),
    $$
    inducing isomorphisms $\calN^{\geq \star}\widehat{\Prism}_{S}\{*\}\simeq \calN^{\geq \star}\widehat{\Prism}^{(1)}_{S/W(R^\flat)}\{*\}$ (see \cite[$\S$ 12, $\S$ 15]{BS22}). On the other hand, by Example \ref{ex:spherical-Witt-vectors}, the canonical map $\THH(S)^\wedge_p\rightarrow \THH(S/\Sphere_{W(R^\flat)})$ is an equivalence. Therefore the statements of this theorem are canonically isomorphic to those presented in {\cite[$\S$ 7.2]{BMS19}} and {\cite[$\S$ 13]{BS22}}. We state this theorem in this style so that we can use it directly in the proof of Theorem \ref{thm:THH-for-qrsp-relative-polynomail}.
\end{proof}

\section{\texorpdfstring{$\THH$}{THH} for Quasiregular Semiperfectoids Relative to \texorpdfstring{$\Sphere_{W(k)}[x_0,x_1,\ldots,x_n]$}{}}\label{section:THH-for-qrsp-relative-polynomial}

In this section we study relative topological Hochschild homology of the form $\THH(R/\Sphere_{W(k)}[x_0,x_1,\ldots,x_n])$, where $W(k)[x_0,x_1,\ldots,x_n]\rightarrow R$ is relatively quasiregular semiperfectoid in the sense of Definition \ref{defn:qrsp-relative}, which is a restricted version of \cite[Definition 9.2]{AKN23} or \cite[Definition 15.1]{BS22}.

\begin{thm}[\cite{Krause23}]\label{thm:THH-for-qrsp-relative-polynomail}
    Let $k$ be a perfect ring of characteristic $p$, and $\Sphere_{W(k)}$ the corresponding ring of spherical Witt vectors with coefficients in $k$. Set 
    $$
    \B_n:=\Sphere_{W(k)}[x_0,x_1,\ldots,x_n]=\Sphere_{W(k)}\otimes (\Sigma^{\infty}_+\N)^{\otimes n+1};
    $$    
    also, 
    set $B_n=W(k)[x_0,x_1,\ldots,x_n]$ equipped with the $\delta$-structure determined by the canonical $\delta$-structure on $W(k)$ and $\delta(x_i)=0$, $0\leq i\leq n$. Let $R$ be a commutative ring with a relatively quasiregular semiperfectoid map $B_n\rightarrow R$; in other words, the following conditions are satisfied:
    \begin{itemize}
        \item[(i)] The ring $R$ has bounded $p$-power torsion and is $p$-complete. 

        \item[(ii)] The ring homomorphism $B_n\rightarrow  R$ is surjective. 

        \item[(iii)] The ideal $\ker(B_n\rightarrow R)$ contains a distinguished ideal $I$ of $B_n$, and $L\Omega^1_{R/(B_n/I)}\in \rmD(R)$ has $p$-complete Tor-amplitude concentrated in homological degree $[1,1]$.
    \end{itemize} 
    Note that $\THH(R/\B_n)$ is $p$-complete by \cite[Lemma 3.6]{KN22}.
    Now let $B_n^\wedge$ denote the $(p,I)$-completion of $B_n$ and regard $(B_n^\wedge, IB_n^\wedge)$ as a prism. Then the following statements hold.
    \begin{itemize}
        \item[(1)] The homotopy groups of $\THH(R/\B_n)$ are concentrated in even degrees, and there is a canonical isomorphism of graded rings 
        $$
        \pi_{2*} \THH(R/\B_n)\simeq \gr^*_\calN\widehat{\Prism}^{(1)}_{R/B_n^\wedge}.
        $$

        \item[(2)] The homotopy groups of $\TC^-(R/\B_n)$ are concentrated in even degrees, and there is a canonical isomorphism of graded rings 
        $$
        \pi_{2*}\TC^-(R/\B_n)\simeq \bigoplus_{*\in \Z\ \text{graded}}\calN^{\geq *}\widehat{\Prism}^{(1)}_{R/B_n^\wedge}\{*\}.
        $$

        \item[(3)] The homotopy groups of $\TP(R/\B_n)$ are concentrated in even degrees, and there is a canonical isomorphism of graded rings 
        $$
        \pi_{2*}\TP(R/\B_n)\simeq \bigoplus_{*\in \Z\ \text{graded}}\widehat{\Prism}^{(1)}_{R/B_n^\wedge}\{*\}.
        $$

        \item[(4)] Under the identifications in (2) and (3), the canonical map $\can:\TC^-(R/\B_n)\rightarrow \TP(R/\B_n)$ induces on homotopy groups the canonical inclusions of the Nygaard filtration.

        \item[(5)] Under the identifications (2) and (3), the cyclotomic Frobenius $\varphi_p^{h\T}:\TC^-(R/\B_n)\rightarrow \TP(R/\B_n)$ induces on homotopy groups the following map 
        $$
        \bigoplus_{*\in \Z\ \text{graded}}\calN^{\geq *}\widehat{\Prism}^{(1)}_{R/B_n^\wedge}\{*\}\xrightarrow{c\circ\phi_{/B_n^\wedge}}\bigoplus_{*\in \Z\ \text{graded}}\widehat{\Prism}^{(1)}_{R/B_n^\wedge}\{*\},
        $$
        where $c$ and $\phi_{/B_n^\wedge}$ are maps in Notation \ref{notation:relative-prismatic-cohomology}, (e) and (f), respectively.

        %Consider the following commutative diagram:
        %$$
        %\xymatrix@R=50pt@C=50pt{
        %\TC^-(R/\B_n) \ar[r]^{\varphi_p^{h\T}} \ar[d] & %\TP(R/\B_n) \ar[d] \\
        %\THH(R/\B_n) \ar[r]^{\varphi_p} & \THH(R/\B_n)^{tC_p}.
        %}
        %$$
        %With the identifications in (1), (2), and (3), by taking the homotopy groups of the above commutative diagram we obtain the following commutative diagram of graded rings:
        %$$
        %\xymatrix@R=50pt@C=50pt{
        %\bigoplus_{*\in \Z\ \text{graded}}\calN^{\geq %*}\widehat{\Prism}^{(1)}_{R/B_n^\wedge}\{*\} 
        %\ar[r]^{c\circ \phi_{/B_n^\wedge}} \ar[d] & \bigoplus_{*\in \Z\ %\text{graded}}\widehat{\Prism}^{(1)}_{R/B_n^\wedge}\{*\} \ar[d] \\
        %\gr^*_\calN\widehat{\Prism}^{(1)}_{R/B_n^\wedge} \ar[r]^-{c\circ %\gr^*\phi_{/B_n^\wedge}} & \bigoplus_{*\in \Z\ \text{graded}}%(\widehat{\Prism}^{(1)}_{R/B_n^\wedge}\{*\}/\phi(I)).
        %}
        %$$
    \end{itemize}
\end{thm}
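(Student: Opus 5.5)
We argue by descent along the perfection $\B_n\to\B_{n,\perf}$ from Example \ref{ex:polynomial-base-perfection}, as indicated in the introduction. Set $\B_n^{(m)}:=\B_{n,\perf}^{\otimes_{\B_n}(m+1)}$ for $m\geq 0$; this gives a Čech nerve $\B_n\to\B_n^{(\bullet)}$. Put $R^{(m)}:=R\widehat{\otimes}_{B_n}\pi_0\B_n^{(m)}$. First we record that $\B_n\to\B_{n,\perf}$ is faithfully flat on $\pi_*$: indeed $\pi_*\B_{n,\perf}=\pi_*\B_n\otimes_{B_n}B_{n,\perf}$, and $B_n\to B_{n,\perf}$ is $p$-completely faithfully flat because $\phi$ on $B_n$ is faithfully flat. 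Hence $\THH(R/\B_n)\simeq\lim_{\Delta}\THH(R/\B_n)\otimes_{\B_n}\B_n^{(\bullet)}$ after $p$-completion. Note that $\THH(R/\B_n)$ is $p$-complete by \cite[Lemma 3.6]{KN22}.

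Each term is identified by multiplicativity (Remark \ref{rem:THH}): we have $\THH(R/\B_n)\otimes_{\B_n}\B_n^{(m)}\simeq\THH(R^{(m)}/\B_n^{(m)})$, compatibly with the $\T$-action and with the cyclotomic structure via Lemma \ref{lem:cyclotomic-base}. For $m=0$, Corollary \ref{cor:perfection-and-qrsp} shows that $R_\perf$ is quasiregular semiperfectoid, and Corollary \ref{cor:spherical-perfect} identifies $\THH(R_\perf/\B_{n,\perf})\simeq\THH(R_\perf)^\wedge_p$. So Theorem \ref{thm:THH-for-qrsp} gives (1)–(5) for $R_\perf$ relative to the perfect prism $((B_{n,\perf})^\wedge,I)$. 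For $m\geq 1$, the base $B_n^{(m)}$ is a $p$-completely flat $\delta$-algebra over $B_{n,\perf}$; in fact it is $B_{n,\perf}$ tensored with perfections of $\Z_p[x_i^{\pm}]$-type torsors, or simply a perfect-ish ring after $p$-completion. We then apply the same identification term by term, using \cite[Theorem 1.2 (7)]{AKN23} (base change for relative prismatic cohomology, its Nygaard filtration, $c$ and $\phi_{/A}$). This shows that the cosimplicial diagram of homotopy groups is $\gr^*_\calN\widehat{\Prism}^{(1)}_{R/B_n^\wedge}\widehat{\otimes}_{B_n}B_n^{(\bullet)}$. The same holds for $\calN^{\geq*}\{*\}$, and for $\widehat{\Prism}^{(1)}\{*\}$ in the $\TC^-$ and $\TP$ cases.

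Next we check concentration in even degrees and that the descent spectral sequence degenerates. Since $B_n\to B_n^{(\bullet)}$ is a $p$-completely faithfully flat Čech nerve, and the relative prismatic objects are $p$-completely flat base changes, the cosimplicial abelian groups $\pi_{2j}$ have cohomology concentrated in degree $0$, equal to the $B_n$-level object. This is flat descent, using that $\gr^j_\calN$ is $p$-completely flat-compatible with base change. All odd groups vanish termwise. Hence the Bousfield–Kan spectral sequence collapses to the $0$-line and gives (1). Applying $(-)^{h\T}$ and $(-)^{t\T}$ commutes with the totalization, so (2) and (3) follow in the same way, granted the termwise identification. Then (4) and (5) follow by naturality, because $\can$ and $\varphi_p^{h\T}$ are maps of cosimplicial objects, and on the $0$-th term they are the maps in Theorem \ref{thm:THH-for-qrsp}.

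The main obstacle is the termwise identification for $m\geq 1$ and the matching of structures. The ring $B_n^{(m)}$ is not perfect, so Theorem \ref{thm:THH-for-qrsp} does not directly apply. The first thing to try is to avoid higher terms: we use instead that $\pi_*\THH(R_\perf/\B_{n,\perf})\otimes_{B_{n,\perf}}B_n^{(m)}$ computes them via flat base change of $\pi_*$, since $\B_n^{(m)}$ is flat over $\B_{n,\perf}$. We must also check that Theorem \ref{thm:THH-for-qrsp}'s isomorphism intertwines the two coface maps $B_{n,\perf}\rightrightarrows B_n^{(1)}$. For this we use functoriality of Theorem \ref{thm:THH-for-qrsp} in maps of perfectoid bases; where this is awkward, we would pass further to the perfection $B_n^{(1)}_\perf$, which is faithfully flat, and repeat the argument there. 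The Breuil–Kisin twists $\{*\}$ need the same care.
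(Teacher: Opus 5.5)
Your strategy is the paper's: base change along $\B_n\to\B_{n,\perf}$, then Corollary \ref{cor:perfection-and-qrsp}, flat base change of $\pi_*\THH$, the base-change property \cite[Theorem 1.2 (7)]{AKN23}, Theorem \ref{thm:THH-for-qrsp} at the perfect level, and faithfully flat descent. The paper simply says ``flat descent'' at exactly the point you call the main obstacle.

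That point is real. The level-$0$ isomorphism $\alpha\colon \pi_{2*}\THH(R_\perf/\B_{n,\perf})\simeq \gr^*_\calN\widehat{\Prism}^{(1)}_{R_\perf/B_{n,\perf}^\wedge}$ must be a morphism of descent data. That is, its base changes along the two cofaces $d^0,d^1\colon B_{n,\perf}\rightrightarrows B_n^{(1)}=B_{n,\perf}\otimes_{B_n}B_{n,\perf}$ must agree; equivalently, $\alpha$ must carry $\pi_{2*}\THH(R/\B_n)$ onto $\gr^*_\calN\widehat{\Prism}^{(1)}_{R/B_n^\wedge}$ inside the level-$0$ objects. Neither of your proposed ways of checking this works:
\begin{itemize}
\item Naturality in Theorem \ref{thm:THH-for-qrsp} concerns qrsp rings over perfect bases. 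The cofaces land in $\THH(R^{(1)}/\B_n^{(1)})$, which is relative to the non-perfect base $\B_n^{(1)}$, and this is not $\THH(R^{(1)})^\wedge_p$.
\item Your fallback is false. $B_n^{(1)}$ is not ``perfect-ish'': in $B_n^{(1)}/p$ the element $e=x_0^{1/p}\otimes 1-1\otimes x_0^{1/p}$ is nonzero with $e^p=0$. So the ($p$-completed) perfection of $B_n^{(1)}$ is $(B_{n,\perf})^\wedge_p$ via the multiplication map. That map kills $e$, is not flat, and is a common retraction of $d^0$ and $d^1$. Base change along it therefore turns both pullbacks of $\alpha$ into $\alpha$ itself and detects nothing.
\item More generally, there is no $p$-completely faithfully flat map from $B_n^{(1)}$ to a perfect ring. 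Such a map would be injective mod $p$ and would send $e$ to a nonzero nilpotent.
\end{itemize}

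What is missing is an identification that exists at every level of the \v{C}ech nerve and is natural in the cofaces. One option is a comparison between $\pi_*\THH(-/\B)$ (and $\TC^-$) and relative Nygaard-filtered prismatic cohomology built directly for all the spherical monoid bases $\B_n^{(m)}$, as in the general statement announced in \cite{Krause23}. Another is a rigidity argument, via the universal property of the relative prismatic site, showing that the level-$0$ identification is determined by structure that both cofaces preserve. With that in hand, your descent argument goes through.

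A smaller point: $(-)^{t\T}$ does not formally commute with totalizations, since it involves the colimit $(-)_{h\T}$. For (3)--(5) it is safer to argue as the paper does for $\TC^-$: use the homotopy fixed point and Tate spectral sequences, which degenerate by evenness, compare their base-changed $E_2$-pages, and then Nygaard-complete.
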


\begin{proof}
    With Theorem \ref{thm:THH-for-qrsp}, these statements are almost immediate. 
    Let $\B_{n,\perf}=\Sphere_{W(k)}[x_0^{1/p^\infty},\ldots,x_n^{1/p^\infty}]$ and $B_{n,\perf}=W(k)[x_0^{1/p^\infty},\ldots,x_n^{1/p^\infty}]$, and let $B_{n,\perf}^\wedge$ denote the $(p,I)$-completion of $B_{n,\perf}$ and regard $(B_{n,\perf}^\wedge, IB_{n,\perf}^\wedge)$ as a prism. We would argue by descent from Theorem \ref{thm:THH-for-qrsp} along $(B_n^\wedge, IB_n^\wedge)\rightarrow (B_{n,\perf}^\wedge, IB_{n,\perf}^\wedge)$ and $\B_n\rightarrow \B_{n,\perf}$.

    By \cite[Theorem 1.2 (7)]{AKN23}, the prismatic data $\underline{\Prism}_{R/A}$ satisfies base change in $A$, and therefore 
    \begin{equation}\label{equation:prismatic-base-change}
         ((\calN^{\geq \star}\widehat{\Prism}^{(1)}_{R/B_n^\wedge}\{*\})\widehat{\otimes}^L_{B_n}B_{n,\perf})^\wedge_\calN\simeq \calN^{\geq \star}\widehat{\Prism}^{(1)}_{R\widehat{\otimes}^L_{B_n}B_{n,\perf}/B_{n,\perf}^\wedge}\{*\},
    \end{equation}
    where $\widehat{\otimes}^L$ is the derived $p$-completed tensor. Note that $\phi:B_n\rightarrow B_n$ is $p$-completely faithfully flat, and by Corollary \ref{cor:perfection-and-qrsp}, $R\widehat{\otimes}^L_{B_n}B_{n,\perf}$ is a quasiregular semiperfectoid ring. 

    On the other hand, we have 
    %notice that $\THH(R/\B_n)$ is an (non-equivariant) $R$-algebra, 
    \begin{align*}
         (\THH(R/\B_n)\otimes_{\B_n}\B_{n,\perf})^\wedge_p 
        & \simeq \THH(R\otimes^L_{\B_n}\B_{n,\perf}/\B_{n,\perf})^\wedge_p \\
        & \simeq \THH(R\widehat{\otimes}^L_{B_n}B_{n,\perf}/\B_{n,\perf}),
    \end{align*}
    and since $\B_n\rightarrow \B_{n,\perf}$ is flat with $B_n\rightarrow B_{n,\perf}$ being $p$-completely faithfully flat, we have 
    \begin{equation}\label{equation:THH-base-change}
    \pi_*(\THH(R/\B_n))\widehat\otimes^L_{B_n}B_{n,\perf}\simeq \pi_*\THH(R\widehat{\otimes}^L_{B_n}B_{n,\perf}/\B_{n,\perf}).
    \end{equation}
    By Theorem \ref{thm:THH-for-qrsp} (1), (\ref{equation:prismatic-base-change}),(\ref{equation:THH-base-change}), and flat descent, we immediately verify the statement (1).
    
    Next we consider $(\TC^-(R/\B_n)\otimes_{\B_n}\B_{n,\perf})^\wedge_p\rightarrow \TC^-(R\widehat{\otimes}^L_{B_n}B_{n,\perf}/\B_{n,\perf})$. Let $\{E^{*,*}_2\}$ denote the homotopy fixed points spectral sequence for $\TC^-(R/\B_n)$, and let $\{E^{*,*}_{2,\perf}\}$ denote the homotopy fixed points spectral sequence for $\TC^-(R\widehat{\otimes}_{B_n}B_{n,\perf}/\B_{n,\perf})$. By (\ref{equation:THH-base-change}), we have 
    $$
    \{E^{*,*}_2\widehat{\otimes}^L_{B_n}B_{n,\perf}\}\xrightarrow{\sim}\{E^{*,*}_{2,\perf}\}, 
    $$
    and therefore the canonical map 
    $$\pi_*(\TC^-(R/\B_n))\widehat{\otimes}^L_{B_n}B_{n,\perf}\rightarrow \pi_*\TC^-(R\widehat{\otimes}^L_{B_n}B_{n,\perf}/\B_{n,\perf})$$ becomes an isomorphism after Nygaard completion. Together with Theorem \ref{thm:THH-for-qrsp} (2), we can explain the statement (2): for $i\in \Z$, both $\calN^{\geq i}\widehat{\Prism}_{R/B_n^\wedge}^{(1)}\{i\}$ and $\pi_{2i}(\TC^-(R/\B_n))$ are the totalization of the following cosimplicial object 
    $$
    ((\calN^{\geq i}\widehat{\Prism}^{(1)}_{R\widehat{\otimes}^L_{B_n}B_{n,\perf}/B_{n,\perf}^\wedge}\{i\})\widehat{\otimes}^L_{B_n}B_{n,\perf}^{\widehat{\otimes}_{B_n}^L\bullet})^\wedge_\calN.
    $$

    The rest statements of this theorem follow the same logic.
\end{proof}

\begin{rem}\label{rem:THH-and-prismatic-rel-delta-rings}
    By definition, $\THH(R/\B_n)$, $\TC^-(R/\B_n)$, and $\TP(R/\B_n)$ are independent of the choice of a distinguished ideal $I\subset \ker(B_n\rightarrow R)$. In fact, with the theory of prismatic cohomology relative to $\delta$-rings developed in \cite{AKN23}, we can just state that, for example, there is a canonical isomorphism of graded rings    
    $$ 
     \pi_{2*}\TC^-(R/\B_n)\simeq \bigoplus_{*\in \Z}\calN^{\geq *}\widehat{\Prism}^{(1)}_{R/B_n}\{*\}[2*].
    $$
    See \cite[Theorem 1.2]{AKN23} and \cite[Page 49, (\$)]{AKN23}. However, in practice we would usually specify a choice of a distinguished ideal $I\subset \ker(B_n\rightarrow R)$ and make use of the corresponding conjugate filtration (see Notation \ref{notation:relative-prismatic-cohomology} (c), and also \cite[Warning 3.22]{AKN23}). 
\end{rem}

\begin{rem}\label{rem:THH-and-qrsp-rel-AKN}
    Theorem \ref{thm:THH-for-qrsp-relative-polynomail} is certainly not stated in its most theoretical generality. With \cite[Definition 9.2]{AKN23} instead of Definition \ref{defn:qrsp-relative}, in Theorem \ref{thm:THH-for-qrsp-relative-polynomail} we do not need to assume that $B_n\rightarrow R$ is surjective. Further, as it is announced in \cite[$\S$ 1.3]{AKN23}, there would be motivic filtrations (by using \cite[Proposition 9.14]{AKN23} and the idea of \cite[$\S$ 7.3]{BMS19}) for
    $\TC^-(-/\B_n)^\wedge_p$, $\TP(-/\B_n)^\wedge_p$, and $\TC(-/\B_n)^\wedge_p$ as functors defined on $\QSyn_{\B_n}$, where $\QSyn_{\B_n}$ is defined in \cite[Definition 9.1]{AKN23}: for example, for any $R\in \QSyn_{A}$, $\TC^-(R/\B_n)^\wedge_p$ admits a functorial complete and exhaustive descending multiplicative $\Z$-indexed filtration with $\gr^i\TC^-(R/\B_n)^\wedge_p\simeq \calN^{\geq i}\widehat{\Prism}^{(1)}_{R/\B_n}\{i\}[2i]$. A more advanced statement allowing bases other than spectra of the form $\B_n$ has been announced in \cite{Krause23}.
\end{rem}

We would like to apply Theorem \ref{thm:THH-for-qrsp-relative-polynomail} in the following more concrete situation. 

\begin{cor}\label{cor:THH-for-qrsp-relative-polynomail}
    Let $k$ be a perfect ring of characteristic $p$, and $\Sphere_{W(k)}$ the corresponding ring of spherical Witt vectors with coefficients in $k$. Set 
    $$
    \B_n:=\Sphere_{W(k)}[x_0,x_1,\ldots,x_n]=\Sphere_{W(k)}\otimes (\Sigma^{\infty}_+\N)^{\otimes n+1};
    $$    
    also, 
    set $B_n=W(k)[x_0,x_1,\ldots,x_n]$ equipped with the $\delta$-structure determined by the canonical $\delta$-structure on $W(k)$ and $\delta(x_i)=0$, $0\leq i\leq n$. Let $I=(d)\subset B_n$ be a distinguished principal ideal and let $B_n^\wedge$ denote the $(p,I)$-completion of $B_n$, and let $R=B_{n}/(I,f_1,\ldots,f_r)$ where $f_1,\ldots,f_r\in B_n$ give a Koszul regular sequence in $\overline{B}_n:=B_{n}/I$, and let $\overline{J}=(I,f_1,\ldots,f_n)/I$. Then the following statements hold.
    \begin{itemize}
        \item[(1)] The homotopy groups of $\THH(\overline{B}_n/\B_n)$ are concentrated in even degrees, and there is a canonical isomorphism of graded rings 
        $$
        \pi_{2*} (\THH(\overline{B}_n/\B_n))\simeq \overline{B}_n[u],
        $$
        where $u\in \pi_{2} (\THH(\overline{B}_n/\B_n))=I/I^2$ is a generator.
        
        \item[(2)] There is a $\T$-equivariant cofiber sequence 
        $$
        \THH(R/\B_n)[2]\xrightarrow{u}\THH(R/\B_n)\rightarrow \THH(R/\overline{B}_n),
        $$
        inducing an exact sequence of graded groups 
        $$
        0\rightarrow \pi_{*-2}\THH(R/\B_n)\xrightarrow{u} \pi_{*}\THH(R/\B_n)\rightarrow \pi_*\THH(R/\overline{B}_n)\rightarrow 0.
        $$
        Note that $\pi_{2*}\THH(R/\overline{B}_n)\simeq R\langle t'_{1},\ldots,t'_{r}\rangle$, where $R\langle t'_{1},\ldots,t'_{r}\rangle$ is the divided power polynomial on $\{t_{i}=f_i+\overline{J}^2\in \pi_2\THH(R/\overline{B}_n)=\overline{J}/\overline{J}^2\}_{1\leq i\leq r}$.

        \item[(3)] The homotopy groups of $\TC^-(R/\B_n)$ are concentrated in even degrees, and there is a canonical isomorphism of graded rings 
        $$
        \pi_{2*}\TC^-(R/\B_n)\simeq \bigoplus_{*\in \Z\ \text{graded}}\calN^{\geq *}B_n^\wedge\{\frac{\phi(f_1)}{\phi(d)},\ldots,\frac{\phi(f_r)}{\phi(d)}\}^\wedge_{(p,\calN)}\{*\}.
        $$

        \item[(4)] The homotopy groups of $\TP(R/\B_n)$ are concentrated in even degrees, and there is a canonical isomorphism of graded rings 
        $$
        \pi_{2*}\TP(R/\B_n)\simeq \bigoplus_{*\in \Z\ \text{graded}}B^\wedge_n\{\frac{\phi(f_1)}{\phi(d)},\ldots,\frac{\phi(f_r)}{\phi(d)}\}^\wedge_{(p,\calN)}\{*\}.
        $$
    \end{itemize}
\end{cor}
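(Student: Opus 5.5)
We treat the four statements in the order (1), (2), (3)--(4). Parts (3) and (4) follow from Theorem \ref{thm:THH-for-qrsp-relative-polynomail} once we identify the relative prismatic cohomology with a prismatic envelope. Parts (1) and (2) are a direct computation.

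For (1), we apply Theorem \ref{thm:THH-for-qrsp-relative-polynomail}(1) to the relatively quasiregular semiperfectoid map $B_n\to\overline{B}_n$. Here $L\Omega^1_{\overline{B}_n/(B_n/I)}=0$, and $\overline B_n$ is $p$-complete with bounded $p$-torsion because $(d)$ is distinguished; strictly, one should pass to $(p,I)$-completions. Then $\widehat{\Prism}^{(1)}_{\overline{B}_n/B_n^\wedge}\simeq B_n^\wedge$, and its Nygaard filtration is the $\phi(d)$-adic, or equivalently $I$-adic after untwisting, filtration. Hence
\[
\gr^*_\calN\simeq\bigoplus_j I^j/I^{j+1}\simeq\overline{B}_n[u],
\]
with $u$ a generator of $I/I^2$. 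Alternatively, one can avoid the theorem: $\THH(\overline B_n/\B_n)\otimes_{\B_n}\B_{n,\perf}$ is $\THH$ of the perfectoid ring $\overline B_{n,\perf}$, so by B\"okstedt periodicity it is polynomial on a class in $\pi_2$; flat descent along $B_n\to B_{n,\perf}$ then gives (1).

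For (2), we use the multiplicativity in Remark \ref{rem:THH}. This gives
\[
\THH(R/\overline B_n)\simeq \THH(R/\B_n)\otimes_{\THH(\overline B_n/\B_n)}\overline B_n .
\]
By (1), $\overline B_n\simeq \THH(\overline B_n/\B_n)/u$, since $u$ is a non-zero-divisor on a polynomial ring concentrated in even degrees. To make this $\T$-equivariant, we take the cofiber of $u\colon \Sigma^2\THH(\overline B_n/\B_n)\to\THH(\overline B_n/\B_n)$ as an equivariant module; the cofiber is equivalent to the augmentation $\THH(\overline B_n/\B_n)\to\overline B_n$ of Remark \ref{rem:THH}, which is $\T$-equivariant. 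Base changing to $\THH(R/\B_n)$ yields the cofiber sequence. Theorem \ref{thm:THH-for-qrsp-relative-polynomail}(1) shows that $\pi_*\THH(R/\B_n)$ is even, so the long exact sequence splits into short exact sequences. Finally, $\pi_*\THH(R/\overline B_n)$ for the quasiregular quotient $R=\overline B_n/(f_1,\dots,f_r)$ is computed by the HKR filtration: $\THH(R/\overline{B}_n)\simeq \HH(R/\overline B_n)$ here, because $\THH(\overline B_n)\to\overline B_n$ after $p$-completion plays the role of $\Z$. The cotangent complex $L_{R/\overline B_n}\simeq \overline J/\overline J^2[1]$ is free of rank $r$, so we get $\Gamma_R(\overline J/\overline J^2)$ in even degrees.

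For (3) and (4), we apply Theorem \ref{thm:THH-for-qrsp-relative-polynomail}(2)(3) and identify the prismatic cohomology. Since $f_i$ is Koszul regular in $B_n/I$, the relative prismatic cohomology $\Prism_{R/B_n^\wedge}$ is the prismatic envelope $B_n^\wedge\{f_i/d\}^\wedge$ (\cite[Prop.~3.13, Example 7.9]{BS22}). Its Frobenius twist, obtained by base change along the flat map $\phi$, is $B_n^\wedge\{\phi(f_i)/\phi(d)\}^\wedge$. Nygaard-completing then gives the stated rings.

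The main obstacle is the equivariant cofiber sequence in (2) together with its compatibility with completions. The rest is quoting known identifications.
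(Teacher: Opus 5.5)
Your proposal is correct and follows essentially the paper's route. Parts (1), (3), (4) come from Theorem~\ref{thm:THH-for-qrsp-relative-polynomail} plus the identification of $\widehat{\Prism}^{(1)}_{R/B_n^\wedge}$ with the Nygaard-completed envelope $B_n^\wedge\{\frac{\phi(f_1)}{\phi(d)},\ldots,\frac{\phi(f_r)}{\phi(d)}\}$ (the paper cites \cite[Theorem 9.6]{AKN23} where you use the envelope description and base change along $\phi$), and your argument for (2) via $\THH(R/\B_n)\otimes_{\THH(\overline{B}_n/\B_n)}\overline{B}_n\simeq\THH(R/\overline{B}_n)$ plus HKR is the adaptation of \cite[Theorem 6.7]{BMS19} that the paper invokes. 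A few points to tighten: the Nygaard filtration on $\widehat{\Prism}^{(1)}_{\overline{B}_n/B_n^\wedge}\simeq B_n^\wedge$ is the $d$-adic one, not $\phi(d)$-adic (a $\phi(d)$-adic filtration would give $(B_n^\wedge/\phi(d))[u]$); $\T$-equivariance of multiplication by $u$ comes from lifting $u$ to $\pi_2\TC^-(\overline{B}_n/\B_n)$ (possible by evenness) and using $\pi_2((\overline{B}_n)^{h\T})=0$ to map the cofiber to $\overline{B}_n$; $\THH(R/\overline{B}_n)=\HH(R/\overline{B}_n)$ holds by definition since $\overline{B}_n$ is discrete; and injectivity of $u$ needs the evenness of $\THH(R/\overline{B}_n)$ from HKR, not just that of $\THH(R/\B_n)$.
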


\begin{proof}
    The statements $(1)$, $(3)$ and $(4)$ are immediate by Theorem \ref{thm:THH-for-qrsp-relative-polynomail} and \cite[Theorem 9.6]{AKN23}. The statement (2) is essentially \cite[Theorem 6.7]{BMS19}, and by HKR-filtration we have $\pi_{2*}\THH(R/\overline{B}_n)\simeq R\langle t_{f_1},\ldots,t_{f_r}\rangle$, where $R\langle t_{f_1},\ldots,t_{f_r}\rangle$.
\end{proof}

\begin{rem}\label{rem:trivialing-Breuil-Kisin-twists}
    In Corollary \ref{cor:THH-for-qrsp-relative-polynomail}, since the distinguished ideal $I=(d)$ is principal, the Breuil-Kisin twists can be trivialized, and the cyclotomic Frobenius $\varphi_p^{h\T}:\TC^-(R/\B_n)\rightarrow \TP(R/\B_n)$ induces on homotopy groups (up to multiplication by units due to trivialization) the following composition: 
        $$
        \pi_{2i}\varphi_p^{h\T}: \calN^{\geq i}\widehat{\Prism}^{(1)}_{R/B_n^\wedge}\xrightarrow{\frac{1}{d^i}\cdot-} \Prism_{R/B_n^\wedge}
        \xrightarrow{c}\widehat{\Prism}^{(1)}_{R/B_n^\wedge}.
        $$
\end{rem}

\section{Remarks on Prismatic Envelopes}\label{section:remarks-on-prismatic-envelopes}

In Corollary \ref{cor:THH-for-qrsp-relative-polynomail} we express the prismatic cohomology data through prismatic envelopes (see, for example \cite[Proposition 3.13]{BS22} and \cite[$\S$ 3]{AKN24}). A prismatic envelope is in general quite complicated but its generators and relations can be recursively determined; see \cite[Proposition 3.8]{AKN24} and also \cite[$\S$ 3]{LW22}. Let us recall some important elements of a prismatic envelope.

\begin{construction}[{\cite[Pages 872-873]{LW22}}, {\cite[$\S$ 3.4]{AKN23}}]\label{construction:generators-for-envelopes}
    Let $(A,I=(d))$ be a bounded orientable prism, and let $R$ be an $A$-algebra of the form $A/(d,f_1,\ldots,f_r)$, where the images of $f_1,\ldots,f_r$ form a $p$-completely Koszul-regular sequence in $\overline{A}=A/(d)$. We specify the following elements.
    \begin{itemize}
        \item[(a)] Since $A\{\frac{f_1}{d},\ldots,\frac{f_r}{d}\}^\wedge_{(p,d)}$ is a $\delta$-ring, there are elements $\delta^k(\frac{f_i}{d})$, $1\leq i\leq r$, $k\geq 0$.

        \item[(b)] For $1\leq i\leq r$, we define the set of elements $\{f_i^{(k)}\}_{k\geq 0}\subset A\{\frac{\phi(f_1)}{\phi(d)},\ldots,\frac{\phi(f_r)}{\phi(d)}\}^\wedge_{(p,\calN)}$ inductively by setting $f_i^{(0)}=f_i\in A\rightarrow A\{\frac{\phi(f_1)}{\phi(d)},\ldots,\frac{\phi(f_r)}{\phi(d)}\}^\wedge_{(p,\calN)}$, and 
        $$
        f_i^{(k+1)}=\delta(f_i^{(k)})-\delta^k(\frac{\phi(f_i)}{\phi(d)})\delta(d^{p^k})\in A\{\frac{\phi(f_1)}{\phi(d)},\ldots,\frac{\phi(f_r)}{\phi(d)}\}^\wedge_{(p,\calN)}.
        $$
    \end{itemize}
\end{construction}

\begin{lem}\label{lem:prismatic-envelope-Nygaard-generators}
    Let $(A,I=(d))$ be a bounded orientable prism, and let $R$ be an $A$-algebra of the form $A/(d,f_1,\ldots,f_r)$, where the images of $f_1,\ldots,f_r$ form a $p$-completely Koszul-regular sequence in $\overline{A}=A/(d)$, and let $\{f_i^{(k)}\}_{k\geq 0}\subset A\{\frac{\phi(f_1)}{\phi(d)},\ldots,\frac{\phi(f_r)}{\phi(d)}\}^\wedge_{(p,\calN)}$ be determined as in Construction \ref{construction:generators-for-envelopes}. Consider the inclusion (see \cite[Lemma 3.14]{AKN24})
    $$
    A\{\frac{\phi(f_1)}{\phi(d)},\ldots,\frac{\phi(f_r)}{\phi(d)}\}^\wedge_{(p,\calN)}\subset A\{\frac{f_1}{d},\ldots,\frac{f_r}{d}\}^\wedge_{(p,d)}.
    $$
    Then we have 
    $$
    f_i^{(k)}=d^{p^k}\delta^k(\frac{f_i}{d})\in A\{\frac{f_1}{d},\ldots,\frac{f_r}{d}\}^\wedge_{(p,d)},\quad \quad k\geq 0,1\leq i\leq r,
    $$
    and therefore %by definition of the relative Nygaard filtration, we have
    $$
        f_i^{(k)}\in \calN^{\geq p^k}A\{\frac{\phi(f_1)}{\phi(d)},\ldots,\frac{\phi(f_r)}{\phi(d)}\}^\wedge_{(p,\calN)},\quad k\geq 0,1\leq i\leq r.
    $$
\end{lem}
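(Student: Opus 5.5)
Inside the $\delta$-ring $C:=A\{\frac{f_1}{d},\ldots,\frac{f_r}{d}\}^\wedge_{(p,d)}$, set $g_i^{(k)}:=d^{p^k}\delta^k(\frac{f_i}{d})$. The plan is to show by induction on $k$ that $f_i^{(k)}=g_i^{(k)}$, using only the $\delta$-ring axioms in $C$. For this to make sense, the elements $\delta^k(\frac{\phi(f_i)}{\phi(d)})$ appearing in Construction \ref{construction:generators-for-envelopes} must be identified with their images in $C$. The inclusion of \cite[Lemma 3.14]{AKN24} is a map of $\delta$-rings sending $\frac{\phi(f_i)}{\phi(d)}$ to $\phi(\frac{f_i}{d})$, which is legitimate because $\phi(d)\cdot\phi(\frac{f_i}{d})=\phi(f_i)$ and $\phi(d)$ is a nonzerodivisor. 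Since $\delta$ commutes with $\phi$, we get $\delta^k(\frac{\phi(f_i)}{\phi(d)})=\phi(\delta^k(\frac{f_i}{d}))$ in $C$.

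The base case $k=0$ is the identity $f_i=d\cdot\frac{f_i}{d}$. For the inductive step, write $y=\delta^k(\frac{f_i}{d})$ and $D=d^{p^k}$, and assume $f_i^{(k)}=Dy$. The product rule for $\delta$ gives
$$
\delta(Dy)=D^p\delta(y)+y^p\delta(D)+p\,\delta(D)\delta(y).
$$
Substituting $\phi(y)=y^p+p\delta(y)$ gives $y^p\delta(D)+p\delta(D)\delta(y)=\phi(y)\delta(D)$, so
$$
\delta(Dy)=D^p\delta(y)+\phi(y)\delta(D).
$$
Therefore
$$
f_i^{(k+1)}=\delta(f_i^{(k)})-\phi(y)\,\delta(d^{p^k})=d^{p^{k+1}}\delta^{k+1}\Bigl(\frac{f_i}{d}\Bigr),
$$
which completes the induction. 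Because the inclusion is injective, this identity in $C$ pins down the element in the subring.

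For the Nygaard statement, I would use the description of the relative Nygaard filtration on the Frobenius-twisted envelope $A\{\frac{\phi(f_1)}{\phi(d)},\ldots\}^\wedge_{(p,\calN)}$ from \cite[Theorem 9.6]{AKN23} and \cite[$\S$ 3]{AKN24}. In the orientable case this filtration is $\calN^{\geq m}=\{x : \phi_{/A}(x)\in d^m\Prism\}$, where $\phi_{/A}$ is identified with the inclusion above. Since $f_i^{(k)}$ maps to $d^{p^k}\cdot(\text{element of }C)$, we get $f_i^{(k)}\in\calN^{\geq p^k}$.

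The main obstacle is this last step. One must make precise that, after the Frobenius twist, the Nygaard filtration on the envelope is the preimage of the $d$-adic filtration under the embedding of \cite[Lemma 3.14]{AKN24}, including the compatibility with the $(p,\calN)$-completion. I expect this to be exactly the content of \cite[Lemma 3.14]{AKN24} together with Notation \ref{notation:relative-prismatic-cohomology}(f). The $\delta$-computation itself is routine.
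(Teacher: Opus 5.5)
Your proof is correct and follows the paper's argument. The paper uses the same induction on $k$, with the product rule $\delta(ab)=a^p\delta(b)+b^p\delta(a)+p\,\delta(a)\delta(b)$ and $\delta^k(\frac{\phi(f_i)}{\phi(d)})=\phi(\delta^k(\frac{f_i}{d}))$ in $A\{\frac{f_1}{d},\ldots,\frac{f_r}{d}\}^\wedge_{(p,d)}$, and then appeals to \cite[Lemma 3.14]{AKN24} for the membership in $\calN^{\geq p^k}$. You just spell out two points the paper uses silently: that the inclusion is a $\delta$-map sending $\frac{\phi(f_i)}{\phi(d)}$ to $\phi(\frac{f_i}{d})$, and that the Nygaard filtration is the preimage of the $d$-adic filtration.
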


\begin{proof}
    We argue by induction on $k\geq 0$. When $k=0$, the statement is immediate. Assume that the statement holds for some $k\geq 0$. Then we have 
    \begin{align*}
        f_{i}^{(k+1)} 
        & = \delta(f^{(k)})-\delta^k(\frac{\phi(f_i)}{\phi(d)})\delta(d^{p^k}) \\
        & = \delta(d^{p^k}\delta^k(\frac{f_i}{d}))-\phi(\delta^k(\frac{f_i}{d}))\delta(d^{p^k}) \\
        & = \delta(d^{p^k})(\delta^k(\frac{f_i}{d}))^p+d^{p^{k+1}}\delta^{k+1}(\frac{f_i}{d})+p\delta(d^{p^k})\delta^{k+1}(\frac{f_i}{d})
        \\ &\quad -(\delta^k(\frac{f_i}{d}))^p\delta(d^{p^k})-p\delta^{k+1}(\frac{f_i}{d})\delta(d^{p^k}) \\
        & = d^{p^{k+1}}\delta^{k+1}(\frac{f_i}{d}).
    \end{align*}

    By \cite[Lemma 3.14]{AKN24}, we have 
     $$
        f_i^{(k)}\in \calN^{\geq p^k}A\{\frac{\phi(f_1)}{\phi(d)},\ldots,\frac{\phi(f_r)}{\phi(d)}\}^\wedge_{(p,\calN)},\quad k\geq 0,1\leq i\leq r.
    $$
\end{proof}

\begin{lem}[{\cite[Proposition 3.23]{LW22}}]\label{lem:Nygaard-generators-divided-power}
    Let $(A,I=(d))$ be a bounded orientable prism, and let $R$ be an $A$-algebra of the form $A/(d,f_1,\ldots,f_r)$, where the images of $f_1,\ldots,f_r$ form a $p$-completely Koszul-regular sequence in $\overline{A}=A/(d)$, and let $\{f_i^{(k)}\}_{k\geq 0}\subset A\{\frac{\phi(f_1)}{\phi(d)},\ldots,\frac{\phi(f_r)}{\phi(d)}\}^\wedge_{(p,\calN)}$ be determined as in Construction \ref{construction:generators-for-envelopes}. Then 
    $$
    pf_i^{(k+1)}=-(f_i^{(k)})^p+\delta^k(\frac{\phi(f_i)}{\phi(d)})d^{p^{k+1}}, \quad k\geq 0, 1\leq i\leq r.
    $$
\end{lem}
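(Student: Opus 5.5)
We verify the identity inside the larger ring $A\{\frac{f_1}{d},\ldots,\frac{f_r}{d}\}^\wedge_{(p,d)}$, using the inclusion of \cite[Lemma 3.14]{AKN24} already invoked in Lemma \ref{lem:prismatic-envelope-Nygaard-generators}. Since that inclusion is injective, an identity there descends to $A\{\frac{\phi(f_1)}{\phi(d)},\ldots,\frac{\phi(f_r)}{\phi(d)}\}^\wedge_{(p,\calN)}$. The key input is the closed formula of Lemma \ref{lem:prismatic-envelope-Nygaard-generators}: for all $k\geq 0$,
$$
f_i^{(k)}=d^{p^k}\delta^k\Bigl(\frac{f_i}{d}\Bigr).
$$
This converts the statement into an identity between $\delta$-ring expressions in the elements $d$ and $y_k:=\delta^k(f_i/d)$.

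Next, I would rewrite the term $\delta^k(\frac{\phi(f_i)}{\phi(d)})$. Since $\frac{\phi(f_i)}{\phi(d)}=\phi(\frac{f_i}{d})$ and $\phi$ commutes with $\delta$ in any $\delta$-ring, we get
$$
\delta^k\Bigl(\frac{\phi(f_i)}{\phi(d)}\Bigr)=\phi(y_k)=y_k^p+p\,y_{k+1}.
$$
This is the same manipulation already used in the inductive step of the proof of Lemma \ref{lem:prismatic-envelope-Nygaard-generators}.

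The computation is then immediate. The right-hand side of the statement equals
$$
-(d^{p^k}y_k)^p+(y_k^p+p\,y_{k+1})\,d^{p^{k+1}}=p\,d^{p^{k+1}}y_{k+1}.
$$
By Lemma \ref{lem:prismatic-envelope-Nygaard-generators} applied at $k+1$, this is exactly $p f_i^{(k+1)}$.

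The only point needing care is the justification that the identity $\phi(f_i/d)=\phi(f_i)/\phi(d)$ and the compatibility $\phi\circ\delta=\delta\circ\phi$ hold inside the completed envelope. They do, because the envelope is a $\delta$-ring, $\phi(d)$ is a nonzerodivisor there, and the inclusion is a map of $\delta$-rings. I expect no real obstacle beyond this bookkeeping.

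As an alternative cross-check, one could instead argue directly from the recursive definition in Construction \ref{construction:generators-for-envelopes}. Using $p\delta(x)=\phi(x)-x^p$, we have $p f_i^{(k+1)}=\phi(f_i^{(k)})-(f_i^{(k)})^p-\delta^k(\frac{\phi(f_i)}{\phi(d)})(\phi(d^{p^k})-d^{p^{k+1}})$. The statement then reduces to $\phi(f_i^{(k)})=\delta^k(\frac{\phi(f_i)}{\phi(d)})\phi(d)^{p^k}$, which again follows from the closed formula.
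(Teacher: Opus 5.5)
Your proposal is correct. The main computation $-(d^{p^k}y_k)^p+(y_k^p+p\,y_{k+1})\,d^{p^{k+1}}=p\,d^{p^{k+1}}y_{k+1}=p f_i^{(k+1)}$ holds in $A\{\frac{f_1}{d},\ldots,\frac{f_r}{d}\}^\wedge_{(p,d)}$ and descends along the injective $\delta$-map of \cite[Lemma 3.14]{AKN24}. Your ``alternative cross-check'' is exactly the paper's proof: expand $p\delta$ as $\phi(x)-x^p$ in the recursive definition, then kill $\phi(f_i^{(k)})-\delta^k(\frac{\phi(f_i)}{\phi(d)})\phi(d^{p^k})$ using Lemma \ref{lem:prismatic-envelope-Nygaard-generators}; your main route differs only in applying that lemma at level $k+1$ rather than level $k$.
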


\begin{proof}
    We have 
        \begin{align*}
            pf_i^{(k+1)} & =p\delta(f_i^{(k)})-p\delta^k(\frac{\phi(f_i)}{\phi(d)})\delta(d^{p^k}) \\
            & = (\phi(f_i^{(k)})-(f_i^{(k)})^p)-\delta^k(\frac{\phi(f_i)}{\phi(d)})(\phi(d^{p^k})-d^{p^{k+1}}) \\
            & = -(f_i^{(k)})^p+\delta^k(\frac{\phi(f_i)}{\phi(d)})d^{p^{k+1}}+\phi(f_i^{(k)})-\delta^k(\frac{\phi(f_i)}{\phi(d)})\phi(d^{p^k}), 
        \end{align*}
    and by Lemma \ref{lem:prismatic-envelope-Nygaard-generators}, we have $\phi(f_i^{(k)})-\delta^k(\frac{\phi(f_i)}{\phi(d)})\phi(d^{p^k})=0$.
\end{proof}

\begin{lem}\label{lem:transversal-envelope}
    Let $(A,I=(d))$ be a bounded orientable prism, and let $R$ be an $A$-algebra of the form $A/(d,f_1,\ldots,f_r)$, where the images of $f_1,\ldots,f_r$ form a $p$-completely Koszul-regular sequence in $\overline{A}=A/(d)$. Assume further that $R$ is $p$-torsion-free, then $A\{\frac{f_1}{d},\ldots,\frac{f_r}{d}\}^\wedge_{(p,d)}/d$ is $p$-torsion-free; in other words, $(A\{\frac{f_1}{d},\ldots,\frac{f_r}{d}\}^\wedge_{(p,d)},dA\{\frac{f_1}{d},\ldots,\frac{f_r}{d}\}^\wedge_{(p,d)})$ is a transversal prism (in the sense of \cite[Definition 2.1.3]{BL22}).
\end{lem}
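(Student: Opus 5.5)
The plan is to identify $A\{\frac{f_1}{d},\ldots,\frac{f_r}{d}\}^\wedge_{(p,d)}/d$ with the Hodge--Tate cohomology of $R$ relative to $(A,(d))$, and then read off $p$-torsion-freeness from the conjugate filtration. Write $P:=A\{\frac{f_1}{d},\ldots,\frac{f_r}{d}\}^\wedge_{(p,d)}$. Since $f_1,\ldots,f_r$ are $p$-completely Koszul-regular in $\overline{A}$, \cite[Proposition 3.13]{BS22} shows that $P$ is the prismatic envelope and that it is $(p,d)$-completely flat over $A$; in particular $d$ is a nonzerodivisor on $P$. Moreover, by the same result together with \cite[Theorem 9.6]{AKN23}, $\Prism_{R/A}\simeq P$ (concentrated in degree $0$). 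Hence $P/d\simeq \overline{\Prism}_{R/A}$, using $\overline{\Prism}_{R/A}=\Prism_{R/A}\otimes^L_A A/d$.

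Next I use the conjugate filtration of Notation \ref{notation:relative-prismatic-cohomology}(c). It is an exhaustive increasing filtration of $\overline{\Prism}_{R/A}$ with graded pieces
$$\gr^\cj_n\overline{\Prism}_{R/A}\simeq L\widehat{\Omega}^n_{R/\overline{A}}[-n]\{-n\}.$$
Since $R=\overline{A}/(f_1,\ldots,f_r)$ with a Koszul-regular sequence, $L_{R/\overline{A}}\simeq (J/J^2)[1]$, where $J/J^2$ is a finite free $R$-module of rank $r$. Therefore $L\widehat{\Omega}^n_{R/\overline{A}}[-n]\simeq \Gamma^n_R(J/J^2)$, a finite free $R$-module in degree $0$. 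The Breuil--Kisin twists are trivialised because $I=(d)$ is principal. So each graded piece is a free $R$-module concentrated in degree $0$, and as $R$ is $p$-torsion-free, each graded piece is $p$-torsion-free.

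It remains to pass from the graded pieces to the whole. By induction on $n$, each $\Fil^\cj_{\le n}$ is a discrete module that is an iterated extension of $p$-torsion-free modules, hence $p$-torsion-free; the extensions are short exact sequences of discrete modules because all pieces sit in degree $0$. The colimit of these filtered pieces is then a filtered colimit of injections of $p$-torsion-free modules, hence $p$-torsion-free. Thus $P/d$ is $p$-torsion-free, and together with $d$ being a nonzerodivisor on $P$ this is exactly the statement that $(P,dP)$ is transversal in the sense of \cite[Definition 2.1.3]{BL22}.

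The main obstacle is completion. The identification of $\overline{\Prism}_{R/A}$ with the colimit of its conjugate filtration holds only after $p$-completion, so the colimit above must be replaced by its $p$-completion. I will handle this by noting that the colimit $M$ is $p$-torsion-free with $M/p$ discrete, so its derived $p$-completion is classical and again $p$-torsion-free, since an inverse limit of the flat $\Z/p^k$-modules $M/p^k$ is $p$-torsion-free. Alternatively, I could verify directly that $P/d\otimes^L\F_p$ is discrete, using the graded pieces mod $p$, which are free over $R/p$.
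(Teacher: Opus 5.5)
Your proposal is correct and takes essentially the same route as the paper. Both identify $P/d$ with $\overline{\Prism}_{R/A}$ (the paper cites \cite[Example 7.9]{BS22}) and then use the conjugate filtration, whose graded pieces are finite free $R$-modules. The paper compresses this into ``$R\rightarrow\overline{\Prism}_{R/A}$ is $p$-completely faithfully flat, hence $p$-torsion-free,'' while you spell out the inductive extension, colimit, and $p$-completion steps, as well as why $d$ is a nonzerodivisor on $P$, all of which the paper leaves implicit.
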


\begin{proof}
    By \cite[Example 7.9]{BS22}, there is a canonical map 
    $$R\rightarrow \overline{\Prism}_{R/A}\simeq A\{\frac{f_1}{d},\ldots,\frac{f_r}{d}\}^\wedge_{(p,d)}/d,
    $$ 
    and by examining the conjugate filtration on $\overline{\Prism}_{R/A}$, we find that $R\rightarrow \overline{\Prism}_{R/A}$ is $p$-completely faithfully flat, and therefore $\overline{\Prism}_{R/A}\simeq A\{\frac{f_1}{d},\ldots,\frac{f_r}{d}\}^\wedge_{(p,d)}/d$ is $p$-torsion-free.     
\end{proof}

\begin{lem}\label{lem:delta-power-Nygaard}
    Let $(A,I=(d))$ be a bounded orientable prism, and let $R$ be an $A$-algebra of the form $A/(d,f_1,\ldots,f_r)$, where the images of $f_1,\ldots,f_r$ form a $p$-completely Koszul-regular sequence in $\overline{A}=A/(d)$. Assume further that $R$ is $p$-torsion-free and let $f\in A$ be an element with $\phi^k(f)\in (d,f_1,\ldots,f_r)$, $k\geq 0$. Then 
    $$
    \delta^k(\frac{\phi^j(f)}{\phi^j(d)})\in \calN^{\geq 1}A\{\frac{\phi(f_1)}{\phi(d)},\ldots,\frac{\phi(f_r)}{\phi(d)}\}^\wedge_{(p,\calN)},\quad  j\geq 1,k\geq 0.
    $$
\end{lem}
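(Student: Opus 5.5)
We work inside $\Prism:=A\{\frac{f_1}{d},\ldots,\frac{f_r}{d}\}^\wedge_{(p,d)}$ and use the inclusion of \cite[Lemma 3.14]{AKN24} recalled in Lemma \ref{lem:prismatic-envelope-Nygaard-generators}. This inclusion identifies $A\{\frac{\phi(f_1)}{\phi(d)},\ldots,\frac{\phi(f_r)}{\phi(d)}\}^\wedge_{(p,\calN)}$ with the (Nygaard-completed) Frobenius twist. It also identifies $\calN^{\geq 1}$ with the set of elements $y$ such that $\phi(y)\in d\Prism$, or equivalently, via the relative Frobenius, with the preimage of $I$. Concretely, I will use the description of the Nygaard filtration for a transversal prism (Lemma \ref{lem:transversal-envelope}):
$$\calN^{\geq 1}=\{y:\ \phi_{/A}(y)\in d\,\Prism\}.$$
The statement is then reduced to showing that $\phi$ applied to $\delta^k\bigl(\frac{\phi^j(f)}{\phi^j(d)}\bigr)$ is divisible by $\phi(d)$ in the appropriate ring, i.e.\ that its image in $\overline{\Prism}_{R/A}=\Prism/d$ vanishes.

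The first step is to make sense of the elements. Since $\phi^j(f)\in(d,f_1,\ldots,f_r)$, we have $\frac{\phi^j(f)}{d}\in\Prism$. Then $\frac{\phi^{j}(f)}{\phi^j(d)}=\phi\bigl(\frac{\phi^{j-1}(f)}{\phi^{j-1}(d)}\bigr)$ is obtained by iterating $\phi$ on $\frac{\phi^{j-1}(f)}{d}$-type quotients; these are defined because $\phi^{j-1}(d)$ divides $\phi^{j-1}(f)$ modulo a previously defined quotient. Using that $\Prism/d$ is $p$-torsion-free (Lemma \ref{lem:transversal-envelope}), such quotients are unique. The key observation I plan to prove is that the element $g_j:=\frac{\phi^{j-1}(f)}{\phi^{j-1}(d)}$ (for $j\ge1$) has image in $\Prism/d$ lying in the ideal generated by $p$-power-divided expressions that vanish under the relevant map. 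Under the Frobenius twist, the element $\delta^k(\phi(g_j))$ corresponds to $\phi_{/A}$ of $\delta^k(g_j)$.

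The second step is to show $\delta^k(g)\equiv 0$ modulo $\ker(\Prism\to\overline{\Prism}\to R)$-type conditions, by induction on $k$. For $k=0$, $g_j\cdot\phi^{j-1}(d)=\phi^{j-1}(f)\in(d,f_i)$. Reducing modulo $d$, and using $\phi^{j-1}(d)\equiv p^{j-1}\cdot(\text{unit})$ modulo $d$ together with $p$-torsion-freeness of $\Prism/d$, I conclude that the image of $\phi(g_j)$ lies in the kernel of $\Prism\to R$. For the inductive step, I use $p\,\delta(y)=\phi(y)-y^p$ and $p$-torsion-freeness to propagate membership from $\delta^k$ to $\delta^{k+1}$. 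This is exactly where the hypothesis that $\phi^k(f)\in(d,f_1,\ldots,f_r)$ for \emph{all} $k$ is needed: it ensures that $\phi$ of the relevant elements again lies in the ideal.

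The main obstacle is this induction: the ideal $\calN^{\geq1}$ is not obviously a $\delta$-ideal, so $p\delta(y)=\phi(y)-y^p$ alone does not suffice. I will first try to identify $\calN^{\geq 1}$ of the envelope with the preimage of $\ker(\Prism/d\to R)$ under $\phi_{/A}$, which is a pd-type ideal. I then hope to show that this ideal is stable under $\delta$ after using $p$-torsion-freeness and the hypothesis on all $\phi^k(f)$. Dividing by $p$ is legitimate precisely because of Lemma \ref{lem:transversal-envelope}.
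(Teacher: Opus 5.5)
\textbf{There is a genuine gap in the inductive step.} You leave that step open, and the repair you propose cannot work. You say that $p\,\delta(y)=\phi(y)-y^p$ ``alone does not suffice'' and plan to show that $\calN^{\geq 1}$ (equivalently, the preimage of $d\Prism$ under the inclusion) is stable under $\delta$. This is false: $d$ lies in $\calN^{\geq 1}$, but $\delta(d)$ is a unit. Your ``$\ker(\Prism/d\to R)$'' is also not a meaningful object, since the structure map goes $R\to\overline{\Prism}_{R/A}$. What you need is just the preimage of $d\Prism$.

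The missing idea is to induct on $k$ for all $j\geq 1$ simultaneously. Put $y_j:=\frac{\phi^j(f)}{\phi^j(d)}$. Then $\phi(y_j)=y_{j+1}$, and since $\phi$ commutes with $\delta$,
\[
p\,\delta^{k+1}(y_j)=\phi\bigl(\delta^k(y_j)\bigr)-\delta^k(y_j)^p=\delta^k(y_{j+1})-\delta^k(y_j)^p .
\]
Suppose $\delta^k(y_j)\in d\Prism$ holds at level $k$ for every $j\geq 1$. Then both terms on the right lie in $d\Prism$, so $p\,\delta^{k+1}(y_j)\in d\Prism$. The $p$-torsion-freeness of $\Prism/d$ (Lemma \ref{lem:transversal-envelope}) then gives $\delta^{k+1}(y_j)\in d\Prism$. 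No $\delta$-stability of any ideal is needed. This is exactly the paper's argument. It is also where the hypothesis on $\phi^k(f)$ for all $k$ really enters: through the base case at every index $j$.

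\textbf{Your base case is garbled as written, but it contains a correct idea.} The condition to verify is that the image of $y_j$ lies in $d\Prism$. It is not membership in some ``$\ker(\Prism\to R)$'', nor divisibility of $\phi(\cdot)$ by $\phi(d)$.

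Apply the reduction mod $d$ to $y_j$ itself rather than to $g_j$. We have $\phi^j(d)\,y_j=\phi^j(f)\in(d,f_1,\ldots,f_r)\subset d\Prism$. For $j\geq 1$, $\phi^j(d)\equiv p\cdot(\text{unit})\pmod d$. Note it is $p$ times a unit, not $p^{j}$ times a unit. Also, with exponent $0$ you get $\phi^0(d)=d\equiv 0$, so your version says nothing when $j=1$. Now $p$-torsion-freeness of $\Prism/d$ forces $y_j\in d\Prism$.

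Corrected this way, your base case is an elementary substitute for the paper's appeal to \cite[Corollary 2.2.9]{BL22}, namely $d\Prism\cap\phi^j(d)\Prism=d\phi^j(d)\Prism$ in a transversal prism; your argument is essentially that corollary's proof.
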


\begin{proof}
    Consider the inclusion $A\{\frac{\phi(f_1)}{\phi(d)},\ldots,\frac{\phi(f_r)}{\phi(d)}\}^\wedge_{(p,\calN)}\subset A\{\frac{f_1}{d},\ldots,\frac{f_r}{d}\}^\wedge_{(p,d)}$. We argue by induction on $k\geq 0$. 
    
    When $k=0$, we have $f\in dA\{\frac{f_1}{d},\ldots,\frac{f_r}{d}\}^\wedge_{(p,d)}$, and therefore $\phi^j(f)\in \phi^j(d)A\{\frac{f_1}{d},\ldots,\frac{f_r}{d}\}^\wedge_{(p,d)}$, and with our assumption, we have $\phi^j(f)\in dA\{\frac{f_1}{d},\ldots,\frac{f_r}{d}\}^\wedge_{(p,d)}\cap \phi^j(d)A\{\frac{f_1}{d},\ldots,\frac{f_r}{d}\}^\wedge_{(p,d)}$. By Lemma \ref{lem:transversal-envelope} the prism $(A\{\frac{f_1}{d},\ldots,\frac{f_r}{d}\}^\wedge_{(p,d)},dA\{\frac{f_1}{d},\ldots,\frac{f_r}{d}\}^\wedge_{(p,d)})$ is transversal and by \cite[Corollary 2.2.9]{BL22}, we have $\phi^j(f)\in d\phi^j(d)A\{\frac{f_1}{d},\ldots,\frac{f_r}{d}\}^\wedge_{(p,d)}$, and therefore $\frac{\phi^j(f)}{\phi^j(d)}\in \calN^{\geq 1}A\{\frac{\phi(f_1)}{\phi(d)},\ldots,\frac{\phi(f_r)}{\phi(d)}\}^\wedge_{(p,\calN)}$.

    Assume that the statement holds for some $k\geq 0$. By Lemma \ref{lem:transversal-envelope}, it suffices to check that $p\delta^{k+1}(\frac{\phi^j(f)}{\phi^j(d)})\in \calN^{\geq 1}A\{\frac{\phi(f_1)}{\phi(d)},\ldots,\frac{\phi(f_r)}{\phi(d)}\}^\wedge_{(p,\calN)}$, $j\geq 1$,$1\leq i\leq r$. In fact, by the inductive hypothesis, we have
    $$
    p\delta^{k+1}(\frac{\phi^j(f)}{\phi^j(d)})=\delta^{k}(\frac{\phi^{j+1}(f)}{\phi^{j+1}(d)})-\delta^k(\frac{\phi^{j}(f)}{\phi^j(d)})^p\in \calN^{\geq 1}A\{\frac{\phi(f_1)}{\phi(d)},\ldots,\frac{\phi(f_r)}{\phi(d)}\}^\wedge_{(p,\calN)}.
    $$
\end{proof}

\begin{cor}\label{cor:Nygaard-graded-and-Hodge-Tate}
    Let $(A,I=(d))$ be an orientable and transversal prism, and let $R$ be an $A$-algebra of the form $A/(d,f_1,\ldots,f_r)$, where the images of $f_1,\ldots,f_r$ form a $p$-completely Koszul-regular sequence in $\overline{A}=A/(d)$. Assume that either of the following conditions is satisfied: 
    \begin{itemize}
        \item[(i)] $R$ is $p$-torsion-free and $\phi^k(f_i)\in (d,f_1,\ldots,f_r)$, $k\geq 0,1\leq i\leq r$.

        \item[(ii)] $f_i=g_{i,1}-g_{i,0}$ with $\delta(g_{i,0})=\delta(g_{i,1})=0$, $1\leq i\leq r$.
    \end{itemize}
    Then the following statements hold:
    \begin{itemize}
        \item[(1)] The ring $A\{\frac{f_1}{d},\ldots,\frac{f_r}{d}\}^\wedge_{(p,d)}/d$ admits a divided power structure on the ideal generated by the classes of $\{\delta^k(\frac{f_i}{d})\}_{k\geq 0,1\leq i\leq r}$, and there is an isomorphism  
        $$
        R\langle s_{1},\ldots,s_r\rangle^\wedge_p\xrightarrow[s_i^{[p^k]}\mapsto \delta^k(\frac{f_i}{d}),\ \text{up to multiplications by units}]{\sim}A\{\frac{f_1}{d},\ldots,\frac{f_r}{d}\}^\wedge_{(p,d)}/d,
        $$ 
        where $R\langle s_{1},\ldots,s_r\rangle^\wedge$ is a divided power polynomial.

        \item[(2)] The graded ring $\gr^*_\calN A\{\frac{\phi(f_1)}{\phi(d)},\ldots,\frac{\phi(f_r)}{\phi(d)}\}^\wedge_{(p,\calN)}$ admits a divided power structure on the ideal generated by the classes of $\{f_i^{(k)}\in \calN^{\geq p^k}\}_{k\geq 0,1\leq i\leq r}$ (see Construction \ref{construction:generators-for-envelopes}), and there is an isomorphism of graded rings 
        $$
        \overline{A}[u]\otimes_{\overline{A}}R\langle t_{1},\ldots,t_r\rangle\xrightarrow[u\mapsto d,t_i^{[p^k]}\mapsto f_i^{(k)},\ \text{up to multiplications by units}]{\sim}\gr^*_\calN A\{\frac{\phi(f_1)}{\phi(d)},\ldots,\frac{\phi(f_r)}{\phi(d)}\}^\wedge_{(p,\calN)},
        $$
        where $\overline{A}[u]$ is a polynomial ring and $R\langle t_{1},\ldots,t_r\rangle$ is a divided power polynomial.
    \end{itemize}
\end{cor}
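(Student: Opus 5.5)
The strategy is to prove (1) first and then deduce (2) from it through the relative Frobenius. For (1), write $D=A\{\frac{f_1}{d},\ldots,\frac{f_r}{d}\}^\wedge_{(p,d)}$ and $\overline{D}=D/d\simeq\overline{\Prism}_{R/A}$ (\cite[Example 7.9]{BS22}). The conjugate filtration on $\overline{D}$ (Notation \ref{notation:relative-prismatic-cohomology}(c)) has $\gr^\cj_n\simeq L\widehat{\Omega}^n_{R/\overline{A}}[-n]\{-n\}$. Since $R=\overline{A}/(f_1,\ldots,f_r)$ is a Koszul-regular quotient, $L\Omega^1_{R/\overline{A}}[-1]\simeq \overline{J}/\overline{J}^2$ is free on the classes of the $f_i$, and $\gr^\cj_n\simeq\Gamma^n_R(\overline{J}/\overline{J}^2)$ is free over $R$. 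Consequently $\overline{D}$ is $p$-completely flat over $R$ and discrete, and its Hilbert series matches that of $R\langle s_1,\ldots,s_r\rangle$. The image of $\frac{f_i}{d}$ in $\Fil^\cj_{\leq1}$ maps to a generator of $\gr^\cj_1$ (up to units). It therefore suffices to construct a map $R\langle s_1,\ldots,s_r\rangle^\wedge_p\to\overline{D}$ with $s_i^{[p^k]}\mapsto\delta^k(\frac{f_i}{d})$ and to check that it is compatible with the filtrations and an isomorphism on graded pieces.

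To get a divided power structure, I show that $\delta^k(\frac{f_i}{d})^p\equiv p\cdot(\text{unit})\cdot\delta^{k+1}(\frac{f_i}{d})$ modulo $d$, and more generally modulo $d$ plus lower conjugate pieces. This recursive relation is what identifies $\delta^{k+1}$ with $\gamma_p(\delta^k)$ up to a unit, exactly as in the divided power envelope picture. It is the analogue of Lemma \ref{lem:Nygaard-generators-divided-power}: from $\phi(x)=x^p+p\delta(x)$ applied to $x=\delta^k(\frac{f_i}{d})$, we get $p\delta^{k+1}(\frac{f_i}{d})=\phi(\delta^k(\frac{f_i}{d}))-\delta^k(\frac{f_i}{d})^p$. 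So the key claim is $\phi(\delta^k(\frac{f_i}{d}))\equiv0$ modulo $d$, or at least modulo $(d)+$(the divided power ideal generated by the earlier generators, times $p$), i.e.\ $\delta^k(\frac{\phi(f_i)}{\phi(d)})\in dD$.

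The two hypotheses serve exactly to prove this claim. Under (i), Lemma \ref{lem:delta-power-Nygaard} (with $j=1$) gives $\delta^k(\frac{\phi(f_i)}{\phi(d)})\in\calN^{\geq1}$, and $\calN^{\geq1}$ maps into $dD$ under $\phi_{/A}$ composed with the inclusion; this is the way I would translate Nygaard level one into divisibility by $d$, using transversality (Lemma \ref{lem:transversal-envelope}, \cite[Corollary 2.2.9]{BL22}). Under (ii), $\frac{\phi(f_i)}{\phi(d)}=\frac{g_{i,1}^p-g_{i,0}^p}{\phi(d)}$. Since $\delta(g)=0$, this reduces to the classical computation for the $q$-de Rham/diagonal case: $\phi(f_i)=(f_i+g_{i,0})^p-g_{i,0}^p$ is divisible by $f_i$, hence lies in $d\cdot\phi(d)D$ after dividing. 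I would then verify that the $\delta$-iterates stay in $dD$ by the same induction as Lemma \ref{lem:delta-power-Nygaard}. I expect this step, with the unit bookkeeping and $p$-torsion issues needed to divide by $p$ in case (ii) without $p$-torsion-freeness, to be the main obstacle. To handle it I would first reduce to the universal case $A=\Z_p\{g_0,g_1,d\}$-type flat models by base change, where everything is torsion-free.

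For (2), use the equivalence $\gr^n\phi_{/A}:\gr^n_\calN\xrightarrow{\sim}\Fil^\cj_{\leq n}\overline{\Prism}_{R/A}\{n\}$ from Notation \ref{notation:relative-prismatic-cohomology}(f), with twists trivialized by $d$. This sends $d\in\calN^{\geq1}$ to $d/d=1\in\Fil^\cj_{\leq 1}$ in degree $1$. By Lemma \ref{lem:prismatic-envelope-Nygaard-generators}, it sends $f_i^{(k)}=d^{p^k}\delta^k(\frac{f_i}{d})\in\calN^{\geq p^k}$ to $\delta^k(\frac{f_i}{d})$ in $\Fil^\cj_{\leq p^k}$. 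The conjugate filtration on $R\langle s\rangle$ is by divided-power degree, so by part (1) we get $\bigoplus_n\Fil^\cj_{\leq n}\overline{D}\simeq \overline{A}[u]\otimes_{\overline{A}}R\langle t_1,\ldots,t_r\rangle$ as the Rees algebra. Here $u$ is the degree-one class of $1$ and $t_i^{[p^k]}$ is $s_i^{[p^k]}$ placed in degree $p^k$. Transporting the divided power structure back along the multiplicative isomorphism $\gr^*\phi_{/A}$ yields (2).
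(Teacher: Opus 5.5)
Your proposal is correct, and for (1) and for case (ii) it is essentially the paper's argument. The paper also derives $p\delta^{k+1}(\frac{f_i}{d})+\delta^k(\frac{f_i}{d})^p=\delta^k(\frac{\phi(f_i)}{\phi(d)})\in dA\{\frac{f_1}{d},\ldots,\frac{f_r}{d}\}^\wedge_{(p,d)}$ from Lemma~\ref{lem:delta-power-Nygaard}. It uses this relation to define the map $R\langle s_1,\ldots,s_r\rangle^\wedge_p\to\overline{D}$ and then checks bijectivity on conjugate filtrations. The only difference is that it reads this off the explicit $R$-basis $\{\prod_{j,i}\delta^j(\frac{f_i}{d})^{e_{j,i}}\}$ of $\Fil^\cj_{\leq n}$ from \cite[Proposition 3.9]{AKN23}, rather than comparing graded pieces with $\Gamma^n_R(\overline{J}/\overline{J}^2)$.

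Case (ii) is likewise reduced to case (i), via the $\delta$-map $A[\{x_{i,0},x_{i,1}\}_i]\to A$, $x_{i,j}\mapsto g_{i,j}$, with $\delta(x_{i,j})=0$. This keeps $A$ fixed, since transversality of $A$ is exactly what makes $\overline{A}[x_1,\ldots,x_r]^\wedge_p$ $p$-torsion-free. It is simpler than a model in which $d$ is also universal. As you suspected, the direct divisibility argument you sketch for (ii) needs $(D,dD)$ to be transversal, i.e.\ $R$ to be $p$-torsion-free, so the reduction is genuinely needed there.

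The real difference is in (2). The paper does not deduce (2) from (1). Instead it builds a second map $\overline{A}[u]\otimes_{\overline{A}}R\langle t_1,\ldots,t_r\rangle\to\gr^*_\calN A\{\frac{\phi(f_1)}{\phi(d)},\ldots,\frac{\phi(f_r)}{\phi(d)}\}^\wedge_{(p,\calN)}$ directly on the Nygaard side. It uses the relation $pf_i^{(k+1)}+(f_i^{(k)})^p=\delta^k(\frac{\phi(f_i)}{\phi(d)})d^{p^{k+1}}\in\calN^{\geq p^{k+1}+1}$, from Lemma~\ref{lem:Nygaard-generators-divided-power} together with Lemma~\ref{lem:delta-power-Nygaard}. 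It then checks that this map is an isomorphism modulo $u$, which needs only the cofiber sequence $\gr^{*-1}_\calN\xrightarrow{d}\gr^*_\calN\to\gr^\cj_*\overline{D}$.

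You instead identify $\gr^*_\calN$ multiplicatively with the Rees algebra $\bigoplus_n\Fil^\cj_{\leq n}\overline{D}$ and use $f_i^{(k)}=d^{p^k}\delta^k(\frac{f_i}{d})$ from Lemma~\ref{lem:prismatic-envelope-Nygaard-generators}. This makes (2) a formal consequence of (1), and Lemma~\ref{lem:Nygaard-generators-divided-power} becomes unnecessary. The cost is that you need (1) as a filtered isomorphism, sending divided-power degree $\leq n$ onto $\Fil^\cj_{\leq n}$.

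That filtered statement needs care when $R$ has $p$-torsion. A graded ring map $\Gamma_R(R^r)\to\Gamma_R(\overline{J}/\overline{J}^2)$ that is bijective in degree $1$ need not be bijective in degree $p^k$. Nor is $\delta^k(\frac{f_i}{d})\in\Fil^\cj_{\leq p^k}$ automatic. So your reduction to the torsion-free model must transport the conjugate filtration too, not just the relation $\delta^k(\frac{\phi(f_i)}{\phi(d)})\in dD$. This works by base change of Hodge--Tate cohomology along $A'=A[\{x_{i,0},x_{i,1}\}_i]\to A$: with $R'=A'/(d,x_{1,1}-x_{1,0},\ldots,x_{r,1}-x_{r,0})$, one has $R\simeq R'\widehat{\otimes}^L_{A'}A$ because $d,f_1,\ldots,f_r$ is Koszul-regular. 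Alternatively, simply cite \cite[Proposition 3.9]{AKN23} as the paper does.
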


\begin{proof}
    First we assume that condition (i) is satisfied. Since $R$ is now $p$-torsion-free, the existence of the divided powers is merely a ring-theoretial property.
    
    (1) By \cite[Proposition 3.9]{AKN23}, the $R$-module $\Fil^\cj_{\leq n} A\{\frac{f_1}{d},\ldots,\frac{f_r}{d}\}^\wedge_{(p,d)}/d$ admits a basis of the form 
    $$
    \{\prod_{j,i}\delta^j(\frac{f_i}{d})^{e_{j,i}}\}_{1\leq i\leq r,j\geq 0,0\leq e_{j,i}<p,\sum_{j,i}e_{j,i}p^j\leq n}.
    $$
    On the other hand, by Lemma \ref{lem:delta-power-Nygaard} %(or Lemma \ref{lem:delta-zero-Nygaard}) 
    we have 
    $$
    p\delta^{k+1}(\frac{f_i}{d})+\delta^k(\frac{f_i}{d})^p=\delta^k(\frac{\phi(f_i)}{\phi(d)})\in dA\{\frac{f_1}{d},\ldots,\frac{f_r}{d}\}^\wedge_{(p,d)}, \quad k\geq 0, 1\leq i\leq r.
    $$
    Then there is a ring homomorphism 
    $$
    R\langle s_{1},\ldots,s_r\rangle^\wedge_p\rightarrow A\{\frac{f_1}{d},\ldots,\frac{f_r}{d}\}^\wedge_{(p,d)}/d
    $$ 
    which sends $s_i^{[p^k]}$ to the class of $\delta^k(\frac{f_i}{d})$ up to the multiplication by a unit, and induces isomorphisms on the conjugate filtrations; hence, it is an isomorphism. 

    (2) By Notation \ref{notation:relative-prismatic-cohomology} (f), or equivalently Corollary \ref{cor:THH-for-qrsp-relative-polynomail} (2), we have %an exact sequence 
    \begin{align*}
    \ &\cofib( gr^{*-1}_\calN A\{\frac{\phi(f_1)}{\phi(d)},\ldots,\frac{\phi(f_r)}{\phi(d)}\}^\wedge_{(p,\calN)}\xrightarrow{u_d}gr^*_\calN A\{\frac{\phi(f_1)}{\phi(d)},\ldots,\frac{\phi(f_r)}{\phi(d)}\}^\wedge_{(p,\calN)})\\
    & \simeq \gr^\cj_*A\{\frac{f_1}{d},\ldots,\frac{f_r}{d}\}^\wedge_{(p,d)}/d \\
    & \simeq R\langle t'_{1},\ldots,t'_{r}\rangle,
    \end{align*}
    where $R\langle t'_{1},\ldots,t'_{r}\rangle$ is the divided power polynomial on $\{t_{i}=f_i+\overline{J}^2\in \overline{J}/\overline{J}^2\}_{1\leq i\leq r}$ and $\overline{J}=(d,f_1,\ldots,f_r)/(d)$. On the other hand, let $\{f_i^{(k)}\}_{k\geq 0}\subset A\{\frac{\phi(f_1)}{\phi(d)},\ldots,\frac{\phi(f_r)}{\phi(d)}\}^\wedge_{(p,\calN)}$ be determined as in Construction \ref{construction:generators-for-envelopes}, then by Lemma \ref{lem:Nygaard-generators-divided-power} and Lemma \ref{lem:delta-power-Nygaard} %(or Lemma \ref{lem:delta-zero-Nygaard}) 
    we have 
    $$
    pf_{i}^{(k+1)}+(f_i^{(k)})^p=\delta^k(\frac{\phi(f_i)}{\phi(d)})d^{p^{k+1}}\in \calN^{\geq p^{k+1}+1}A\{\frac{\phi(f_1)}{\phi(d)},\ldots,\frac{\phi(f_r)}{\phi(d)}\}^\wedge_{(p,\calN)},\quad k\geq 0,1\leq i\leq r.
    $$
    Then there is a ring homomorphism 
    $$
    \overline{A}[u_d]\otimes_{\overline{A}}R\langle t_{1},\ldots,t_r\rangle\rightarrow  \gr^*_\calN A\{\frac{\phi(f_1)}{\phi(d)},\ldots,\frac{\phi(f_r)}{\phi(d)}\}^\wedge_{(p,\calN)}
    $$
    which sends $u_d$ to the class of $d$ and $t_i^{[p^k]}$ to the class of $f_i^{(k)}$ up to the multiplication by a unit, and induces an isomorphism on the $u_d$-Bocksteins; hence, it is an isomorphism. 

    Now we treat the situation when condition (ii) is satisfied. Consider the prism $(A[\{x_{i,0},x_{i,1}\}_{1\leq i\leq r}]^\wedge_{(p,I)},IA[\{x_{i,0},x_{i,1}\}_{1\leq i\leq r}]^\wedge_{(p,I)})$, where the $\delta$-structure on $A[\{x_{i,0},x_{i,1}\}_{1\leq i\leq r}]$ is determined by the $\delta$-structure of $A$ and $\delta(x_{i,0})=\delta(x_{i,1})=0$, $1\leq i\leq r$. We have $A[\{x_{i,0},x_{i,1}\}_{1\leq i\leq r}]^\wedge_p/(d,x_{1,1}-x_{1,0},\ldots,x_{r,1}-x_{r,0})\simeq \overline{A}[x_1,\ldots,x_r]^\wedge_p$, which is $p$-torsion-free by assumption; note that we have $\phi^k(x_{i,1}-x_{i,0})\in (x_{i,1}-x_{i,0})$, $k\geq 0$. By Lemma \ref{lem:delta-power-Nygaard}, we have 
    $$
    \delta^k(\frac{\phi^j(x_{i,1}-x_{i,0})}{\phi^j(d)})\in \calN^{\geq 1}A[\{x_{i,0},x_{i,1}\}_{1\leq i\leq r}]\{\frac{\phi(x_{i,1}-x_{i,0})}{\phi(d)};1\leq i\leq r\}^\wedge_{(p,\calN)},\quad k\geq 0.
    $$
    Since $\delta(g_{i,0})=\delta(g_{i,1})=0$, the $A$-algebra homomorphism $A[\{x_{i,0},x_{i,1}\}_{1\leq i\leq r}]\xrightarrow{x_{i,0}\mapsto g_{i,0},x_{i,1}\mapsto g_{i,1}}A$ is a morphism of $\delta$-rings. Now we consider the following commutative diagram 
    $$
    \xymatrix@R=50pt@C=75pt{
    A[\{x_{i,0},x_{i,1}\}_{1\leq i\leq r}]^\wedge_{(p,I)} \ar[r]^-{x_{i,1},x_{i,0}\mapsto x_i;d\mapsto 0} \ar[d]^{x_{i,0}\mapsto g_{i,0};x_{i,1}\mapsto g_{i,1}} & \overline{A}[x_1,\ldots,x_r]^\wedge_p \ar[d]^{x_i\mapsto g_{i,0}} \\
    A \ar[r] & R,
    }
    $$
    which induces a morphism
    $$
    A[\{x_{i,0},x_{i,1}\}_{1\leq i\leq r}]\{\frac{x_{i,1}-x_{i,0}}{d};1\leq i\leq r\}^\wedge_{(p,I)}\xrightarrow{\delta^k(\frac{x_{i,1}-x_{i,0}}{d})\mapsto \delta^k(\frac{f_i}{d})} A\{\frac{f_1}{d},\ldots,\frac{f_r}{d}\}^\wedge_{(p,I)},
    $$
    and a $\calN$-filtered morphism
    $$
    A[\{x_{i,0},x_{i,1}\}_{1\leq i\leq r}]\{\frac{\phi(x_{i,1}-x_{i,0})}{\phi(d)};1\leq i\leq r\}^\wedge_{(p,\calN)}\xrightarrow{(x_{i,1}-x_{i,0})^{(k)}\mapsto f_i^{(k)}} A\{\frac{\phi(f_1)}{\phi(d)},\ldots,\frac{\phi(f_r)}{\phi(d)}\}^\wedge_{(p,\calN)}.
    $$ 
    Thus we can just resort to the situation when condition (i) is satisfied.
\end{proof}

\begin{rem}\label{rem:divided-power-structure}
    In Corollary \ref{cor:Nygaard-graded-and-Hodge-Tate}, when $R$ has $p$-torsions, the statements (1) and (2) are not so precise, since we should specify the divided power structures. Nevertheless, these divided power structures have been indicated by the isomorphisms in statements (1) and (2): we are actually equipping $A\{\frac{f_1}{d},\ldots,\frac{f_r}{d}\}^\wedge_{(p,d)}/d$ and $\gr^*_\calN A\{\frac{\phi(f_1)}{\phi(d)},\ldots,\frac{\phi(f_r)}{\phi(d)}\}^\wedge_{(p,\calN)}$ with the divided power structures determined by the (classes of) elements $\delta^k(\frac{f_i}{d})$ and $f_i^{(k)}$. Also note that these divided power structures are functorial in the pair $(A,R)$. 
\end{rem}

\begin{cor}[Compare {\cite[Proposition 3.7]{KN22}}]\label{cor:THH-delta-power-Nygaard}
    Let $k$ be a perfect ring of characteristic $p$, and $\Sphere_{W(k)}$ the corresponding ring of spherical Witt vectors with coefficients in $k$. Set 
    $$
    \B_n:=\Sphere_{W(k)}[x_0,x_1,\ldots,x_n]=\Sphere_{W(k)}\otimes (\Sigma^{\infty}_+\N)^{\otimes n+1};
    $$    
    also, 
    set $B_n=W(k)[x_0,x_1,\ldots,x_n]$ equipped with the $\delta$-structure determined by the canonical $\delta$-structure on $W(k)$ and $\delta(x_i)=0$, $0\leq i\leq n$. Let $I=(d)\subset B_n$ be a distinguished principal ideal and let $R=B_{n}/(I,f_1,\ldots,f_r)$ where $f_1,\ldots,f_r\in B_n$ give a Koszul regular sequence in $\overline{B}_n:=B_{n}/I$. Assume that either of the following conditions is satisfied: 
    \begin{itemize}
        \item[(i)] $R$ is $p$-torsion-free and $\phi^k(f_i)\in (d,f_1,\ldots,f_r)$, $k\geq 0,1\leq i\leq r$.

        \item[(ii)] $\phi(f_i)=g_{i,1}-g_{i,0}$ with $\delta(g_{i,0})=\delta(g_{i,1})=0$, $1\leq i\leq r$.
    \end{itemize} 
    Then there is an isomorphism of graded rings with divided power strucutres
    $$
    \pi_{2*}\THH(R/\B_n)\simeq \overline{B}_n[u]\otimes_{\overline{B}_n}R\langle t_{1},\ldots,t_r\rangle,
    $$
    where $u$ corresponds to the class of $d\in \calN^{\geq 1}B^\wedge_n\{\frac{\phi(f_1)}{\phi(d)},\ldots,\frac{\phi(f_r)}{\phi(d)}\}^\wedge_{(p,\calN)}$  and $t_i^{[p^k]}$, up to the multiplication by a unit, corresponds to the class of $f_i^{(k)}\in \calN^{\geq p^k}B^\wedge_n\{\frac{\phi(f_1)}{\phi(d)},\ldots,\frac{\phi(f_r)}{\phi(d)}\}^\wedge_{(p,\calN)}$.
\end{cor}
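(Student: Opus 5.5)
The plan is to combine Theorem \ref{thm:THH-for-qrsp-relative-polynomail} (1) with the explicit prismatic description in Corollary \ref{cor:THH-for-qrsp-relative-polynomail}, and then read off the graded ring structure from Corollary \ref{cor:Nygaard-graded-and-Hodge-Tate} (2). First, $B_n\rightarrow R=B_n/(d,f_1,\ldots,f_r)$ is relatively quasiregular semiperfectoid: surjectivity is clear, $I=(d)$ is distinguished by hypothesis, and since $f_1,\ldots,f_r$ is Koszul regular in $\overline{B}_n$, the cotangent complex $L\Omega^1_{R/\overline{B}_n}\simeq \overline{J}/\overline{J}^2[1]$ is a finite projective $R$-module shifted to degree $1$. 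Hence Theorem \ref{thm:THH-for-qrsp-relative-polynomail} (1) gives
$$
\pi_{2*}\THH(R/\B_n)\simeq \gr^*_\calN\widehat{\Prism}^{(1)}_{R/B_n^\wedge},
$$
with odd homotopy vanishing. By \cite[Theorem 9.6]{AKN23}, exactly as in the proof of Corollary \ref{cor:THH-for-qrsp-relative-polynomail} (3), the Nygaard-filtered twisted prismatic cohomology is $\calN^{\geq *}B_n^\wedge\{\frac{\phi(f_1)}{\phi(d)},\ldots,\frac{\phi(f_r)}{\phi(d)}\}^\wedge_{(p,\calN)}$. This identifies $\pi_{2*}\THH(R/\B_n)$ with its associated graded $\gr^*_\calN B_n^\wedge\{\frac{\phi(f_i)}{\phi(d)}\}^\wedge_{(p,\calN)}$.

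Next I would apply Corollary \ref{cor:Nygaard-graded-and-Hodge-Tate} (2) with $(A,I)=(B_n^\wedge,(d))$. This requires the prism to be transversal, i.e.\ $B_n^\wedge/d$ $p$-torsion-free. In case (i) this follows because $\overline{B}_n\rightarrow R$ is a Koszul regular quotient with $R$ $p$-torsion-free. Concretely, $p$-torsion-freeness lifts along a Koszul regular quotient by the $p$-complete flatness argument of Lemma \ref{lem:transversal-envelope}, after replacing $\overline{B}_n$ by its $p$-completion. In case (ii) I would argue as in the corollary's own proof, via the universal $\delta$-map from $A[\{x_{i,0},x_{i,1}\}]$, where transversality is automatic; transversality of $B_n^\wedge$ itself is then only needed through functoriality. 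Corollary \ref{cor:Nygaard-graded-and-Hodge-Tate} (2) then gives the isomorphism $\overline{B}_n[u]\otimes_{\overline{B}_n}R\langle t_1,\ldots,t_r\rangle\simeq \gr^*_\calN(\cdots)$, sending $u\mapsto d$ and $t_i^{[p^k]}\mapsto f_i^{(k)}$ up to units. Composing the two isomorphisms yields the claim. The divided power structure is transported along the isomorphism in the sense of Remark \ref{rem:divided-power-structure}.

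The main obstacle is a mismatch between the hypotheses. Condition (ii) here is stated for $\phi(f_i)=g_{i,1}-g_{i,0}$, whereas Corollary \ref{cor:Nygaard-graded-and-Hodge-Tate} uses $f_i=g_{i,1}-g_{i,0}$. I would handle this by noting that the elements $f_i^{(k)}$ and the Nygaard-graded structure only involve $\delta^k(\frac{\phi(f_i)}{\phi(d)})$. Re-running the proof of Corollary \ref{cor:Nygaard-graded-and-Hodge-Tate} (2) with the morphism of $\delta$-rings $x_{i,j}\mapsto g_{i,j}$ then maps $\frac{\phi(x_{i,1}-x_{i,0})}{\phi(d)}$-type generators compatibly. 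Alternatively, one can observe that $\phi(f_i)$ differs from an admissible difference only by elements of $\calN^{\geq 2}$, so the associated graded is unchanged. I would check this carefully via Lemma \ref{lem:Nygaard-generators-divided-power}: the key relation to verify is that $pf_i^{(k+1)}+(f_i^{(k)})^p$ lies in $\calN^{\geq p^{k+1}+1}$, which is what makes the classes of the $f_i^{(k)}$ form a divided power system.
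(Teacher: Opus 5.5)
Your skeleton is the paper's proof, which is just Theorem \ref{thm:THH-for-qrsp-relative-polynomail} (1) combined with Corollary \ref{cor:Nygaard-graded-and-Hodge-Tate} (2), via the envelope description of \cite[Theorem 9.6]{AKN23}. That citation tacitly uses (ii) in the form $f_i=g_{i,1}-g_{i,0}$, which is what Corollary \ref{cor:Nygaard-graded-and-Hodge-Tate} requires.

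\textbf{The mismatch in (ii) is not resolved.} You rightly flag it as the main obstacle, but your fix fails. If only $\phi(f_i)=g_{i,1}-g_{i,0}$ holds, the $\delta$-map $x_{i,j}\mapsto g_{i,j}$ does the following:
\begin{itemize}
\item it sends $x_{i,1}-x_{i,0}=(x_{i,1}-x_{i,0})^{(0)}$ to $\phi(f_i)$ rather than to $f_i^{(0)}=f_i$;
\item it sends $\phi(x_{i,1}-x_{i,0})$ to $\phi^2(f_i)$ rather than to $\phi(f_i)$.
\end{itemize}
So it does not induce the $\calN$-filtered map of envelopes with $(x_{i,1}-x_{i,0})^{(k)}\mapsto f_i^{(k)}$ on which that argument rests.

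Your reason for expecting it to work is also false. By Construction \ref{construction:generators-for-envelopes}, $f_i^{(0)}=f_i$ and $f_i^{(1)}=\delta(f_i)-\frac{\phi(f_i)}{\phi(d)}\delta(d)$ involve $f_i$ itself, not only the elements $\delta^k(\frac{\phi(f_i)}{\phi(d)})$. The ``differs by $\calN^{\geq 2}$'' alternative is asserted without any argument. Consequently the membership $\delta^k(\frac{\phi(f_i)}{\phi(d)})\in\calN^{\geq 1}$, which you correctly identify as the crux, is never established under (ii) as written.

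\textbf{A reduction that works.} Use the specific base to reduce to the corollary's hypothesis. Take for instance $k$ a perfect field.
\begin{itemize}
\item Comparing leading terms of $\phi(g)$ and $g^p$ shows that the $\delta$-constants of $B_n$ are the Teichm\"uller monomials $[a]x^m$.
\item $\phi(B_n)=W(k)[x_0^p,\ldots,x_n^p]$.
\item Hence $\phi(f_i)=g_{i,1}-g_{i,0}$ forces $g_{i,j}=\phi(h_{i,j})$, apart from the degenerate case $\phi(f_i)=0$.
\item Since $\phi$ is injective and commutes with $\delta$, $\phi(\delta(h_{i,j}))=\delta(g_{i,j})=0$ gives $\delta(h_{i,j})=0$.
\item Then $f_i=h_{i,1}-h_{i,0}$ is of the form the corollary needs.
\end{itemize}

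\textbf{Your transversality remarks are wrong in both cases.}

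In case (i), transversality of $B_n^\wedge$ is never used. The proof of Corollary \ref{cor:Nygaard-graded-and-Hodge-Tate} under (i) only uses that $R$ is $p$-torsion-free, through Lemmas \ref{lem:transversal-envelope} and \ref{lem:delta-power-Nygaard}. The reason you give is also incorrect:
\begin{itemize}
\item Lemma \ref{lem:transversal-envelope} passes from $R$ up to $\overline{\Prism}_{R/A}$, not down to $\overline{B}_n$.
\item $p$-torsion-freeness does not descend along Koszul-regular quotients in general: take $\Z_p[y]\times\F_p$ with $f=(y,1)$.
\end{itemize}

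In case (ii), transversality of $B_n^\wedge[\{x_{i,0},x_{i,1}\}]$ is not automatic. It fails for $I=(p)$, which is compatible with (ii), e.g.\ $f_1=x_0$ and $R=k[x_1,\ldots,x_n]$. Then the universal quotient is an $\F_p$-algebra and Lemma \ref{lem:delta-power-Nygaard} is unavailable. You would instead have to pull back from a transversal base, for instance the universal oriented prism with the $x_{i,j}$ adjoined. The paper's one-line proof does not address this case either.
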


\begin{proof}
    This follows from Theorem \ref{thm:THH-for-qrsp-relative-polynomail} (1) and Corollary \ref{cor:Nygaard-graded-and-Hodge-Tate} (2). 
\end{proof}

%\begin{rem}\label{rem:S1-equivariant-lifts}
%    \cite[Proposition 3.7]{KN22}.
%\end{rem}

\section{A Descent Spectral Sequence for \texorpdfstring{$\THH(\Z_p[x]/(px))^\wedge_p$}{THH(Zp[x]/px)}}\label{section:descent-for-THH-Zpx-mod-px}

In this section we introduce a spectral sequence to compute $\THH(\Z_p[x]/(px))^\wedge_p$, following the descent technique of \cite[$\S$ 4, $\S$ 5]{LW22}. 

\begin{notation}\label{notation:computation}
    In this section, let $\Sphere_p$ denote the $p$-completed sphere spectrum and let $\Sphere_p[z,x]$ denote the polynomial over $\Sphere_p$ in the variables $z,x$, and we regard $\pi_0 \Sphere_p[z,x]=\Z_p[z,x]$ as a $\delta$-ring with $\delta(z)=\delta(x)=0$. Let $R=\Z_p[x]/(px)$, and we regard $R$ as an $\Einfty$-$\Sphere_p[z,x]$-algebra through $\Z_p[z,x]\xrightarrow{z\mapsto p,x\mapsto x}\Z_p[x]/(px)$.
\end{notation}

\begin{construction}\label{construction:descent}
    Consider the natural map 
    $$
    \THH(R)^\wedge_p\rightarrow \THH(R/\Sphere_p[z,x]).
    $$
    Taking ($p$-completed) Cech construction we obtain the following augmented cosimplicial $\Einfty$-algebra 
    $$
    \THH(R)^\wedge_p\rightarrow ((\THH(R/\Sphere_p[z,x]))^{\otimes_{\THH(R)^\wedge_p}\bullet+1})^\wedge_p\simeq \THH(R/\Sphere_p[z,x]^{\otimes_{\Sphere_p}\bullet+1}). 
    $$
    Note that, for each $n\in \N$, $\THH(R/\Sphere_p[z,x]^{\otimes_{\Sphere_p}n+1})$ is $p$-complete by \cite[Lemma 3.6]{KN22}.
\end{construction}

\begin{lem}\label{lem:descent}
    The augmented cosimplicial $\Einfty$-algebra $\THH(R)^\wedge_p\rightarrow \THH(R/\Sphere_p[z,x]^{\otimes_{\Sphere_p}\bullet+1})$ induces an equivalence 
    $$
    \THH(R)^\wedge_p\rightarrow \varprojlim_{\bullet\in \Delta}\THH(R/\Sphere_p[z,x]^{\otimes_{\Sphere_p}\bullet+1}).
    $$
\end{lem}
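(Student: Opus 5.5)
We will deduce the equivalence from the descent criterion of \cite[Proposition 2.14]{MNN17}, as the introduction indicates. Set $A=\THH(R)^\wedge_p$ and $B=\THH(R/\Sphere_p[z,x])$. The augmented cosimplicial object in Construction \ref{construction:descent} is the ($p$-completed) Amitsur complex of the $\Einfty$-map $A\to B$. It therefore suffices to show that $A\to B$ is descendable in the $p$-complete sense, i.e.\ that $A$ lies in the thick $\otimes$-ideal generated by $B$ in $p$-complete $A$-modules. Then the totalization of $B^{\otimes_A\bullet+1}$ (completed) recovers $A$.

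\textbf{Reduce the map to a base change.} By Remark \ref{rem:THH}, $B\simeq \THH(R)\otimes_{\THH(\Sphere_p[z,x])}\Sphere_p[z,x]$ after $p$-completion. Hence $A\to B$ is the base change of the augmentation $\THH(\Sphere_p[z,x])\to\Sphere_p[z,x]$. Descendability is stable under base change, so it suffices to show that $\THH(\Sphere_p[z,x])^\wedge_p\to\Sphere_p[z,x]$ is descendable, $p$-completely. Since $\THH$ is symmetric monoidal, $\THH(\Sphere_p[z,x])\simeq \THH(\Sphere_p)\otimes\THH(\Sphere[z])\otimes\THH(\Sphere[x])$, and a tensor product of descendable maps is descendable. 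Since $\THH(\Sphere_p)^\wedge_p\simeq\Sphere_p$, we reduce to the one-variable augmentation $\THH(\Sphere[z])\to\Sphere[z]$.

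\textbf{One-variable case.} We use $\THH(\Sphere[z])\simeq \Sphere[z]\otimes \Sigma^\infty_+ B^{cy}\N$, where $B^{cy}\N$ is the cyclic bar construction. Then $\THH(\Sphere[z])\simeq \Sphere[z]\otimes_{\Sphere[z]}\ldots$ splits as $\Sphere[z]\otimes(\Sphere\oplus \bigoplus_{m\ge1}\Sigma^\infty_+ S^1/C_m)$. Concretely, $\THH(\Sphere[z])\simeq \Sphere[z]\otimes_{\Sphere[\Omega S^1]}$-type square-zero extension: as an $\Sphere[z]$-algebra it is $\Sphere[z]\otimes \Lambda(\sigma z)$ up to the usual grading, with $\sigma z$ in degree $1$. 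More cleanly, $\Sphere[z]\simeq \THH(\Sphere[z])\otimes_{\Sphere[z]\otimes\Sphere[z]}\Sphere[z]$-style identities show that $\Sphere[z]$ is obtained from $\THH(\Sphere[z])$ by killing one class in degree $1$. This exhibits a finite cofiber sequence $\Sigma\,\THH(\Sphere[z])\to\THH(\Sphere[z])\to\Sphere[z]$ (up to grading) in $\THH(\Sphere[z])$-modules. The fiber of the augmentation is therefore tensor-nilpotent, indeed its $\otimes$-square vanishes after passing to associated graded. This gives descendability via \cite[Proposition 3.20]{Mathew} style arguments.

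An alternative, more robust route, which I would try first if the splitting is messy, uses connectivity. $A\to B$ is a map of connective $p$-complete $\Einfty$-rings, and on $\pi_0$ it is an isomorphism $R\to R$. Its fiber $F$ is $1$-connective, since $\pi_1\THH(\Sphere_p[z,x])\to 0$ is surjective. Then $F^{\otimes_A m}$ is $m$-connective. Hence the Amitsur complex converges by the standard connectivity (Tot-tower) argument: the $n$-th partial totalization differs from $A$ by $F^{\otimes n+1}$, which becomes highly connective. Therefore $\lim$ converges in each homotopy degree, using $p$-completeness and the fact that limits of $p$-complete objects are $p$-complete.

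\textbf{Main obstacle.} The main obstacle is checking the connectivity claim, specifically that $\pi_1\THH(\Sphere[z])\to\pi_1\Sphere[z]=0$ is not an issue and that $\pi_0$ is an isomorphism. This is clear from $\pi_0\THH(\Sphere[z])=\Z[z]$. Compatibility with $p$-completion also needs care; it should follow because all terms are connective, so completion commutes with the relevant tensor products.
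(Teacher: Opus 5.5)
Your ``alternative'' connectivity argument is correct and is essentially the paper's proof. The paper simply notes that the fiber $F$ of $\THH(R)^\wedge_p\to\THH(R/\Sphere_p[z,x])$ is $0$-connected (your ``$1$-connective'') and invokes \cite[Proposition 2.14]{MNN17}, which packages exactly your reasoning: $\fib(A\to\mathrm{Tot}_n)\simeq F^{\otimes_A(n+1)}$ becomes arbitrarily connective.

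Two small repairs are needed there.
\begin{itemize}
\item The cleanest reason for $1$-connectivity is that $\pi_0$ is the identity $R\to R$ and $\pi_1\THH(R/\Sphere_p[z,x])=0$ by evenness (Lemma \ref{lem:Hopf-algebroid} (1)). Alternatively, via base change: $\THH(\Sphere_p[z,x])\to\Sphere_p[z,x]$ is split by the unit map and is an isomorphism on $\pi_0$.
\item Your assertion $\pi_1\Sphere[z]=0$ is false at $p=2$ (the class $\eta$). The splitting makes this irrelevant, so nothing breaks.
\end{itemize}
Your $p$-completion worry is harmless: $\mathrm{Tot}_n$ is a finite limit, and $p$-completion preserves $m$-connectivity.

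The descendability route you lead with should be dropped. It aims to prove more than is needed: connectivity alone gives convergence even for non-descendable maps such as $\Sphere\to H\Z$. Its one-variable step is also wrong as written.
\begin{itemize}
\item One has $\THH(\Sphere[z])\simeq\Sigma^\infty_+B^{\mathrm{cy}}\N$, with no extra factor of $\Sphere[z]$.
\item There is no cofiber sequence $\Sigma\THH(\Sphere[z])\to\THH(\Sphere[z])\to\Sphere[z]$. After tensoring with $\Z$, multiplication by $dz$ on $\pi_*\HH(\Z[z]/\Z)\simeq\Z[z]\otimes\Lambda(dz)$ has cofiber with $\pi_3\simeq\Z[z]$, because $(dz)^2=0$. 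Killing $dz$ requires an infinite Koszul-type resolution.
\item Vanishing of $F^{\otimes 2}$ ``after passing to associated graded'' does not show that $F^{\otimes n}\to A$ is null as a map of $A$-modules, which is what descendability needs.
\end{itemize}
So that route has a genuine gap. The proof should rest on the connectivity argument alone.
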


\begin{proof}
    Notice that the fiber of $\THH(R)^\wedge_p\rightarrow \THH(R/\Sphere_p[z,x])$ is $0$-connected; and we just resort to \cite[Proposition 2.14]{MNN17}. 
\end{proof}

\begin{construction}[{\cite[Page 883-884]{LW22}}]\label{construction:descent-spectral-sequence}
    By Lemma \ref{lem:descent}, the coskeleton filtration of $\THH(R/\Sphere_p[z,x]^{\otimes_{\Sphere_p}\bullet+1})$ gives rise to a multiplicative second quadrant homology type spectral sequence converging to $\pi_*\THH(R)^\wedge_p$:
    $$
    E^1_{i,j}=\pi_j \THH(R/\Sphere_p[z,x]^{\otimes_{\Sphere_p}(-i+1)})\Rightarrow \pi_{i+j}\THH(R)^\wedge_p.
    $$
\end{construction}

\begin{rem}\label{rem:dependence}
    Let $\{E^n_{*,*}\}_{n\geq 1}$ denote the spectral sequence in Construction \ref{construction:descent-spectral-sequence}. It turns out that $\{E^n_{*,*}\}_{n\geq 2}$ depends only on $\THH(R)^\wedge_p$: in fact, $\{E^n_{*,*}\}_{n\geq 2}$ can be identified as the spectral sequence associated to the motivic filtration for $\THH(R)^\wedge_p$ following the idea of \cite[Proposition 7.13]{BMS19}. To see this one might consider the filtration $\varprojlim_{\bullet\in \Delta}\tau_{\geq \star}\THH(R/\Sphere_p[z,x]^{\otimes_{\Sphere_p}\bullet+1})$ on $\THH(R)^\wedge_p$ and resort to \cite[Proposition 7.13]{BMS19}, \cite[Theorem 1.2 (6)]{AKN23}, and \cite[Theorem 2.3.4]{ABBK26}. 
\end{rem}

\begin{lem}\label{lem:Hopf-algebroid}
    The following statements hold.
    \begin{itemize}
        \item[(1)] There is an isomorphism of graded rings with divided power structure
        $$
        \pi_*\THH(R/\Sphere_p[z,x])\simeq R\langle t_{zx}\rangle[u],
        $$
        where $u,t_{zx}$ are the classes of $(z-p),zx\in \calN^{\geq 1}\Z_p[z,x]\{\frac{\phi(zx)}{\phi(z-p)}\}^\wedge_{(p,\calN)}$ respectively, and $t_{zx}^{[p^k]}$, up to the multiplication by a unit, is the class of $(zx)^{(k)}\in \calN^{\geq p^k}\Z_p[z,x]\{\frac{\phi(zx)}{\phi(z-p)}\}^\wedge_{(p,\calN)}$. Note that $|u|=2$ and $|t_{zx}^{[p^k]}|=2p^k$.
        
        \item[(2)] There is an isomorphism of graded rings 
        \begin{align*}
            \pi_*\THH(R/\Sphere_p[z,x]^{\otimes_{\Sphere_p}2})
            & = \pi_*\THH(R/\Sphere_p[z_0,x_0,z_1,x_1]) \\
            & =  R\langle t_{z_1-z_0}, t_{x_1-x_0},t_{z_0x_0}\rangle[u_0],
        \end{align*}
        where $u,t_{z_0x_0},t_{z_1-z_0},t_{x_1-x_0}$ are the classes of $z_0-p,z_0x_0,z_1-z_0,x_1-x_0\in \calN^{\geq 1}\Z_p[z_0,x_0,z_1,x_1]\{\frac{\phi(z_0x_0)}{\phi(z_0-p)},\frac{\phi(z_1-z_0)}{\phi(z_0-p)},\frac{\phi(x_1-x_0)}{\phi(z_0-p)}\}^\wedge_{(p,\calN)}$ respectively, and $t_{z_0x_0}^{[p^k]},t_{z_1-z_0}^{[p^k]},t_{x_1-x_0}^{[p^k]}$, up to the multiplication by a unit, are the classes of $(z_0x_0)^{(k)},(z_1-z_0)^{(k)},(x_1-z_0)^{(k)}\in \calN^{\geq p^k}\Z_p[z_0,x_0,z_1,x_1]\{\frac{\phi(z_0x_0)}{\phi(z_0-p)},\frac{\phi(z_1-z_0)}{\phi(z_0-p)},\frac{\phi(x_1-x_0)}{\phi(z_0-p)}\}^\wedge_{(p,\calN)}$ respectively.

        \item[(3)] The cosimplicial graded ring $\pi_*\THH(R/\Sphere_p[z,x]^{\otimes_{\Sphere_p}\bullet+1})$ is a cogroupoid object determined by the graded Hopf algebroid
        $$
        (\pi_*\THH(R/\Sphere_p[z,x]),\pi_*\THH(R/\Sphere_p[z_0,x_0,z_1,x_1])).
        $$

        \item[(4)] Consider the spectral sequence of Construction \ref{construction:descent-spectral-sequence}. Then $\{E^1_{*,*}\}$ can be identified with the cobar complex for $\pi_*\THH(R/\Sphere_p[z,x])$ with respect to the Hopf algebroid
        $$
        (\pi_*\THH(R/\Sphere_p[z,x]),\pi_*\THH(R/\Sphere_p[z_0,x_0,z_1,x_1])),
        $$
        and therefore 
        $$
        E^2_{i,j}\simeq \Ext^{-i,j}_{\pi_*\THH(R/\Sphere_p[z_0,x_0,z_1,x_1])}(\pi_*\THH(R/\Sphere_p[z,x]),\pi_*\THH(R/\Sphere_p[z,x])).
        $$
    \end{itemize}

\end{lem}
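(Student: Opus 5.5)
For (1) and (2) we apply Corollary \ref{cor:THH-delta-power-Nygaard} with $k=\F_p$, $d=z-p$ and suitable $f_i$. For (1) take $n=1$ and $\B_1=\Sphere_p[z,x]$. Then $d=z-p$ generates a distinguished ideal, since $\delta(z-p)=\delta(z)-\delta(p)+\ldots$ is a unit modulo $(p,z-p)$. We have $\overline{B}_1=\Z_p[z,x]/(z-p)\simeq\Z_p[x]$, and $R=\overline{B}_1/(px)$ with $f_1=zx$, which maps to $px$, a nonzerodivisor in $\Z_p[x]$. The ring $R$ is not $p$-torsion-free, so condition (i) fails and we use condition (ii). We need $\phi(zx)=z^px^p=g_1-g_0$ with $\delta(g_0)=\delta(g_1)=0$. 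Take $g_1=z^px^p$, a monomial in the variables, so $\delta(g_1)=0$, and $g_0=0$.

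To justify this, note that $f_1$ itself is a monomial. We therefore rerun the argument of Corollary \ref{cor:Nygaard-graded-and-Hodge-Tate}(ii) with $A[y]\to A$, $y\mapsto zx$, which is a $\delta$-map because $\delta(zx)=0$, and compare with the $p$-torsion-free case $A[y]/(d,y)$. Via Theorem \ref{thm:THH-for-qrsp-relative-polynomail}(1) this gives $\pi_*\THH(R/\Sphere_p[z,x])\simeq\overline{B}_1[u]\otimes_{\overline{B}_1}R\langle t_{zx}\rangle=R\langle t_{zx}\rangle[u]$, together with the stated identifications of $u$ and $t_{zx}^{[p^k]}$. Evenness follows from the same theorem.

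For (2) take $n=3$, with $\Sphere_p[z,x]^{\otimes 2}=\Sphere_p[z_0,x_0,z_1,x_1]$ and $d=z_0-p$. The kernel of $B_3\to R$ is $(z_0-p,\,z_0x_0,\,z_1-z_0,\,x_1-x_0)$. Modulo $d$ the ring is $\Z_p[x_0,z_1,x_1]$, and $px_0,\ z_1-p,\ x_1-x_0$ form a regular sequence there. Each $f_i$ is a difference of monomials (or a monomial), so $\phi(f_i)$ is again such a difference and condition (ii) holds. Corollary \ref{cor:THH-delta-power-Nygaard} then gives $R\langle t_{z_1-z_0},t_{x_1-x_0},t_{z_0x_0}\rangle[u_0]$.

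For (3) we use the multiplicativity of Remark \ref{rem:THH}:
\[
\THH(R/\Sphere_p[z,x]^{\otimes m+1})\simeq \THH(R/\Sphere_p[z,x]^{\otimes 2})\otimes_{\THH(R/\Sphere_p[z,x])}\cdots\otimes_{\THH(R/\Sphere_p[z,x])}\THH(R/\Sphere_p[z,x]^{\otimes 2}),
\]
with $m$ factors, $p$-completed. The computation in (2) shows that $\pi_*\THH(R/\Sphere_p[z,x]^{\otimes2})$ is flat over $\pi_*\THH(R/\Sphere_p[z,x])$ after $p$-completion: indeed it is a divided power algebra on $t_{z_1-z_0},t_{x_1-x_0}$ over the latter, once we identify $t_{z_0x_0},u_0$ with the images of $t_{zx},u$ under the left unit. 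Hence the Künneth spectral sequence degenerates, and $\pi_*$ of the $m$-th term is the $m$-fold tensor product. This is exactly the cogroupoid condition for a Hopf algebroid.

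The main obstacle I expect is checking flatness and completeness carefully. Divided power algebras over $R$ are free $R$-modules, and each degree is finitely generated, so the $p$-completions are harmless; this should be fine degreewise. Part (4) is then formal. The $E^1$-page with $d^1$ the alternating sum of cofaces is the cobar complex of this Hopf algebroid, so $E^2$ is its Ext, as in \cite[\S4]{LW22}.
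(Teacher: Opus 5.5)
Your proposal is correct and follows essentially the same route as the paper. Parts (1) and (2) are read off from Corollary \ref{cor:THH-delta-power-Nygaard}; you also check its hypotheses, and via $\delta(zx)=0$ and the $\delta$-map $y\mapsto zx$ you use the condition in the form $f_i=g_{i,1}-g_{i,0}$, which is the form the proof of Corollary \ref{cor:Nygaard-graded-and-Hodge-Tate} actually uses. Part (3) comes from multiplicativity of $\THH$, freeness of $\Gamma$ over $A$, and collapse of the Tor spectral sequence, and part (4) is formal; the paper states freeness along both $\eta_L$ and $\eta_R$, but you rightly use only $\eta_L$, which suffices for the K\"unneth collapse.
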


\begin{proof}
    The statements (1) and (2) follow immediately from Corollary \ref{cor:THH-delta-power-Nygaard}. 

    For brevity we set $(A,\Gamma)=(\pi_*\THH(R/\Sphere_p[z,x]),\pi_*\THH(R/\Sphere_p[z_0,x_0,z_1,x_1]))$, and let $\eta_L:A\rightarrow \Gamma$ be the natural map sending $z,x$ to $z_0,x_0$ respectively, and $\eta_R:A\rightarrow \Gamma$ be the natural map sending $z,x$ to $z_1,x_1$ respectively; then both $\eta_L$ and $\eta_R$ exhibit $\Gamma$ as a free and locally finite graded $A$-module. The statement (3) then follows from the multiplicative property of $\THH$ and the Tor-spectral sequence. The statement (4) follows routinely from (3).  
\end{proof}

\begin{notation}\label{notation:Hopf-algebroid}
    By Lemma \ref{lem:Hopf-algebroid} the pair of graded rings 
    $$
    (\pi_*\THH(R/\Sphere_p[z,x]),\pi_*\THH(R/\Sphere_p[z_0,x_0,z_1,x_1]))
    $$
    is naturally a Hopf algebroid. For brevity we set 
    $$
    (A,\Gamma)=(\pi_*\THH(R/\Sphere_p[z,x]),\pi_*\THH(R/\Sphere_p[z_0,x_0,z_1,x_1])).
    $$ 
    We have the following (graded) Hopf algebroid structure maps:
    \begin{itemize}
        \item[(a)] The left unit map $\eta_L:A\rightarrow \Gamma$ sending $z,x$ to $z_0,x_0$, respectively, and the right unit map $\eta_R:A\rightarrow \Gamma$ sending $z,x$ to $z_1,x_1$ respectively. Moreover, both $\eta_L$ and $\eta_R$ exhibit $\Gamma$ as a free and locally finite graded $A$-modules.

        \item[(b)] The comultiplication $\Delta:\Gamma\rightarrow \Gamma\otimes_{\eta_R,A,\eta_L}\Gamma$, where 
        $$
        \Gamma\otimes_{\eta_R,A,\eta_L}\Gamma\simeq \pi_*\THH(R/\Sphere_p[z_0,x_0,z_1,x_1,z_2,x_2]),
        $$
        and $\Delta$ sends $z_0,x_0\in \Gamma$ to $z_0,x_0\in \Gamma\otimes_{\eta_R,A,\eta_L}\Gamma$ and sends $z_1,x_1\in \Gamma$ to $z_2,x_2\in \Gamma\otimes_{\eta_R,A,\eta_L}\Gamma$. Note that $\Delta$ is $\eta_L$-$\eta_R$-bilinear.

        \item[(c)] The counit map $\epsilon:\Gamma\rightarrow A$ sending $z_0,z_1$ to $z$ and $x_0,x_1$ to $x$. 

        \item[(d)] The conjugation map $c:\Gamma\rightarrow \Gamma$ sending $z_i,x_i$ to $z_{1-i},x_{1-i}$, $i=0,1$.
    \end{itemize}
    We regard $A$ as a left $\Gamma$-comodule through $\eta_L$.
\end{notation}

\begin{prop}\label{prop:Hopf-algebroid}
    Consider the Hopf algebroid
    $$
    (A,\Gamma)=(\pi_*\THH(R/\Sphere_p[z,x]),\pi_*\THH(R/\Sphere_p[z_0,x_0,z_1,x_1])).
    $$ 
    Recall the isomorphisms in Lemma \ref{lem:Hopf-algebroid} (1), (2), say, 
    $$
    \pi_*\THH(R/\Sphere_p[z,x])\simeq R\langle t_{zx}\rangle[u],
    $$
    and 
    $$
    \pi_*\THH(R/\Sphere_p[z_0,x_0,z_1,x_1]) =  R\langle t_{z_1-z_0}, t_{x_1-x_0},t_{z_0x_0}\rangle[u_0].
    $$
    The following statements hold.
    \begin{itemize}
        \item[(1)] The maps $\eta_L, \eta_R,\Delta,\epsilon,c$ are compatible with the divided power structures.

        \item[(2)] $\eta_L(u)=u_0$, $\eta_L(t_{zx})=t_{z_0 x_0}$, and $\eta_R(u)=u_0+t_{z_1-z_0}$, and 
        $$
        \eta_R(t_{zx}^{[k]})=\sum_{u,v,w\geq 0,u+v+w=k} (x^ut_{z_1-z_0}^{[u]})\cdot (p^vt_{x_1-x_0}^{[v]})\cdot t_{z_0x_0}^{[w]},\quad \quad k\geq 0.
        $$

        \item[(3)] For $k\geq 0$, $\Delta(t_{z_1-z_0}^{[k]})=\sum_{0\leq j\leq k}t_{z_2-z_1}^{[j]}\cdot t_{z_1-z_0}^{[k-j]}$, and $\Delta(t_{x_1-x_0}^{[k]})=\sum_{0\leq j\leq k}t_{x_2-x_1}^{[j]}\cdot t_{x_1-x_0}^{[k-j]}$. 

        \item[(4)] For $k\geq 0$, $\epsilon(t_{z_1-z_0}^{[k]})=\epsilon(t_{x_1-x_0}^{[k]})=0$.
    \end{itemize}
\end{prop}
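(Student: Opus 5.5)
Every structure map is induced by a map of relative prismatic data, so I will compute it at the level of the prismatic envelopes and then pass to $\gr^*_\calN$ through Corollary \ref{cor:THH-delta-power-Nygaard}. The maps $\eta_L,\eta_R,\Delta,\epsilon,c$ all come from maps of cyclotomic bases among $\Sphere_p[z,x]^{\otimes\bullet+1}$: $z\mapsto z_i$, $x\mapsto x_i$, and so on. On $\pi_0$ these are maps of $\delta$-rings fixing the distinguished element, up to the identity $z_1-p=(z_0-p)+(z_1-z_0)$. By Theorem \ref{thm:THH-for-qrsp-relative-polynomail} (1) and the functoriality of relative prismatic cohomology in the pair (base $\delta$-ring, $R$), each map is identified with the induced map on $\gr^*_\calN$ of the envelopes $\Z_p[z,x]\{\frac{\phi(zx)}{\phi(z-p)}\}^\wedge_{(p,\calN)}$ and its two-variable analogue. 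Because these are maps of $\delta$-rings, they commute with the operations $f\mapsto f^{(k)}$ of Construction \ref{construction:generators-for-envelopes}. They therefore preserve the divided power structures of Remark \ref{rem:divided-power-structure}, which are functorial in the pair; this gives (1).

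For (2), $\eta_L$ sends $z-p\mapsto z_0-p$ and $zx\mapsto z_0x_0$, so $\eta_L(u)=u_0$ and $\eta_L(t_{zx})=t_{z_0x_0}$. For $\eta_R$ we have $z_1-p=(z_0-p)+(z_1-z_0)$, whose class in $\gr^1_\calN$ is $u_0+t_{z_1-z_0}$. Next I expand
$$z_1x_1=z_0x_0+x_0(z_1-z_0)+z_0(x_1-x_0)+(z_1-z_0)(x_1-x_0).$$
In $\gr^1_\calN$ the mixed term $(z_1-z_0)(x_1-x_0)$ lies in $\calN^{\geq 2}$ and drops out. Modulo $\calN^{\geq 1}$ the coefficients reduce to their images in $R$, namely $x_0\mapsto x$ and $z_0\mapsto p$. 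This gives $\eta_R(t_{zx})=xt_{z_1-z_0}+pt_{x_1-x_0}+t_{z_0x_0}$. Since $\eta_R$ respects divided powers by (1), the formula for $\eta_R(t_{zx}^{[k]})$ follows from the rules $(a+b+c)^{[k]}=\sum a^{[u]}b^{[v]}c^{[w]}$ and $(ra)^{[u]}=r^ua^{[u]}$.

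For (3), $\Delta$ sends $z_1-z_0\mapsto z_2-z_0=(z_2-z_1)+(z_1-z_0)$, and similarly for $x$. The degree-one classes are therefore additive, and the divided power binomial formula gives the claim. For (4), $\epsilon$ sends $z_1-z_0$ and $x_1-x_0$ to $0$, so every $f^{(k)}$ built from these elements also maps to $0$, since $\delta(0)=0$ and $\delta^k(\phi(0)/\phi(d))=0$. Hence all the divided powers of these classes vanish.

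The main obstacle is justifying the step from ``the class of an element in $\gr^1_\calN$'' to the stated identities for the divided powers $t^{[p^k]}$. This matters because the identifications $t^{[p^k]}\leftrightarrow f^{(k)}$ in Lemma \ref{lem:Hopf-algebroid} hold only up to units. I would handle it by fixing the divided power structure intrinsically, as the one compatible with the functorial structure of Remark \ref{rem:divided-power-structure}, and then arguing universally. To do so, I reduce to the torsion-free universal case of Corollary \ref{cor:Nygaard-graded-and-Hodge-Tate} (ii), in which the divided powers are determined by the ring structure after inverting $p$. There, compatibility with divided powers is automatic once the degree-one formula is known, and it transfers back along the comparison maps used in the proof of that corollary.
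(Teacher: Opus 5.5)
Your proposal is correct and is essentially the paper's argument: (1) comes from the functoriality of the divided power structures produced by the torsion-free reduction in the proof of Corollary \ref{cor:Nygaard-graded-and-Hodge-Tate}, and (2)--(4) are exactly the routine degree-one computations you give ($z_1-p=(z_0-p)+(z_1-z_0)$, the expansion of $z_1x_1$, $z_2-z_0=(z_2-z_1)+(z_1-z_0)$, $\epsilon(z_1-z_0)=0$) combined with the divided power identities; note that (4) holds only for $k\geq 1$, since $t^{[0]}=1$.

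As you suspected, the delicate point is (1) for $\eta_R$ and $c$: they send $z_0-p$ to $z_1-p$, and they send $zx$ (resp.\ $z_0x_0$) to $z_1x_1$, which is not one of the chosen generators, so ``$\delta$-maps commute with $f\mapsto f^{(k)}$'' does not suffice and there is no evident lift to the universal pairs of that proof. One clean way to make your transfer rigorous is to deform along $\Z_p[w]$ with $\delta(w)=0$: replace $zx$ by $zx-w$ in $\Z_p[z,x,w]$ and $z_0x_0$ by $z_0x_0-w$ in $\Z_p[z_0,x_0,z_1,x_1,w]$, where all the $\gr^*_\calN$ are $p$-torsion-free (so divided powers are unique), the structure maps lift with $w\mapsto w$, and the original situation is recovered by the base change $w\mapsto 0$, which is underived because $w$ acts through the nonzerodivisor $px$.
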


\begin{proof}
    The statement (1) follows from the proof of Corollary \ref{cor:Nygaard-graded-and-Hodge-Tate}. The rest statements are routine.
\end{proof}

Finally, we will determine $E^2_{i,j}$ in Lemma \ref{lem:Hopf-algebroid} (4) using a chain complex simpler than the cobar complex.

\begin{construction}\label{construction:resolution}
    For brevity we set 
    $$
    (A,\Gamma)=(\pi_*\THH(R/\Sphere_p[z,x]),\pi_*\THH(R/\Sphere_p[z_0,x_0,z_1,x_1])),
    $$
    and under the isomorphisms in Lemma \ref{lem:Hopf-algebroid} (1), (2), we have $\Gamma=A\langle t_{z_1-z_0},t_{x_1-x_0}\rangle$, where we regard $\Gamma$ as an $A$-algebra through $\eta_L:A\rightarrow \Gamma$. Consider the (left) $A\langle t_{x_1-x_0}\rangle$-linear map
    $$
    D_z:\Gamma\xrightarrow{t_{z_1-z_0}^{[k]}\mapsto t_{z_1-z_0}^{[k-1]} dz} \Gamma dz,
    $$
    and the (left) $A\langle t_{z_1-z_0}\rangle$-linear map
    $$
    D_x:\Gamma\xrightarrow{t_{x_1-x_0}^{[k]}\mapsto t_{x_1-x_0}^{[k-1]} dx} \Gamma dx.
    $$
    By Proposition \ref{prop:Hopf-algebroid} (3), both $D_z$ and $D_x$ are morphisms of left $\Gamma$-comodules, and $D_x\circ D_z=D_z\circ D_x$. 
    Consider the following complex: 
    $$
    0\rightarrow A\xrightarrow{\eta_L}\Gamma\xrightarrow{(D_z dz,D_xdx)}\Gamma dz\oplus \Gamma dx\xrightarrow{D_x dx \oplus D_z dz} \Gamma (dz\wedge dx)\rightarrow 0,
    $$
    where $dz,dx,dz\wedge dx$, with $|dz|=|dx|=2$ and $|dz\wedge dx|=4$, are technical symbols indicating degree shifts, and by convention $dz\wedge dx=-dx\wedge dz$. It is immediate to see that this complex is a resolution by relative injectives (see \cite[Definition A1.2.10]{Ravenel04}) of the left $\Gamma$-comodule $A$.

    Consider the left $A$-linear map $\widetilde{D}_z=\epsilon\circ D_z$, that is, 
    \begin{align*}
         \widetilde{D}_z:\Gamma\simeq A\langle t_{z_1-z_0},t_{x_1-x_0}\rangle &\rightarrow A \\
         t_{z_1-z_0} &\mapsto 1, \\
         t_{z_1-z_0}^{[k]}\cdot t_{x_1-x_0}^{[k']} &\mapsto 0,\quad \text{$k\neq 1$ or $k'\neq 0$}.
    \end{align*}
    and let $D'_z:=\widetilde{D}_z\circ \eta_R:A\rightarrow A$; more precisely, under the identification in Lemma \ref{lem:Hopf-algebroid} (1), $D_z'$ is the $R$-linear map 
    \begin{align*}
        D_z':R\langle t_{zx}\rangle[u] & \rightarrow  R\langle t_{zx}\rangle[u]  \\
        t_{zx}^{[k]}u^n& \mapsto nt_{zx}^{[k]}u^{n-1} +xt_{zx}^{[k-1]}u^n.
    \end{align*}
    
    Similarly, consider the left $A$-linear map $\widetilde{D}_x=\epsilon\circ D_x$, that is,
     \begin{align*}
         \widetilde{D}_z:\Gamma\simeq A\langle t_{z_1-z_0},t_{x_1-x_0}\rangle &\rightarrow A \\
         t_{x_1-x_0} &\mapsto 1, \\
         t_{z_1-z_0}^{[k]}\cdot t_{x_1-x_0}^{[k']} &\mapsto 0,\quad \text{$k\neq 0$ or $k'\neq 1$}.
    \end{align*}
    and let $D'_x:=\widetilde{D}_x\circ \eta_R:A\rightarrow A$; more precisely, under the identification in Lemma \ref{lem:Hopf-algebroid} (1), $D_x'$ is the $R[u]$-linear map 
    \begin{align*}
        D_x':R\langle t_{zx}\rangle[u] & \rightarrow  R\langle t_{zx}\rangle[u]  \\
        t_{zx}^{[k]} & \mapsto pt_{zx}^{[k-1]},\quad k\geq 0,
    \end{align*}
    where by convention $t_{zx}^{[-1]}=0$.
\end{construction}

\begin{prop}\label{prop:resolution}
    Consider the Hopf algebroid
    $$
    (A,\Gamma)=(\pi_*\THH(R/\Sphere_p[z,x]),\pi_*\THH(R/\Sphere_p[z_0,x_0,z_1,x_1])).
    $$
    Then $\Ext_\Gamma(A,A)$ is computed by the complex 
    $$
    A\xrightarrow{(D'_zdz,D'_x dx)} A dz\oplus A dx\xrightarrow{D'_xdx \oplus D'_zdz} A (dz\wedge dx),
    $$
    where $D'_z,D'_x:A\rightarrow A$ are determined in Construction \ref{construction:resolution}. More precisely, under the identification in Lemma \ref{lem:Hopf-algebroid} (1), $\Ext_\Gamma(A,A)$ is computed by the complex
    $$
    R\langle t_{zx}\rangle[u]\xrightarrow{(D'_zdz,D'_x dx)} R\langle t_{zx}\rangle[u]dz\oplus R\langle t_{zx}\rangle[u]dx\xrightarrow{D'_xdx\oplus D'_zdz} R\langle t_{zx}\rangle[u](dz\wedge dx),
    $$
    where $D'_z: R\langle t_{zx}\rangle[u]\rightarrow  R\langle t_{zx}\rangle[u]$ is the $R$-linear map sending $t_{zx}^{[k]}u^n$ to $nt_{zx}^{[k]}u^{n-1}+xt_{zx}^{[k-1]}u^n$, and $D'_x: R\langle t_{zx}\rangle[u]\rightarrow  R\langle t_{zx}\rangle[u]$ is the $R[u]$-linear map sending $t_{zx}^{[k]}$ to $pt_{zx}^{[k-1]}$.
\end{prop}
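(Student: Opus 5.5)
The plan is to use the resolution of Construction~\ref{construction:resolution}. By Lemma~\ref{lem:Hopf-algebroid} (4), $\Ext_\Gamma(A,A)$ is computed by applying $\Hom_\Gamma(A,-)$ to any resolution of the comodule $A$ by relative injectives. So the proof has three steps: verify that the complex $0\to A\to\Gamma\to\Gamma dz\oplus\Gamma dx\to\Gamma(dz\wedge dx)\to 0$ is such a resolution, apply $\Hom_\Gamma(A,-)$ to it term by term, and compute the induced differentials.

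\emph{Step 1: the resolution.} Each term is an extended comodule $\Gamma\otimes_A M$, with $M$ a free $A$-module placed in the appropriate degree, so each term is relatively injective (\cite[Lemma A1.2.8]{Ravenel04}). The maps are comodule maps by Proposition~\ref{prop:Hopf-algebroid} (3): the coproducts of $t_{z_1-z_0}^{[k]}$ and $t_{x_1-x_0}^{[k]}$ are the divided-power binomial formulas, and these commute with lowering the divided power index by one in the right-hand tensor factor.

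For exactness, regard $\Gamma$ via $\eta_L$ as $A\langle t_{z_1-z_0}\rangle\otimes_A A\langle t_{x_1-x_0}\rangle$. The complex is then the tensor product of two two-term complexes $A\langle t\rangle\xrightarrow{t^{[k]}\mapsto t^{[k-1]}}A\langle t\rangle$, augmented by $A$. Each factor is a resolution of $A$, since the map is surjective with kernel $A\cdot t^{[0]}$. Because both factors are free over $A$, the Künneth argument gives exactness of the tensor product, and the whole augmented complex is a resolution of $A$.

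\emph{Step 2: identify the Hom complex.} Use $\Hom_\Gamma(A,\Gamma\otimes_A M)\cong M$, where a comodule map $f$ corresponds to $(\epsilon\otimes 1)\circ f$, and $f$ is recovered from $m\in M$ as $a\mapsto \eta_R(a)\cdot m$ (up to the left/right convention fixed in Notation~\ref{notation:Hopf-algebroid}). Under these isomorphisms, the differential $\Hom_\Gamma(A,\Gamma)\to\Hom_\Gamma(A,\Gamma dz)$ sends $a\in A$ to $\epsilon(D_z(\eta_R(a)))=\widetilde{D}_z(\eta_R(a))=D'_z(a)$, and the same holds for $x$. The degree-one and degree-two differentials are handled identically and give $D'_x\oplus D'_z$. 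This yields the first displayed complex in the statement.

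\emph{Step 3: explicit formulas.} Compute $D'_z$ and $D'_x$ from Proposition~\ref{prop:Hopf-algebroid} (2). From $\eta_R(u)=u_0+t_{z_1-z_0}$ we get $\eta_R(u^n)=\sum_j\binom{n}{j}j!\,u_0^{n-j}t_{z_1-z_0}^{[j]}$. From the formula for $\eta_R(t_{zx}^{[k]})$, the coefficient of $t_{z_1-z_0}^{[1]}t_{x_1-x_0}^{[0]}$ in $\eta_R(t_{zx}^{[k]}u^n)$ is $n\,t^{[k]}u^{n-1}+x\,t^{[k-1]}u^n$. Similarly, the coefficient of $t_{x_1-x_0}^{[1]}$ is $p\,t^{[k-1]}u^n$, because $u$ contributes no $t_{x_1-x_0}$ term. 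Reading these coefficients off via $\widetilde{D}_z$ and $\widetilde{D}_x$ gives the stated formulas for $D'_z$ and $D'_x$.

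\emph{Main obstacle.} The delicate point is the compatibility of divided powers, which Proposition~\ref{prop:Hopf-algebroid} (1) supplies, together with sign and convention bookkeeping. Concretely, one must check that the identification $\Hom_\Gamma(A,\Gamma\otimes M)\cong M$ really produces $\epsilon\circ D\circ\eta_R$ rather than a version twisted by the conjugation $c$, and that the resolution in Step 1 is one of left comodules with respect to the chosen $\eta_L$-structure.
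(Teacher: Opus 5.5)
Your proposal follows the paper's proof: you take the relative injective resolution $0\to A\to\Gamma\to\Gamma dz\oplus\Gamma dx\to\Gamma(dz\wedge dx)\to 0$ from Construction~\ref{construction:resolution}, apply $\Hom_\Gamma(A,-)$, and use $\Hom_\Gamma(A,\Gamma\otimes_A M)\cong M$ to identify the differentials with $\epsilon\circ D\circ\eta_R=D'$; your exactness, comodule-map and coefficient checks just spell out what the paper calls immediate or routine. One small correction: with the paper's convention ($A$ a left comodule via $\eta_L$), the comodule map attached to $m$ is $a\mapsto\eta_L(a)\otimes m$, i.e.\ $\eta_L(a)\eta_R(m)$ in $\Gamma$, and not $a\mapsto\eta_R(a)m$, which is not left $A$-linear; since both have value $\eta_R(m)$ at $1$, your formula $D'_z=\widetilde{D}_z\circ\eta_R$ (and likewise for $D'_x$) is unaffected.
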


\begin{proof}
    By the relative injective resolution  
    $$
    0\rightarrow A\xrightarrow{\eta_L}\Gamma\xrightarrow{(D_z dz,D_xdx)}\Gamma dz\oplus \Gamma dx\xrightarrow{D_x dx \oplus D_z dz} \Gamma (dz\wedge dx)\rightarrow 0,
    $$
    $\Ext_\Gamma(A,A)$ is computed by the complex 
    $$
    \Hom_\Gamma(A,\Gamma)\rightarrow \Hom_\Gamma(A,\Gamma)dz\oplus \Hom_\Gamma(A,\Gamma)dx\rightarrow \Hom_\Gamma(A,\Gamma)(dz\wedge dx), 
    $$
    and by \cite[Definition A1.2.1]{Ravenel04}, the above complex is identified as 
    $$
    A\xrightarrow{(D'_zdz,D'_x dx)} A dz\oplus A dx\xrightarrow{D'_xdx \oplus D'_zdz} A (dz\wedge dx).
    $$
\end{proof}

\begin{cor}\label{cor:degenerate}
    The spectral sequence in Construction \ref{construction:descent-spectral-sequence} computing $\pi_*\THH(R)^\wedge_p$ collapses at the $E^2$-term.
\end{cor}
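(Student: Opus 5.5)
The collapse will be read off from the shape of the $E^2$-page, so the plan is a sparseness argument. By Lemma \ref{lem:Hopf-algebroid} (4), $E^2_{i,j}\simeq \Ext^{-i,j}_\Gamma(A,A)$. By Proposition \ref{prop:resolution}, these groups are computed by the three-term complex
$$
A\xrightarrow{(D'_zdz,D'_xdx)} A\,dz\oplus A\,dx\xrightarrow{D'_xdx\oplus D'_zdz} A\,(dz\wedge dx).
$$
This complex has length two, so $E^2_{i,j}=0$ unless $i\in\{0,-1,-2\}$. The internal grading comes from $A=R\langle t_{zx}\rangle[u]$, with $|u|=2$, $|t_{zx}^{[k]}|=2k$, $|dz|=|dx|=2$ and $|dz\wedge dx|=4$. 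Every term of the complex is therefore concentrated in even internal degree. Since $D'_z$ and $D'_x$ are homogeneous maps, $E^2_{i,j}=0$ whenever $j$ is odd.

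Next I would check the bidegrees of the differentials. In a homology-type spectral sequence coming from the coskeleton filtration of a cosimplicial object, the differential has the form $d^r:E^r_{i,j}\to E^r_{i-r,j+r-1}$, since it lowers total degree $i+j$ by one. I will confirm this convention against Construction \ref{construction:descent-spectral-sequence}. For $r\ge 3$, the horizontal jump $i\mapsto i-r$ exceeds the width of the strip $-2\le i\le 0$, so source or target vanishes. For $r=2$, the map goes $E^2_{0,j}\to E^2_{-2,j+1}$. Since $j$ and $j+1$ have opposite parity, one of the two groups is zero by the previous paragraph, so $d^2=0$. All differentials from $E^2$ on therefore vanish, and $E^2=E^\infty$.

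Finally, I would address convergence. By Lemma \ref{lem:descent} the totalization recovers $\THH(R)^\wedge_p$. Since the $E^2$-page is concentrated in finitely many columns, the spectral sequence converges strongly, and collapse at $E^2$ is meaningful. The only real obstacle is bookkeeping: making sure the even/odd degrees (the shifts by $dz$ and $dx$ are even) and the direction of the differentials match the paper's conventions. Once that is checked, the argument is immediate.
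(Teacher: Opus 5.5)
Your proposal is correct and matches the paper's argument. The paper also combines Lemma \ref{lem:Hopf-algebroid} (4) with Proposition \ref{prop:resolution} to get $E^2_{i,j}=0$ for $i\notin\{0,-1,-2\}$ or $j$ odd, and then concludes collapse ``by degree reasons''. Your explicit check that $d^r\colon E^r_{i,j}\to E^r_{i-r,j+r-1}$ vanishes (parity for $r=2$, column width for $r\geq 3$) simply spells out what the paper leaves implicit.
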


\begin{proof}
    Combining Lemma \ref{lem:Hopf-algebroid} (4) and Proposition \ref{prop:resolution}, we see that $E^2_{i,j}=0$ if $i\neq 0,-1,-2$ or $j$ is odd (note that the graded ring $A=R\langle t_{zx}\rangle[u]$ is concentrated in even degrees since $|u|=2$ and $|t_{zx}^{[k]}|=2k$). Hence, the spectral sequence collapses at the $E^2$-term simply by degree reason.
\end{proof}

\section{A Computation of \texorpdfstring{$\THH(\Z_p[x]/(px))^\wedge_p$}{THH(Zp[x]/px)}}\label{section:computation-of-THH-Zpx-mod-px}

In this final section we would like to compute the complex in Proposition \ref{prop:resolution}:      
    $$
    E^\bullet:=(R\langle t_{zx}\rangle[u]\xrightarrow{(D'_zdz,D'_x dx)} R\langle t_{zx}\rangle[u]dz\oplus R\langle t_{zx}\rangle[u]dx\xrightarrow{D'_xdx \oplus D'_zdz} R\langle t_{zx}\rangle[u](dz\wedge dx)),
    $$
    where $R=\Z_p[x]/(px)$, and $D'_z: R\langle t_{zx}\rangle[u]\rightarrow  R\langle t_{zx}\rangle[u]$ is the $R$-linear map sending $t_{zx}^{[k]}u^n$ to $nt_{zx}^{[k]}u^{n-1} +xt_{zx}^{[k-1]}u^n $, and $D'_x: R\langle t_{zx}\rangle[u]\rightarrow  R\langle t_{zx}\rangle[u]$ is the $R[u]$-linear map sending $t_{zx}^{[k]}$ to $pt_{zx}^{[k-1]}$. Then we would be able to compute $\pi_*\THH(\Z_p[x]/(px))^\wedge_p$.

\begin{computation}[$H^0(E^\bullet)$]\label{computation:H0}
    We would like to determine the kernel of 
    $$
    (D_z'dz,D_x'dx):R\langle t_{zx}\rangle[u]\rightarrow R\langle t_{zx}\rangle[u]dz\oplus R\langle t_{zx}\rangle[u]dx.
    $$
    Since this map is homogeneous, it suffices to consider for an integer $l\geq 0$ the map 
    $$
    (D_z'dz,D_x'dx)_l: \oplus_{n+k=l}Rt_{zx}^{[k]}u^{n} \rightarrow (\oplus_{n+k=l-1}Rt_{zx}^{[k]}u^{n} dz)\oplus(\oplus_{n+k=l-1}Ru^{n}t_{zx}^{[k]} dx).
    $$
    Consider an element $\sum_{n,k}f_{n,k}t_{zx}^{[k]}u^n$, where $f_{n,k}\in R=\Z_p[x]/(px)$. By Construction \ref{construction:resolution}, we have 
    \begin{align*}
    (D'_z,D'_x )(\sum f_{n,k}t_{zx}^{[k]}u^n )
    & =(\sum_{n,k}(nf_{n,k}t_{zx}^{[k]}u^{n-1} +xf_{n,k}t_{zx}^{[k-1]}u^n ),\sum_{n,k}f_{n,k}pt_{zx}^{[k-1]}u^n)\\
    & =(\sum_{n,k}((n+1)f_{n+1,k}+xf_{n,k+1})t_{zx}^{[k]}u^n,\sum_{n,k}pf_{n,k+1}t_{zx}^{[k]}u^n),
    \end{align*}
    and $(D'_z,D'_x )(\sum f_{n,k}t_{zx}^{[k]}u^n)=0$ if and only if the following conditions are satisfied: 
    \begin{itemize}
        \item[(i)] $(n+1)f_{n+1,k}+xf_{n,k+1}=0$, $n,k\geq 0$, and

        \item[(ii)] $pf_{n,k}=0$, $n\geq 0,k\geq 1$.
    \end{itemize}
    
    We further make the following claim:
    \begin{claim}\label{claim:H0}
        Suppose that $l=pr+s$ with $r\geq 0$ and $0\leq s\leq p-1$. Then an element $\sum_{n+k=l} f_{n,k}t_{zx}^{[k]}u^n $ lies in the kernel of $(D_z'dz,D_x'dx)_l$ if and only if the following conditions are satisfied:
        \begin{itemize}
            \item[(1)] For $n,k\geq 0$ with $n+k=l$, $f_{n,k}\in xR$.

            \item[(2)] For $n<pr$, $f_{n,l-n}=0$. 

            \item[(3)] Suppose that $n=pr+s'$ with $1\leq s'\leq s$, then
            $$
            f_{n,l-n}=-\frac{x}{n}f_{n-1,l-n+1}=\frac{x^2}{n(n-1)}f_{n-2,l-n+2}=\cdots=(-1)^{s}\frac{x^{s}}{n(n-1)\cdots(pr+1)}f_{pr,s}.
            $$
        \end{itemize}
    \end{claim}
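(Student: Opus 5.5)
The plan is to analyse the two kernel conditions (i) and (ii) from Computation \ref{computation:H0} directly in the fixed degree $l$. I will assume $l\geq 1$: for $l=0$ the kernel is all of $R$, since $D'_z(1)=D'_x(1)=0$, which is consistent with $E^2_{0,0}\simeq R$. The structural input about $R=\Z_p[x]/(px)$ is the following.
\begin{itemize}
\item[(a)] An element of $R$ is killed by $p$ if and only if it lies in $xR$.
\item[(b)] $xR\simeq x\F_p[x]$ is an $\F_p[x]$-module on which multiplication by $x$ is injective.
\item[(c)] An integer acts on $xR$ through its residue mod $p$.
\end{itemize}
All three follow from writing an element of $R$ as $a+xg$ with $a\in\Z_p$ and $g\in\F_p[x]$.

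\textbf{Step 1: condition (1).} For $k\geq 1$, condition (ii) together with (a) gives $f_{n,k}\in xR$. For the remaining coefficient $f_{l,0}$, I use condition (i) with $(n,k)=(l-1,0)$, which reads $lf_{l,0}+xf_{l-1,1}=0$. Since $f_{l-1,1}\in xR$, the constant term of $lf_{l,0}$ must vanish. As $\Z_p$ is torsion-free and $l\geq1$, this forces $f_{l,0}\in xR$.

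\textbf{Step 2: the recursion along $n$.} Once every $f_{n,l-n}$ lies in $xR$, condition (i) on the diagonal $n+k=l-1$ becomes $(n+1)f_{n+1,l-n-1}=-xf_{n,l-n}$ in the $\F_p[x]$-module $xR$, for $0\leq n\leq l-1$.
\begin{itemize}
\item If $p\mid n+1$, the left side vanishes. By (b) this gives $f_{n,l-n}=0$.
\item If $p\nmid n+1$, then $n+1$ is invertible, and $f_{n+1,l-n-1}=-\frac{x}{n+1}f_{n,l-n}$.
\end{itemize}

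\textbf{Step 3: splitting into blocks.} I group the indices into blocks $pm\leq n\leq pm+p-1$. The relation with $n=pm-1$ imposes nothing on $f_{pm,l-pm}$. Within a block, the recursion expresses $f_{pm+j,\,l-pm-j}$ as $(-1)^j\frac{x^j}{(pm+j)\cdots(pm+1)}f_{pm,l-pm}$.
\begin{itemize}
\item If the whole block lies in $[0,l-1]$, that is $m\leq r-1$, then the top entry $n=pm+p-1$ satisfies $p\mid n+1$, so it vanishes. This gives $x^{p-1}f_{pm,l-pm}=0$, hence $f_{pm,l-pm}=0$ by (b), and the entire block vanishes. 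This is condition (2).
\item For the last block $m=r$, the indices run from $pr$ to $pr+s=l\leq pr+p-1$. No index $n\leq l-1$ in this block has $p\mid n+1$, so no vanishing is forced, and the recursion gives exactly the formula in condition (3).
\end{itemize}

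\textbf{Step 4: the converse.} Given coefficients satisfying (1)--(3), I check (ii) and (i) directly.
\begin{itemize}
\item Condition (ii) holds by (1) and (a).
\item Condition (i) holds trivially below $pr$, where both terms are zero.
\item At $n=pr-1$ it reduces to $pr\,f_{pr,s}=0$, which holds because $f_{pr,s}\in xR$ and $p$ kills $xR$.
\item Inside the last block it holds by construction of (3).
\end{itemize}

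The main point needing care is Step 1, namely showing that the coefficient $f_{l,0}$, which is not constrained by (ii), still lies in $xR$. After that, the argument is bookkeeping with the non-zero-divisor $x$ on $xR$.
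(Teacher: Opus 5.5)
Your proof is correct and follows essentially the paper's route: a direct case analysis of conditions (i) and (ii) according to whether $p$ divides the relevant index. Your constant-term argument for $f_{l,0}$ and your use of injectivity of $x$ on $xR$ are slightly cleaner versions of the paper's case split and of its use of $pR\cap xR=0$. You are also right to restrict to $l\geq 1$, since for $l=0$ condition (1) would wrongly exclude $R\setminus xR$ from the kernel. Your block formula has the correct exponent $s'=n-pr$, where the stated claim has the typo $s$.
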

    \begin{proof}[Proof of Claim \ref{claim:H0}]
        The sufficiency of conditions in Claim \ref{claim:H0} is routine. We check the necessity as follows.
   
    We first check the necessity of Claim \ref{claim:H0} (1).
    Condition (ii) requires that $f_{n,k}\in xR$, $n\geq 0,k\geq 1$. By condition (i), we have $nf_{n,0}+xf_{n-1,1}=0$; if $p$ does not divide $n$, then $f_{n,0}\in xR$; if $p|n$, then $xf_{n-1,1}\in pR$ and we must have $xf_{n-1,1}=0$ and $nf_{n,0}=0$, whereby $f_{n,0}\in xR$. 
    
    Then we check the necessity of Claim \ref{claim:H0} (2). For any $n<pr$, we write $n=pr'+s'$ with $0\leq r'<r$ and $0\leq s'\leq p-1$. By by Claim \ref{claim:H0} (1) we have $p(r'+1)f_{p(r'+1),l-p(r'+1)}=0$, then condition (i) requires that $xf_{pr'+p-1,l-pr'-p+1}=0$ or equivalently $f_{pr'+p-1,l-pr'-p+1}\in pR$; with Claim \ref{claim:H0} (1) we must have $f_{pr'+p-1,l-pr'-p+1}=0$. Following the same logic, a recursive application of condition (i) implies that $f_{n,l-n}=0$.
  
    Finally, we check the necessity of Claim \ref{claim:H0} (3). If $p$ does not divide $n$, then condition $(i)$ implies that $f_{n,k}=-\frac{x}{n}f_{n-1,k+1}$; therefore, for $n=pr+s'$ where $1\leq s'\leq p-1$ we have 
    $$
        f_{n,l-n}=-\frac{x}{n}f_{n-1,l-n+1}=\frac{x^2}{n(n-1)}f_{n-2,l-n+2}=\cdots=(-1)^{s}\frac{x^{s}}{n(n-1)\cdots(pr+1)}f_{pr,s}.
    $$
    \end{proof}
    
    Now, for $l=pr+s\geq 1$ with $r\geq 0$ and $0\leq s\leq p-1$, we set 
    \begin{align*}
        \epsilon_{l}
        &:=\sum_{n+k=pr+s,pr\leq n\leq pr+s}(-1)^{n-pr}\frac{x^{n-pr}}{n(n-1)\cdots(pr+1)} t_{zx}^{[k]}u^n  \\
        & = \sum_{n+k=l,p\lfloor\frac{l}{p}\rfloor\leq n\leq l}(-1)^{n-p\lfloor\frac{l}{p}\rfloor}\frac{x^{n-p\lfloor\frac{l}{p}\rfloor}}{n(n-1)\cdots(p\lfloor\frac{l}{p}\rfloor+1)} t_{zx}^{[k]}u^n ,
    \end{align*}
    then by Claim \ref{claim:H0} we have 
    \begin{align*}
        H^0(E)
        & \simeq R\bigoplus_{l\geq 1}R\cdot x\epsilon_l \\
        & \simeq R\bigoplus_{l\geq 1}\F_p[x]x\epsilon_l. 
    \end{align*}

    %If $n=k=0$, then $(D'_z,D'_x )(fu^n\cdot t_{zx}^{[k]})=0$.

    %If $k=0$, $n>0$, then $(D'_z,D'_x )(fu^n\cdot t_{zx}^{[k]})=0$ if and only if $nf=0$, and this is possible only when $f\in xR$ and $p|n$.

    %If $k>0$, $n=0$, then $(D'_z,D'_x )(fu^n\cdot t_{zx}^{[k]})=0$ if and only if $xf=pf=0$, that is, if and only if $f=0$. 

    %If $k>0$, $n>0$, then $(D'_z,D'_x )(fu^n\cdot t_{zx}^{[k]})=0$ if and only if $nf=xf=pf=0$, that is, if and only if $f=0$. 

    %Therefore, $H^0(E)\simeq R\oplus_{n\geq 1}xRu^{pn}$, where $|u|=2$. 
\end{computation}

\begin{computation}[$H^2(E^\bullet)$]\label{computation:H2}
    We would like to determine the cokernel of 
    $$
    D_x'dx\oplus D_z'dz:R\langle t_{zx}\rangle[u]dz\oplus R\langle t_{zx}\rangle[u]dx\rightarrow R\langle t_{zx}\rangle[u](dz\wedge dx)).
    $$
    Since this map is homogeneous, it suffices to consider for an integer $l\geq 0$ the map 
    $$
    (D_x'dx\oplus D_z'dz)_{l+1}: (\oplus_{n+k=l+1}Rt_{zx}^{[k]}u^{n} dz)\oplus (\oplus_{n+k=l+1}Rt_{zx}^{[k]}u^{n} dx)\rightarrow \oplus_{n+k=l}Rt_{zx}^{[k]}u^{n} (dz\wedge dx).
    $$
    Note that the subscript of $(D_x'dx\oplus D_z'dz)_{l+1}$ concerns only $t_{zx}^{[k]}u^n $ and does not indicate the correct degree in the graded complex $E^\bullet$.
    
    Consider a pair $(\sum_{n+k=l+1}f_{n,k}t_{zx}^{[k]}u^n,\sum_{n+k=l+1}g_{n,k}t_{zx}^{[k]}u^n)$, where $f_{n,k},g_{n,k}\in R=\Z_p[x]/(px)$. By Construction \ref{construction:resolution}, we have 
    \begin{align*}
   \ & (-D_x'\oplus D_z')(\sum_{n,k}f_{n,k}t_{zx}^{[k]}u^n ,\sum_{n,k}g_{n,k}t_{zx}^{[k]}u^n))\\
    & = -\sum_{n,k}f_{n,k}pt_{zx}^{[k-1]}u^n + \sum_{n,k}(ng_{n,k}t_{zx}^{[k]}u^{n-1} +xg_{n,k}t_{zx}^{[k-1]}u^n) \\
    & = \sum_{n,k}(-pf_{n,k+1}+(n+1)g_{n+1,k}+xg_{n,k+1})t_{zx}^{[k]}u^n ,
    \end{align*}
    and since the $f_{n,k}$'s can be arbitrary, it suffices to consider the cokernel of 
    $$
        (D_z'dz)_{l+1} :  \oplus_{n+k=l+1}Rt_{zx}^{[k]}u^{n} dx
        \rightarrow \oplus_{n+k=l}\F_p[x]t_{zx}^{[k]}u^{n} (dz\wedge dx).
    $$
    We make the following claim:
    \begin{claim}\label{claim:H2}
    Suppose that $l+1=pr+s$ with $r\geq 0$ and $0\leq s\leq p-1$, then an element $\sum_{n+k=l}h_{n,k}t_{zx}^{[k]}u^n dz\wedge dx)$, where $h_{n,k}\in \F_p[x]$, lies in the image of $(D_z'dz)_{l+1}$ if and only if the following condition is satisfied:
    \begin{itemize}
        %\item[(1)] For $n\geq pr$, $h_{n,k}$ can be arbitrary. 

        \item[($\star$)] For $1\leq i\leq r$, 
        \begin{align*}
         h_{pi-1,p(r-i)+s}-\frac{x}{pi-1}h_{pi-2,p(r-i)+s+1}+\frac{x^2}{(pi-1)(pi-2)}h_{pi-3,p(r-i)+s+2}\\
         +\cdots +(-1)^{p-1}\frac{x^{p-1}}{(pi-1)\cdots(p(i-1)+1)}h_{p(i-1),p(r-i+1)+s-1}\in x^p\F_p[x],
        \end{align*}
        or equivalently, in $\F_p[x]/(x^p)$, 
        \begin{align*}
         h_{pi-1,p(r-i)+s}-\frac{x}{pi-1}h_{pi-2,p(r-i)+s+1}+\frac{x^2}{(pi-1)(pi-2)}h_{pi-3,p(r-i)+s+2}\\
         +\cdots +(-1)^{p-1}\frac{x^{p-1}}{(pi-1)\cdots(p(i-1)+1)}h_{p(i-1),p(r-i+1)+s-1}=0.
        \end{align*}
    \end{itemize}
    As a consequence, the following $\F_p[x]$-linear map 
    \begin{align*}
        & \oplus_{n+k=l}\F_p[x] t_{zx}^{[k]}u^{n}(dz\wedge dx) \rightarrow  \oplus_{1\leq i\leq t}(\F_p[x]/(x^p)) t_{zx}^{[p(t-i)+s]}u^{pi-1}(dz\wedge dx) \\
        &\sum_{n+k=l} h_{n,k}t_{zx}^{[k]}u^n(dz\wedge dx)  \mapsto \\ &\sum_{1\leq i\leq t} (\sum_{j=1}^{p}\frac{(-x)^{j-1}}{(pi-1)\cdots(pi-j+1)} h_{pi-j,p(r-i)+s+j-1})t_{zx}^{[p(r-i)+s]}u^{pi-1} (dz\wedge dx),
    \end{align*}
    induces an isomorphism $\coker(D_z' dz)\simeq \oplus_{1\leq i\leq t}(\F_p[x]/(x^p))t_{zx}^{[p(r-i)+s]}u^{pi-1} (dz\wedge dx)$.
    \end{claim}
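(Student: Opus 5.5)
The plan is to reduce the statement to a bidiagonal linear system over $\F_p[x]$ and solve it block by block, the blocks being the residue classes of $n$ between consecutive multiples of $p$.

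First, I would fix the indexing. Since $n+k$ is fixed, write $h_n:=h_{n,l-n}$ for $0\leq n\leq l$ and $g_n:=g_{n,l+1-n}$ for $0\leq n\leq l+1$. By the formula for $D_z'$ in Construction \ref{construction:resolution}, read modulo $p$ (the $f_{n,k}$-part already absorbs $pR$, and $R/p=\F_p[x]$), the element $\sum h_n t_{zx}^{[l-n]}u^n(dz\wedge dx)$ lies in the image of $(D_z'dz)_{l+1}$ if and only if there are $g_n\in\F_p[x]$ with
$$
h_n=(n+1)g_{n+1}+xg_n \quad\text{in } \F_p[x],\qquad 0\leq n\leq l.
$$
The coefficient $n+1$ is a unit in $\F_p$ exactly when $n\not\equiv -1 \pmod p$. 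Hence the system splits into decoupled blocks $\{p(i-1),\ldots,pi-1\}$ for $1\leq i\leq r$, together with a final partial block $\{pr,\ldots,l\}$. Here I use $l=pr+s-1$; when $s=0$ this partial block is empty.

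Next, I would solve inside each block. Let $1\leq i\leq r$. The variable $g_{p(i-1)}$ is free, because the last equation of the preceding block, $h_{p(i-1)-1}=xg_{p(i-1)-1}$, does not involve it. For $p(i-1)\leq n<pi-1$ the equation is solved recursively by
$$
g_{n+1}=\frac{h_n-xg_n}{n+1}.
$$
Unwinding the recursion expresses $g_{pi-1}$ as an alternating combination of $h_{pi-2},\ldots,h_{p(i-1)}$ with coefficients $\frac{(-x)^{j}}{(pi-1)\cdots}$, plus a term $c\,x^{p-1}g_{p(i-1)}$ with $c\in\F_p^\times$ (a product of inverses of $1,\ldots,p-1$ up to sign). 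The last equation of the block, $h_{pi-1}=xg_{pi-1}$, then reads
$$
\sum_{j=1}^{p}\frac{(-x)^{j-1}}{(pi-1)\cdots(pi-j+1)}\,h_{pi-j}\;=\;c'\,x^{p}\,g_{p(i-1)}
$$
for a unit $c'$. Because $g_{p(i-1)}$ is arbitrary, this equation is solvable if and only if the left side lies in $x^p\F_p[x]$, which is exactly condition $(\star)$. I would double-check the normalization of the coefficients directly against the displayed form of $(\star)$.

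For the partial block ($s\geq1$), every equation has a unit leading coefficient, the last being $h_l=(pr+s)g_{l+1}+xg_l$, and $g_{l+1}$ is free, so these equations impose no condition. Sufficiency follows by running the same recursion forward.

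Finally, I would identify the cokernel. The stated map, which sends the element to its block sums modulo $x^p$ (the ``$t$'' in the statement should read $r$), is surjective, since one can take $h_{pi-1}$ arbitrary and all other $h_n$ zero. By the previous step its kernel is exactly the image, so it induces the isomorphism $\coker(D_z'dz)\simeq\oplus_{1\leq i\leq r}\F_p[x]/(x^p)$.

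The main obstacle is bookkeeping rather than any conceptual step: tracking the exact rational coefficients $\frac{(-x)^{j-1}}{(pi-1)\cdots(pi-j+1)}$ through the recursion, and confirming that the coefficient of $g_{p(i-1)}$ is a unit multiple of $x^p$, so that this free variable contributes exactly $x^p\F_p[x]$ to the block equation.
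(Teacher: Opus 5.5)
Your proposal is correct and follows essentially the same route as the paper: you reduce modulo the image of $D_x'$ to the bidiagonal system $h_n=(n+1)g_{n+1}+xg_n$ over $\F_p[x]$, observe that it decouples at the indices with $p\mid n+1$ (where the equation becomes $h_n=xg_n$), and solve block by block. The only differences are cosmetic: you solve forward from the free variable $g_{p(i-1)}$, which makes explicit that it contributes a unit multiple of $x^p g_{p(i-1)}$, and you also verify surjectivity for the cokernel isomorphism; the paper instead back-solves starting from $g_{pi-1}=h_{pi-1}/x$.
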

    \begin{proof}[Proof of Claim \ref{claim:H2}]
        In fact, for an element $\sum_{n+k=l}h_{n,k}u^n\cdot t_{zx}^{[k]}(dz\wedge dx)$ to lie in the image of $D_z'dz$, we need to find $g_{n,k}\in \F_p[x]$, such that $(n+1)g_{n+1,k}+xg_{n,k+1}=h_{n,k}$, and more precisely, 
    \begin{itemize}
        \item[(i)] If $p|n+1$, then $xg_{n,k+1}=h_{n,k}$. 

        \item[(ii)] If $p$ does not divides $n+1$, then $g_{n+1,k}=\frac{1}{n+1}h_{n,k}-\frac{x}{n+1}g_{n,k+1}$. 
    \end{itemize}
    It is then immediate to see that condition ($\star$) in Claim \ref{claim:H2} is necessary. Conversely, suppose that condition ($\star$) is satisfied, and notice that $\F_p[x]$ is $x$-torsion-free, we just take $g_{pi-1,p(t-i)+s+1}=\frac{h_{pi-1,p(t-i)+s}}{x}$, $1\leq i\leq t$, and the rest $g_{n,k}$'s can all be solved.
    \end{proof}

    Hence, we have 
    \begin{align*}
    H^2(E)
    &\simeq \bigoplus_{l+1=pr+s, r\geq 0,0\leq s\leq p-1}\oplus_{1\leq i\leq r}(\F_p[x]/(x^p))t_{zx}^{[p(r-i)+s]}u^{pi-1} (dz\wedge dx)\\
    &\simeq \bigoplus_{l\geq 0}\oplus_{1\leq i\leq \lfloor\frac{l+1}{p}\rfloor}(\F_p[x]/(x^p))t_{zx}^{l-pi+1}u^{pi-1} (dz\wedge dx).
    \end{align*}

%    We claim that, an element $\sum_{n,k}h_{n,k}u^n\cdot t_{zx}^{[k]}\in R\langle t_{zx}\rangle[u]$, where $h_{n,k}\in R$, lies in the image of $-D_x'\oplus D_z'$ if and only if, for $n$ such that $p|n+1$, we have $h_{n,k}\in (p,x)R$.

%    The ``only if'' direction is obvious. Now given an element $\sum_{n,k}h_{n,k}u^n\cdot t_{zx}^{[k]}\in R\langle t_{zx}\rangle[u]$, such that $h_{n,k}\in (p,x)R$ if $p|n+1$. We would determine a collection of elements $\{f_{n,k},g_{n,k}\}\subset R$ by induction on $n+k$ and $n$, such that 
%    $$
%    (-D_x'\oplus D_z')(\sum_{n,k}f_{n,k}u^n\cdot t_{zx}^{[k]},\sum_{n,k}g_{n,k}u^n\cdot t_{zx}^{[k]}))=\sum_{n,k}h_{n,k}u^n\cdot t_{zx}^{[k]}.
%    $$

%    The elements $f_{n,0},g_{0,0}$ can be arbitrary, and we just take $f_{n,0}=g_{0,0}=0$. We would like to find $f_{0,k+1},g_{1,k},g_{0,k+1}$, such that $-pf_{0,k+1}+g_{1,k}+x g_{0,k+1}=h_{0,k}$; we can just take $f_{0,k+1}=g_{0,k+1}=0$ and $g_{1,k}=h_{0,k}$. Now, suppose that for some $l\geq 0$ and some $0\leq n_0<l$ and $k_0\geq 0$ with $n_0+k_0=l$, we have found $f_{n,k+1},g_{n+1,k},g_{n,k+1}$ where $n+k=l$ and $n\leq n_0$, such that $-p f_{n,k+1}+(n+1)g_{n+1,k}+xg_{n,k+1}=h_{n,k}$; we then would like to find $f_{n_0+1,k_0},g_{n_0+2,k_0-1},g_{n_0+1,k_0}$ such that $-p f_{n_0+1,k_0}+(n_0+2)g_{n_0+2,k_0-1}+xg_{n_0+1,k_0}=h_{n_0 +1,k_0-1}$. 
\end{computation}

\begin{computation}[$\ker(D_x'dx\oplus D_z'dz)$]\label{computation:ker1}
    We would like to determine
    $$
    \ker(D_x'dx\oplus D_z'dz:R\langle t_{zx}\rangle[u]dz\oplus R\langle t_{zx}\rangle[u]dx\rightarrow R\langle t_{zx}\rangle[u](dz\wedge dx)),
    $$
    and since the map $D_x'dx\oplus D_z'dz$ is homogeneous, it suffices to consider for an integer $l\geq 0$ 
    $$
    \ker((D_x'dx\oplus D_z'dz)_l: (\oplus_{n+k=l}Ru^n\cdot t_{zx}^{[k]}dz)\oplus (\oplus_{n+k=l}R t_{zx}^{[k]}u^n dx)\rightarrow \oplus_{n+k=l-1}Rt_{zx}^{[k]}u^n (dz\wedge dx)).
    $$
    Note that the subscript of $(D_x'dx\oplus D_z'dz)_l$ concerns only $t_{zx}^{[k]}u^n $ and does not indicate the correct degree in the graded complex $E^\bullet$.
    
    Consider a pair $(\sum_{n+k=l}f_{n,k}t_{zx}^{[k]}u^n ,\sum_{n+k=l}g_{n,k}t_{zx}^{[k]}u^n )$, where $f_{n,k},g_{n,k}\in R=\Z_p[x]/(px)$. By Construction \ref{construction:resolution}, we have 
    \begin{align*}
   \ & (-D_x'\oplus D_z')(\sum_{n,k}f_{n,k}u^n\cdot t_{zx}^{[k]},\sum_{n,k}g_{n,k}t_{zx}^{[k]}u^n )\\
    & = -\sum_{n,k}f_{n,k}pt_{zx}^{[k-1]}u^n + \sum_{n,k}(ng_{n,k}t_{zx}^{[k]}u^{n-1} +xg_{n,k}t_{zx}^{[k-1]}u^n) \\
    & = \sum_{n,k}(-pf_{n,k+1}+(n+1)g_{n+1,k}+xg_{n,k+1})t_{zx}^{[k]}u^n ,
    \end{align*}
    and $(-D_x'\oplus D_z')(\sum_{n,k}f_{n,k}t_{zx}^{[k]}u^n ,\sum_{n,k}g_{n,k}t_{zx}^{[k]}u^n )=0$ if and only if the following condition is satisfied: 
    \begin{itemize}
        \item[($*$)] For $n,k\geq 0$, $-pf_{n,k+1}+(n+1)g_{n+1,k}+xg_{n,k+1}=0$.
    \end{itemize}

    We further make the following claim:
    \begin{claim}\label{claim:ker1}
        Suppose that $l=pr+s$ with $r\geq 0$ and $0\leq s\leq p-1$, then $(-D_x'\oplus D_z')_l(\sum_{n,k}f_{n,k}t_{zx}^{[k]}u^n ,\sum_{n,k}g_{n,k}t_{zx}^{[k]}u^n )=0$ if and only if the following conditions are satisfied:
        \begin{itemize}
             \item[(1)] For $1\leq r'\leq r$, 
            $$
            r'g_{pr',l-pr'}-f_{pr'-1,l-pr'+1}\in xR.
            $$
            
            \item[(2)] For $n\leq l$ which is not divisible by $p$, write $pi_n<n<p(i_n+1)$, where $0\leq i_n\leq t$, then 
            $$
            g_{n,l-n}=(\sum_{j=1}^{n-pi_n}\frac{(-x)^{j-1}p}{n(n-1)\cdots(n-j+1)}f_{n-j,l-n+j})+\frac{(-x)^{n-pi_n}}{n(n-1)\cdots(pi_n+1)}g_{pi_n,l-pi_n}.
            $$

            %\item[(3)] The elements $f_{l,0}$ and $g_{0,l}$ can be arbitrary.
        \end{itemize}
    \end{claim}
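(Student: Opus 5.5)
The plan is to solve the linear system ($*$) degree by degree in the $u$-exponent $n$, exactly as in Claims \ref{claim:H0} and \ref{claim:H2}. Fix $l=pr+s$ and write ($*$) at the index $(n,k)$ with $n+k=l-1$:
$$
(n+1)g_{n+1,l-n-1}=pf_{n,l-n}-xg_{n,l-n},\qquad 0\leq n\leq l-1 .
$$
The argument splits according to whether $p$ divides $n+1$. Throughout we use that $R=\Z_p[x]/(px)$ satisfies $pR\cap xR=0$, and that an element of $R$ killed by $p$ lies in $xR$.

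First suppose $p\nmid n+1$. Then $n+1$ is a unit in $R$, and ($*$) is equivalent to
$$
g_{n+1,l-n-1}=\frac{p}{n+1}f_{n,l-n}-\frac{x}{n+1}g_{n,l-n}.
$$
Now fix $n$ with $pi_n<n<p(i_n+1)$ and iterate this recursion downward from $n$ until we reach $g_{pi_n,l-pi_n}$. Every intermediate index $m$ with $pi_n<m\leq n$ has $p\nmid m$, so each step is legitimate. Expanding the products gives terms of the form $(-x)^{j-1}p\,f_{n-j,l-n+j}/(n\cdots(n-j+1))$. Whenever $j\geq 2$ such a term contains $xp=0$, so it vanishes, although keeping it does no harm. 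This yields exactly the formula in Claim \ref{claim:ker1} (2). Conversely, (2) reproduces all the equations of ($*$) for indices $n$ with $p\nmid n+1$.

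Next suppose $n+1=pr'$ with $1\leq r'\leq r$. Then ($*$) becomes
$$
pr'g_{pr',l-pr'}=pf_{pr'-1,l-pr'+1}-xg_{pr'-1,l-pr'+1},
$$
which we rewrite as $p\,(r'g_{pr',l-pr'}-f_{pr'-1,l-pr'+1})=-xg_{pr'-1,l-pr'+1}$. The left side lies in $pR$ and the right side lies in $xR$. Since $pR\cap xR=0$, both sides vanish. From the left side we get that $r'g-f$ is $p$-torsion, hence lies in $xR$; this is condition (1). From the right side we get $xg_{pr'-1,l-pr'+1}=0$.

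The main point to check is that this extra constraint $xg_{pr'-1,\ast}=0$ is automatic given (1) and (2), since otherwise the claim would omit a condition. To see it, substitute the closed formula (2) for $g_{pr'-1,\ast}$, taking $n=pr'-1$ and $i_n=r'-1$. Multiplying by $x$ kills every term carrying a factor of $p$. What survives is $x\cdot(-x)^{p-1}g_{p(r'-1),\ast}/((p-1)!)$ up to a unit, and this need not be zero in general. So I expect this to be the delicate step.

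To handle it, I would check whether ($*$) at the other indices already forces $g_{p(r'-1),\ast}\in pR$, or else whether the stated claim implicitly absorbs this constraint. If neither holds, I would record the condition $x^p g_{p(r'-1),l-p(r'-1)}=0$, equivalently $g_{p(r'-1),l-p(r'-1)}\in pR$ for $r'\geq 1$, as part of condition (1) or (2). This is consistent with the torsion $\F_p[x]/(x^{p+1})$-type pieces that appear later in the statement for $E^2_{-1}$.

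Sufficiency is then routine: given (1) and (2), the case analysis above shows that each equation of ($*$) holds.
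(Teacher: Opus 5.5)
Your reduction of ($*$) is the same as the paper's: you split according to whether $p\mid n+1$ and use $pR\cap xR=0$. The step you flag as delicate is in fact fatal to the claim as stated. The ``if'' direction is false, so no proof of it can be completed, and your closing sentence (sufficiency from (1) and (2) alone) is wrong.

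Nothing else in ($*$) can supply the missing constraint. At level $l$, the equations with $p\nmid n+1$ are exactly equivalent to (2). The equation with $n+1=pr'$ is exactly equivalent to (1) at $r'$ together with $xg_{pr'-1,l-pr'+1}=0$. By (2) and $px=0$, the latter reads $x^p g_{p(r'-1),l-p(r'-1)}=0$, i.e.\ $g_{p(r'-1),l-p(r'-1)}\in pR$.

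Here is a concrete counterexample. Take $l=p$, all $f_{n,k}=0$, $g_{p,0}=0$, $g_{0,p}=1$, and $g_{n,p-n}=(-x)^n/n!$ for $1\leq n\leq p-1$. Then (1) and (2) hold, but the sum telescopes to
\[
D'_z\Bigl(\sum_{n=0}^{p-1}\frac{(-x)^n}{n!}\,t_{zx}^{[p-n]}u^n\Bigr)=\frac{(-1)^{p-1}x^p}{(p-1)!}\,u^{p-1}\neq 0 \quad\text{in } R\langle t_{zx}\rangle[u],
\]
because $x^p\neq 0$ in $\Z_p[x]/(px)$ and $(p-1)!$ is a unit.

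So commit to the fix you propose. The correct statement is (1), (2), and (3): $g_{pi,l-pi}\in pR$ for $0\leq i\leq r-1$, equivalently $xg_{pi-1,l-pi+1}=0$ for $1\leq i\leq r$. With (3) added, your case analysis proves sufficiency verbatim.

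The paper's proof of necessity actually derives this constraint (``we must have $g_{n-1,k+1}\in pR$''). It then drops it from the statement and calls sufficiency routine.

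The omission propagates. The element above is the paper's $\gamma_{p,0}$. More generally, $\gamma_{l,i}$ violates ($*$) at $n=p(i+1)-1$ whenever $i<\lfloor l/p\rfloor$; only $p\gamma_{l,i}$ is a cycle, and among the $\gamma_{l,i}$ only $\gamma_{l,\lfloor l/p\rfloor}$ is. So the ensuing description of $\ker(D_x'dx\oplus D_z'dz)$, which lists $R\gamma_{l,i}$ for every $0\leq i\leq\lfloor l/p\rfloor$, must be revised. The same goes for the resulting quotient term $\oplus_{0\leq i\leq\lfloor l/p\rfloor}\F_p[x]\gamma_{l,i}$ in the description of $E^2_{-1,*}$.
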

    \begin{proof}[Proof of Claim \ref{claim:ker1}]
        The sufficiency of conditions in Claim \ref{claim:ker1} is routine. We check the necessity as follows.

        First we check the necessity of Claim \ref{claim:ker1} (1). By condition ($*$), when $p|n$ and $n\geq 1$, we have $xg_{n-1,k+1}=pf_{n-1,k+1}-ng_{n,k}\in pR$, and we must have $g_{n-1,k+1}\in pR$; therefore $xg_{n-1,k+1}=0$ and $ng_{n,k}-pf_{n-1,k+1}=0$; and in $R$, $ng_{n,k}-pf_{n-1,k+1}=0$ if and only if $\frac{n}{p}g_{n,k}-f_{n-1,k+1}\in xR$. 

        Then we check the necessity of Claim \ref{claim:ker1} (2). When $n$ is not divisible by $p$ and $n\geq 1$, by condition ($*$) we have
        $$
        g_{n,k}=\frac{p}{n}f_{n-1,k+1}-\frac{x}{n}g_{n-1,k+1},
        $$
        and it suffices to apply this equality recursively to the situation of Claim \ref{claim:ker1} (2).
    \end{proof}

    Now, for $0\leq l'< l$ such that $l'+1$ is not divisible by $p$, we set 
    $$
    \beta_{l,l'}=t_{zx}^{[l-l']}u^{l'} dz+\sum_{n=l'+1}^{\min\{p(\lfloor \frac{l'}{p}\rfloor+1)-1,l\}}\frac{(-x)^{n-l'}p}{n(n-1)\cdots(l'+1)}t_{zx}^{[l-n]}u^n dx;
    $$
    also, for $1\leq i\leq \lfloor\frac{l}{p}\rfloor$, we set 
    $$
    \gamma_{l,i}=it_{zx}^{[l-pi+1]}u^{pi-1} dz+\sum_{n=pi}^{\min\{p(i+1)-1,l\}}\frac{(-x)^{n-pi}}{n(n-1)\cdots(pi+1)}t_{zx}^{[l-n]}u^n dx,
    $$
    and finally 
    $$
    \gamma_{l,0}=\sum_{n=0}^{\min\{p-1,l\}}\frac{(-x)^{n-pi}}{n(n-1)\cdots(pi+1)}t_{zx}^{[l-n]}u^n dx.
    $$
    By Claim \ref{claim:ker1}, we have 
    \begin{align*}
    \ker(D_x'dx\oplus D_z'dz)\simeq (R[u]dz\oplus R dx)\bigoplus_{l\geq 1}((\oplus_{\text{$0\leq l'< l$, $p\nmid l'+1$}}R\beta_{l,l'})\\\oplus(\oplus_{1\leq i\leq \lfloor\frac{l}{p}\rfloor}R\gamma_{l,i}+Rxt_{zx}^{[l-pi+1]}u^{pi-1} dz)\oplus R\gamma_{l,0}).
    \end{align*}
\end{computation}

\begin{computation}[$\ima(D_z'dz,D_x'dx)$]\label{computation:im1}
     We would like to determine the image of 
    $$
    (D_z'dz,D_x'dx):R\langle t_{zx}\rangle[u]\rightarrow R\langle t_{zx}\rangle[u]dz\oplus R\langle t_{zx}\rangle[u]dx.
    $$
    Since this map is homogeneous, it suffices to consider for an integer $l\geq 0$ the map 
    $$
    (D_z'dz,D_x'dx)_{l+1}: \oplus_{n+k=l+1}Rt_{zx}^{[k]}u^{n} \rightarrow (\oplus_{n+k=l}Rt_{zx}^{[k]}u^{n} dz)\oplus(\oplus_{n+k=l}Rt_{zx}^{[k]}u^{n} dx).
    $$
    Note that the subscript of $(D_z'dz,D_x'dx)_{l+1}$ concerns only $t_{zx}^{[k]}u^n $ and does not indicate the correct degree in the graded complex $E^\bullet$.    
    
    Consider an element $\sum_{n,k}e_{n,k}t_{zx}^{[k]}u^n$, where $e_{n,k}\in R=\Z_p[x]/(px)$. By Construction \ref{construction:resolution}, we have 
    \begin{align*}
    (D'_z,D'_x )(\sum e_{n,k}t_{zx}^{[k]}u^n)
    & =(\sum_{n,k}(ne_{n,k}t_{zx}^{[k]}u^{n-1} +xe_{n,k}t_{zx}^{[k-1]}u^n ),\sum_{n,k}e_{n,k}pt_{zx}^{[k-1]}u^n )\\
    & =(\sum_{n,k}((n+1)e_{n+1,k}+xe_{n,k+1})t_{zx}^{[k]}u^n,\sum_{n,k}pe_{n,k+1}t_{zx}^{[k]}u^n).
    \end{align*}

    It is obvious that the image of $(D_z'dz,D_x'dx)_1$ is $Rdz\oplus pRdx$. For $l\geq 1$, let $\beta_{l,l'}$, $\gamma_{l,i}$, and $\gamma_{l,0}$ be determined as in Computation \ref{computation:ker1}, and we express an element $\alpha\in \ker(D_x'dx\oplus D_z'dz)_l$ as 
        $$
        \alpha=f_lu^ldz+(\sum_{\text{$0\leq l'<l$, $p\nmid l'+1$}}b_{l'}\beta_{l,l'
        })+(\sum_{i=1}^{\lfloor\frac{l}{p}\rfloor}g_{i}\gamma_{l,i}+g_i'xt_{zx}^{[l-pi+1]}u^{pi-1}dz)+g_0\gamma_{l,0},
        $$
        where $f_l,b_{l'},g_i\in R$, $g_i'\in \F_p[x]$. Then $\alpha$ lies in $\ima(D'_zdz,D'_xdx)_{l+1}$ if and only if the following conditions are satisfied:
        \begin{itemize}
            \item[($*$)] There exist some elements $e_{n,k}\in R$ with $n+k=l+1$, such that $f_l=(l+1)e_{l+1,0}+xe_{l,1}$, $g_0=pe_{0,l+1}$; and for $1\leq i\leq \lfloor\frac{l}{p}\rfloor$, $g_i=pe_{pi,l-pi+1}$, $g_i'=e_{pi-1,l-pi+2}$; and for $0\leq l'< l$ with $p\nmid l'+1$, $b_{l'}=(l'+1)e_{l'+1,l-l'}+xe_{l',l-l'+1}$.
        \end{itemize}

    We make the following claim: 
    \begin{claim}\label{claim:im1}
    \ 
    
        \begin{itemize}
            \item[(i)] The image of $(D_z'dz,D_x'dx)_1$ is $Rdz\oplus pRdx$.

            \item[(ii)] Suppose that $l\geq 1$ and let $\alpha\in \ker(D_x'dx\oplus D_z'dz)_l$ be expressed as above. Then $\alpha$ lies in $\ima(D_z'dz,D_x'dx)_{l+1}$ if and only if there exist elements $e_i\in R$, $0\leq i\leq \lfloor\frac{l}{p}\rfloor$ such that the following conditions are satisfied: 
            \begin{itemize}
            %\item[(1)] For $0\leq l'<l$ with $p\nmid l'+1$, the elements $b_{l'}\in R$ can be arbitrary.
        
                \item[(1)] For $1\leq i\leq \lfloor\frac{l}{p}\rfloor$,
                $$
                g_i'-(\sum_{j=1}^{p-1}\frac{(-x)^{j-1}}{(pi-1)(pi-2)\cdots(pi-j)}b_{pi-j-1})-\frac{(-x)^{p}}{(pi-1)\cdots(pi-p+1)}e_{i-1}\in (p,x^{p+1})R.
                $$

                \item[(2)] For $0\leq i\leq \lfloor\frac{l}{p}\rfloor$, $g_i=pe_i$.

                \item[(3)] If $p|l+1$, then 
                $$
                f_l+(\sum_{j=1}^{p-1}\frac{(-x)^{j}}{l(l-1)\cdots(l-j+1)}b_{l-j})-\frac{(-x)^{p}}{l(l-1)\cdots(l-p+2)}e_{\lfloor\frac{l}{p}\rfloor}\in (l+1,x^{p+1})R.
                $$
            \end{itemize}
        \end{itemize}

            %\item[(ii)] If $p|l+1$, then $\alpha$ lies in $\ima(D_z'dz,D_x'dx)_{l+1}$ if and only if there exist elements $e_i\in R$, $0\leq i\leq \frac{l+1}{p}$ such that the following conditions are satisfied:
            %\begin{itemize}
            %\item[(1)] For $0\leq l'<l$ with $p\nmid l'+1$, the elements $b_{l'}\in R$ can be arbitrary.
        
            %    \item[(1)] For $1\leq i<\frac{l+1}{p}$,
            %    $$
            %    g_i'-(\sum_{j=1}^{p-1}\frac{(-x)^{j-1}}{(pi-1)(pi-2)\cdots(pi-j)}b_{pi-j-1})-\frac{(-x)^{p}}{(pi-1)\cdots(pi-p+1)}e_{i-1}=0.
            %    $$

            %    \item[(2)] For $0\leq i<\frac{l+1}{p}$, $g_i=pe_i$.

            %    \item[(3)] $f_l-(l+1)e_{\frac{l+1}{p}}+(\sum_{j=1}^{p-1}\frac{(-x)^{j}}{l(l-1)\cdots(l-j+1)}b_{l-j})-\frac{(-x)^{p}}{l(l-1)\cdots(l-p+2)}e_{\frac{l+1}{p}-1}=0$.
            %\end{itemize}   
    \end{claim}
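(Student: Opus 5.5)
Part (i) is the case $l=0$: the source of $(D_z'dz,D_x'dx)_1$ is $Ru\oplus Rt_{zx}$. By the formula above, $u\mapsto dz$ and $t_{zx}\mapsto x\,dz+p\,dx$. The image is therefore $R\,dz+R(x\,dz+p\,dx)=Rdz\oplus pRdx$, since we may subtract $x\,dz$.

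For (ii), I will work directly with condition $(*)$ and eliminate the unknowns $e_{n,k}$ ($n+k=l+1$) one residue block at a time. First I record what each equation of $(*)$ forces.
\begin{itemize}
\item The equations $g_i=pe_{pi,l-pi+1}$ only see $e_{pi,l-pi+1}$ modulo $x$, since $px=0$. Setting $e_i:=e_{pi,l-pi+1}$ gives condition (2).
\item The equations $g_i'=e_{pi-1,l-pi+2}$ determine the $e$ just below each multiple of $p$, up to its component in $\F_p[x]$.
\item For $p\nmid l'+1$, the equations $b_{l'}=(l'+1)e_{l'+1,l-l'}+xe_{l',l-l'+1}$ can be solved recursively for $e_{l'+1,l-l'}$, since $l'+1$ is a unit in $R$:
$$e_{l'+1,l-l'}=\tfrac{1}{l'+1}\bigl(b_{l'}-xe_{l',l-l'+1}\bigr).$$
\end{itemize}

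Inside the block $pi-p\le n\le pi-1$, I start from $e_{p(i-1),\ast}=e_{i-1}$ and iterate this recursion upward to $n=pi-1$. The result expresses $e_{pi-1,\ast}$ as an alternating sum of the $b_{pi-j-1}$ weighted by $(-x)^{j-1}/((pi-1)\cdots(pi-j))$, plus $(-x)^{p-1}e_{i-1}/((pi-1)\cdots(pi-p+1))$. Comparing with $g_i'=e_{pi-1,\ast}$ gives the relation in (1).

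The ambiguities are the $p$-divisible parts of the $e$'s and the fact that $g_i'$ was only defined modulo the kernel of multiplication by $x$. Keeping track of them shows that the relation holds only modulo $(p,x^{p+1})$. The extra power of $x$ on the $e_{i-1}$ term comes from the generator $xt_{zx}^{[l-pi+1]}u^{pi-1}dz$ carrying the additional factor $x$. Conversely, given $e_i$ satisfying (1) and (2), I will reverse the recursion to produce all $e_{n,k}$. Condition (1) modulo $(p,x^{p+1})$ is exactly what is needed to adjust $e_{pi-1,\ast}$ by elements of $pR$ and of $x^{p}\F_p[x]$ (absorbed into the $b$ and $g'$ coordinates) so that every equation holds on the nose.

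The top block is handled by the equation $f_l=(l+1)e_{l+1,0}+xe_{l,1}$, running the same recursion from $e_{\lfloor l/p\rfloor}$ up to $e_{l,1}$. If $p\nmid l+1$, then $e_{l+1,0}$ is free and absorbs $f_l$, so there is no condition. If $p\mid l+1$, then $(l+1)e_{l+1,0}$ ranges over $(l+1)R$, and eliminating $e_{l,1}$ via the recursion yields (3) modulo $(l+1,x^{p+1})$.

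The main obstacle is the bookkeeping of the torsion in $R=\Z_p[x]/(px)$. I must ensure that the basis description of $\ker(D_x'dx\oplus D_z'dz)_l$ from Computation \ref{computation:ker1} is a genuine decomposition, so that the coordinates $f_l,b_{l'},g_i,g_i'$ of $\alpha$ are well defined. Then I must check that the freedom in each $e_{n,k}$, namely its $p$-part and its $x$-annihilated part, matches exactly the ideals $(p,x^{p+1})$ and $(l+1,x^{p+1})$ appearing in (1) and (3). I would verify this by splitting $R=\Z_p\oplus x\F_p[x]$ and treating the two summands separately at each step of the recursion.
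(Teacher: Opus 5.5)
Your strategy is the same as the paper's: solve $(*)$ block by block from $e_i:=e_{pi,l-pi+1}$, using $e_{n+1,\ast}=\frac{1}{n+1}(b_n-xe_{n,\ast})$ for $p\nmid n+1$. Your recursion output $e_{pi-1,\ast}=\sum_{j=1}^{p-1}\frac{(-x)^{j-1}}{(pi-1)\cdots(pi-j)}b_{pi-j-1}+\frac{(-x)^{p-1}}{(pi-1)\cdots(pi-p+1)}e_{p(i-1),\ast}$ is also correct. The gap is the step that turns this into condition (1).

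Saying that the factor $x$ in $xt_{zx}^{[l-pi+1]}u^{pi-1}dz$ upgrades $(-x)^{p-1}$ to $(-x)^p$ and the modulus to $x^{p+1}$ is double counting. That factor was already cancelled when you passed from $xg_i'=xe_{pi-1,\ast}$ to $g_i'\equiv e_{pi-1,\ast}\bmod pR$. Moreover, the $x$-component of $e_{p(i-1),\ast}$ is completely free, because $px=0$ makes it invisible to $g_{i-1}=pe_{p(i-1),\ast}$ and to the $dz$-equation at $n=p(i-1)-1$. It therefore shifts $e_{pi-1,\ast}$ by an arbitrary element of $x^p\F_p[x]$. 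What your recursion actually proves is
\[
g_i'-\sum_{j=1}^{p-1}\frac{(-x)^{j-1}}{(pi-1)\cdots(pi-j)}\,b_{pi-j-1}-\frac{(-x)^{p-1}}{(pi-1)\cdots(pi-p+1)}\,e_{i-1}\in(p,x^p)R .
\]
This is not equivalent to the printed (1), and the printed (1) is in fact false.

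Take $p=2$, $l=2$ and $\alpha=x^3t_{zx}u\,dz$, so $g_1'=x^2$ and all other coordinates vanish. Then $\alpha=(D_z'dz,D_x'dx)(xt_{zx}^{[3]}+x^2t_{zx}^{[2]}u)$:
\begin{itemize}
\item $D_z'(xt_{zx}^{[3]})=x^2t_{zx}^{[2]}$;
\item $D_z'(x^2t_{zx}^{[2]}u)=x^2t_{zx}^{[2]}+x^3t_{zx}u$;
\item $2x^2=0$, and $D_x'$ kills both terms.
\end{itemize}
But (2) forces $e_0\in xR$, so (1) would require $x^2\in(2,x^3)R$, which fails. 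So no bookkeeping can finish your argument for the statement as written. The paper's one-line justification has the same shape and does not address this.

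The same problem affects (3). Eliminating $e_{l,1}$ from $f_l=(l+1)e_{l+1,0}+xe_{l,1}$ with your recursion gives
\[
f_l+\sum_{j=1}^{p-1}\frac{(-x)^j}{l\cdots(l-j+1)}b_{l-j}+\frac{(-x)^p}{l\cdots(l-p+2)}e_{\lfloor l/p\rfloor}\in(l+1,x^{p+1})R,
\]
with a plus sign on the last term. Here the modulus $x^{p+1}$ is right, since nothing is divided by $x$. For odd $p$ the sign matters. For $p=3$, $l=2$, one has $(D_z'dz,D_x'dx)(t_{zx}^{[3]}-xt_{zx}^{[2]}u+\frac{x^2}{2}t_{zx}u^2)=\frac{x^3}{2}u^2dz+3t_{zx}^{[2]}dx$. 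This element has $f_2=\frac{x^3}{2}$, $g_0=3$ (forcing $e_0\equiv 1$ modulo $xR$) and $b_0=b_1=0$, and the printed (3) would require $x^3\in(3,x^4)R$.

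The kernel check you deferred also fails as printed, in two ways:
\begin{itemize}
\item The $dx$-part of $\beta_{l,l'}$ should be $\frac{p}{l'+1}t_{zx}^{[l-l'-1]}u^{l'+1}dx$; the printed exponent $n-l'$ makes it zero, so the printed $\beta_{l,l'}$ is not in the kernel.
\item $\gamma_{l,i}\notin\ker$ whenever $p(i+1)\le l$, since then the equation at $n=p(i+1)-1$ leaves $x\cdot\frac{(-x)^{p-1}}{\cdots}\neq 0$.
\end{itemize}
The second point is harmless for the image criterion only because (2) forces $g_i\in pR$. Your method is sound, but carried out correctly it proves the corrected conditions above, not the stated ones.
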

    \begin{proof}[Proof of Claim \ref{claim:im1}]
         Indeed, with condition ($*$) it is routine to check Claim \ref{claim:im1}: it suffices to note that, in condition ($*$), any $e_{n,k}$ with $p\nmid n$ can be solved once $b_{l'}$'s and $e_{pi,l-pi+1}$'s are given, and the elements $e_{pi,l-pi+1}$'s are the elements $e_i$'s in Claim \ref{claim:im1} up to additions by $xR$.
    \end{proof}
\end{computation}

\begin{computation}[$H^1(E^\bullet)$]\label{computation:H1}
    We would like to determine 
    $$
    H^1(E^\bullet)=\ker(D_x'dx\oplus D_z'dz)/\ima(D_z'dz,D_x'dx).
    $$
    By homogeneity, for an integer $l\geq 0$, we consider 
    $$
    \ker(D_x'dx\oplus D_z'dz)_l/\ima(D_z'dz,D_x'dx)_{l+1},
    $$
    where $(D_x'dx\oplus D_z'dz)_l$ and $(D_z'dz,D_x'dx)_{l+1}$ are taken as in Computations \ref{computation:ker1} and \ref{computation:im1}, respectively.

    It is obvious that $\ker(D_x'dx\oplus D_z'dz)_0/\ima(D_z'dz,D_x'dx)_{1}\simeq (Rdz\oplus Rdx)/(Rdz\oplus pRdx)\simeq (R/p)dx=\F_p[x]dx$. 

    Now suppose that $l\geq 1$. Let $\beta_{l,l'}$, $\gamma_{l,i}$, and $\gamma_{l,0}$ be determined as in Computation \ref{computation:ker1}, and we have 
    \begin{align*}
        \ker(D_x'dx\oplus D_z'dz)_l=Ru^ldz\oplus(\oplus_{\text{$0\leq l'< l$, $p\nmid l'+1$}}R\beta_{l,l'})\\\oplus(\oplus_{1\leq i\leq \lfloor\frac{l}{p}\rfloor}R\gamma_{l,i}+Rxt_{zx}^{[l-pi+1]}u^{pi-1} dz)\oplus R\gamma_{l,0},
    \end{align*}
    and we express an element $\alpha\in \ker(D_x'dx\oplus D_z'dz)_l$ as 
        $$
        \alpha=f_lu^ldz+(\sum_{\text{$0\leq l'<l$, $p\nmid l'+1$}}b_{l'}\beta_{l,l'
        })+(\sum_{i=1}^{\lfloor\frac{l}{p}\rfloor}g_{i}\gamma_{l,i}+xg_i't_{zx}^{[l-pi+1]}u^{pi-1} dz)+g_0\gamma_{l,0},
        $$
        where $f_l,b_{l'},g_i\in R$ and $g_i'\in \F_p[x]$.

    Also we consider the following $R$-submodule of $\ker(D'_xdx\oplus D_z'dz)_l$:
    \begin{align*}
         \widetilde{\ker}(D'_xdx\oplus D_z'dz)_l:=Ru^ldz\oplus(\oplus_{\text{$0\leq l'< l$, $p\nmid l'+1$}}R\beta_{l,l'})\\\oplus(\oplus_{1\leq i\leq \lfloor\frac{l}{p}\rfloor}Rp\gamma_{l,i}+Rxt_{zx}^{[l-pi+1]}u^{pi-1} dz)\oplus Rp\gamma_{l,0},
    \end{align*}
    and we express an element $\tilalpha\in \widetilde{\ker}(D_x'dx\oplus D_z'dz)_l$ as 
        $$
        \tilalpha=\tilf_lu^ldz+(\sum_{\text{$0\leq l'<l$, $p\nmid l'+1$}}\tilb_{l'}\beta_{l,l'
        })+(\sum_{i=1}^{\lfloor\frac{l}{p}\rfloor}\tilg_{i}p\gamma_{l,i}+x\tilg_i't_{zx}^{[l-pi+1]}u^{pi-1} dz)+\tilg_0p\gamma_{l,0},
        $$
        where $\tilf_l,\tilb_{l'}\in R$, $\tilg_i\in \Z_p$ and $\tilg_i'\in \F_p[x]$. It is obvious that 
    \begin{align*}
        \ker(D'_xdx\oplus D_z'dz)_l/\widetilde{\ker}(D'_xdx\oplus D_z'dz)_l\simeq \oplus_{0\leq i\leq \lfloor\frac{l}{p}\rfloor}(R/p)\gamma_{l,i} \\
        \simeq \oplus_{0\leq i\leq \lfloor\frac{l}{p}\rfloor}\F_p[x]\gamma_{l,i}.
    \end{align*}

    Now we make the following claim:
    \begin{claim}\label{claim:H1}
        Suppose that $l\geq 1$ and let $\widetilde{\ker}(D_x'dx\oplus D_z'dz)_l$ be expressed as above. Then the following statements hold.
        \begin{itemize}
            \item[(1)] We have $\ima(D_z'dz,D_x'dx)_{l+1}\subset \widetilde{\ker}(D_x'dx\oplus D_z'dz)_l$. 

            \item[(2)] Suppose that $p\nmid l+1$, and consider the $R$-linear map 
            $$
            \widetilde{\ker}(D_x'dx\oplus D_z'dz)_l\rightarrow \oplus_{1\leq i\leq \lfloor\frac{l}{p}\rfloor}(\F_p[x]/(x^{p+1}))xt_{zx}^{[l-pi+1]}u^{pi-1} dz,
            $$
            which sends an element $\tilalpha=\tilf_lu^ldz+(\sum_{\text{$0\leq l'<l$, $p\nmid l'+1$}}\tilb_{l'}\beta_{l,l'
            })+(\sum_{i=1}^{\lfloor\frac{l}{p}\rfloor}\tilg_{i}p\gamma_{l,i}+x\tilg_i't_{zx}^{[l-pi+1]}u^{pi-1} dz)+\tilg_0p\gamma_{l,0}$ to $\sum_{i=1}^{\lfloor\frac{l}{p}\rfloor}(\tilg_i'-(\sum_{j=1}^{p-1}\frac{(-x)^{j-1}}{(pi-1)(pi-2)\cdots(pi-j)}\tilb_{pi-j-1})-\frac{(-x)^{p}}{(pi-1)\cdots(pi-p+1)}\tilg_{i-1})xt_{zx}^{[l-pi+1]}u^{pi-1} dz$. Then this map induces an isomorphism 
            $$
            \widetilde{\ker}(D_x'dx\oplus D_z'dz)_l/\ima(D_z'dz,D_x'dx)_{l+1}\simeq \oplus_{1\leq i\leq \lfloor\frac{l}{p}\rfloor}(\F_p[x]/(x^{p+1}))xt_{zx}^{[l-pi+1]}u^{pi-1} dz.
            $$

            \item[(3)] Suppose that $p|l+1$, and consider the $R$-linear map 
            $$
            \widetilde{\ker}(D_x'dx\oplus D_z'dz)_l\rightarrow (R/(l+1,x^{p+1}))u^ldz\oplus(\oplus_{1\leq i\leq \lfloor\frac{l}{p}\rfloor}(\F_p[x]/(x^{p+1}))xt_{zx}^{[l-pi+1]}u^{pi-1} dz),
            $$
            which sends an element $\tilalpha=\tilf_lu^ldz+(\sum_{\text{$0\leq l'<l$, $p\nmid l'+1$}}\tilb_{l'}\beta_{l,l'
            })+(\sum_{i=1}^{\lfloor\frac{l}{p}\rfloor}\tilg_{i}p\gamma_{l,i}+x\tilg_i't_{zx}^{[l-pi+1]}dzu^{pi-1} )+\tilg_0p\gamma_{l,0}$ to 
            \begin{align*}
            &(\tilf_l+(\sum_{j=1}^{p-1}\frac{(-x)^{j}}{l(l-1)\cdots(l-j+1)}\tilb_{l-j})-\frac{(-x)^{p}}{l(l-1)\cdots(l-p+2)}\tilg_{\lfloor\frac{l}{p}\rfloor})u^ldz+\\
            &\sum_{i=1}^{\lfloor\frac{l}{p}\rfloor}(\tilg_i'-(\sum_{j=1}^{p-1}\frac{(-x)^{j-1}}{(pi-1)(pi-2)\cdots(pi-j)}\tilb_{pi-j-1})-\frac{(-x)^{p}}{(pi-1)\cdots(pi-p+1)}\tilg_{i-1})xt_{zx}^{[l-pi+1]}u^{pi-1} dz.
            \end{align*}
            Then this map induces an isomorphism 
            \begin{align*}
                \ &\widetilde{\ker}(D_x'dx\oplus D_z'dz)_l/\ima(D_z'dz,D_x'dx)_{l+1}\\
                & \simeq (R/(l+1,x^{p+1}))u^ldz\oplus(\oplus_{1\leq i\leq \lfloor\frac{l}{p}\rfloor}(\F_p[x]/(x^{p+1}))xt_{zx}^{[l-pi+1]}u^{pi-1} dz).
            \end{align*}
        \end{itemize}
        Indeed, Claim \ref{claim:H1} follows immediately from Claim \ref{claim:im1} in Computation \ref{computation:im1}. 
    \end{claim}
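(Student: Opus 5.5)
The plan is to reduce Claim \ref{claim:H1} entirely to Claim \ref{claim:im1}, in three steps.

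\textbf{Step 1: part (1).} Let $\alpha\in\ima(D_z'dz,D_x'dx)_{l+1}$, say $\alpha$ is the image of $\sum_{n+k=l+1}e_{n,k}t_{zx}^{[k]}u^n$. Condition ($*$) of Computation \ref{computation:im1} gives $g_i=pe_{pi,l-pi+1}$ for $0\leq i\leq\lfloor l/p\rfloor$. Since $R$ is $\Z_p$-flat away from $x$-torsion and $pR\cong\Z_p\cdot p$, we can write $g_i=p\tilg_i$ with $\tilg_i\in\Z_p$. The components $f_l,b_{l'},g_i'$ are unconstrained in the definition of $\widetilde{\ker}$. Hence $\alpha\in\widetilde{\ker}(D_x'dx\oplus D_z'dz)_l$.

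\textbf{Step 2: the quotient maps are well defined and surjective.} For parts (2) and (3) I would check directly that the displayed map is well defined on the stated targets. Each coordinate is read off from the coefficients $\tilf_l,\tilb_{l'},\tilg_i,\tilg_i'$ with respect to the given $R$-basis of $\widetilde{\ker}$. The only subtlety is $\tilg_i'\in\F_p[x]$ multiplied by $x$: multiplication by $x$ kills $p$, so $(p,x^{p+1})R$ corresponds to $\F_p[x]/(x^{p+1})$ in the $xt_{zx}^{[l-pi+1]}u^{pi-1}dz$ slot. Surjectivity is clear: set $\tilg_i'$ (respectively $\tilf_l$) to the desired value and all other coefficients to $0$.

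\textbf{Step 3: the kernel equals the image.} Here we compare with Claim \ref{claim:im1}(ii), taking $g_i=p\tilg_i$, so that condition (2) there holds with $e_i=\tilg_i$.
\begin{itemize}
\item[(a)] \emph{Uniqueness of the $e_i$.} The $e_i$ are unique modulo the annihilator of $p$, namely $xR$. Substituting $e_i=\tilg_i+x(\cdots)$ into conditions (1) and (3) changes them only by elements of $x^{p+1}R$, so these conditions become exactly the vanishing of the coordinates of our map.
\item[(b)] \emph{Case $p\nmid l+1$.} Condition (3) of Claim \ref{claim:im1}(ii) is absent, and there is no $u^ldz$ coordinate. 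The term $f_l$ can always be hit, since $(l+1)e_{l+1,0}$ with $l+1$ a unit. This gives part (2).
\item[(c)] \emph{Case $p\mid l+1$.} The extra condition (3) produces the summand $(R/(l+1,x^{p+1}))u^ldz$. This gives part (3).
\end{itemize}

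\textbf{Main obstacle.} The delicate point is the bookkeeping in Step 3(a). One must ensure that the ambiguity of $e_i$ modulo $xR$, together with the $p$ versus $x$ interplay in $R=\Z_p[x]/(px)$, neither enlarges nor shrinks the image. Concretely, this requires $(-x)^p\cdot xR\subset x^{p+1}R$, and that the ideals $(p,x^{p+1})$ and $(l+1,x^{p+1})$ are exactly what the coefficient $x$ in $xt_{zx}^{[\cdot]}$ can absorb. I would verify this by writing elements of $R$ uniquely as $a+xh(x)$ with $a\in\Z_p$ and $h\in\F_p[x]$.
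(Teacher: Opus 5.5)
Your strategy is the paper's own: Claim~\ref{claim:H1} is simply asserted there to follow from Claim~\ref{claim:im1}, and your Steps 1--3 spell out that reduction. Step 1, which gives (1), is fine.

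The gap is in Step 3(a), which is exactly the bookkeeping you flagged but did not carry out. You take the coefficient $(-x)^p$ of $e_{i-1}$ in Claim~\ref{claim:im1} at face value, but that coefficient is miscomputed. As a result, the maps displayed in (2) and (3) do not even vanish on $\ima(D_z'dz,D_x'dx)_{l+1}$.

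To see this, write $e_n:=e_{n,l+1-n}$. Condition $(*)$ of Computation~\ref{computation:im1} gives $b_{l'}=(l'+1)e_{l'+1}+xe_{l'}$ for $p(i-1)\le l'\le pi-2$. It only forces $x\tilg_i'=xe_{pi-1}$, i.e.\ $\tilg_i'\equiv e_{pi-1}$ modulo $p$. Solving upward from $e_{p(i-1)}$ gives
\[
e_{pi-1}=\sum_{j=1}^{p-1}\frac{(-x)^{j-1}}{(pi-1)\cdots(pi-j)}b_{pi-j-1}+\frac{(-x)^{p-1}}{(pi-1)\cdots(pi-p+1)}e_{p(i-1)}.
\]
So the last term carries $(-x)^{p-1}$, not $(-x)^p$. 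Consequently the $xR=x\F_p[x]$-ambiguity of $e_{i-1}$ moves $\tilg_i'$ by $x^p\F_p[x]$, not by $x^{p+1}\F_p[x]$, and the $\tilg_{i-1}$-term enters with the wrong power of $x$.

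A concrete counterexample: take $p=2$, $l=2$. The image of $t_{zx}^{[3]}$ is $xt_{zx}^{[2]}dz+2t_{zx}^{[2]}dx=x\beta_{2,0}+p\gamma_{2,0}$. Its coordinates are $\tilb_0=x$, $\tilg_0=1$, $\tilg_1'=0$. The map in (2) sends it to $(-x-x^2)\,xt_{zx}^{[1]}u\,dz\neq 0$.

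The same slip affects (3). From $\tilf_l=(l+1)e_{l+1}+xe_l$, the $\tilg_{\lfloor l/p\rfloor}$-contribution is $x\cdot\frac{(-x)^{p-1}}{l\cdots(l-p+2)}\tilg_{\lfloor l/p\rfloor}=-\frac{(-x)^{p}}{l\cdots(l-p+2)}\tilg_{\lfloor l/p\rfloor}$. So that term must appear in the $u^ldz$-coordinate with a plus sign. For $p=3$, $l=2$, the displayed map sends the image of $t_{zx}^{[3]}$ to $x^3u^2dz\neq0$ in $R/(3,x^4)$.

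So your argument, like the paper's, proves (1) but not (2) and (3) as stated. If you derive the conditions from $(*)$ directly rather than quoting Claim~\ref{claim:im1}, your reduction proves the corrected statement:
\begin{enumerate}
\item[(a)] The $i$-th coordinate is $\tilg_i'-\sum_{j=1}^{p-1}\frac{(-x)^{j-1}}{(pi-1)\cdots(pi-j)}\tilb_{pi-j-1}-\frac{(-x)^{p-1}}{(pi-1)\cdots(pi-p+1)}\tilg_{i-1}$. It is well defined only in $\F_p[x]/(x^p)$, or equivalently as $x$ times it in $x\F_p[x]/(x^{p+1})$.
\item[(b)] Under your reading, with coefficients in $\F_p[x]/(x^{p+1})$, the summand is too big. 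For example, for $p=2$, $l=2$, the element $x^3t_{zx}^{[1]}u\,dz$ is the image of $-xt_{zx}^{[3]}+x^2t_{zx}^{[2]}u$.
\item[(c)] The $u^ldz$-coordinate in (3) is $\tilf_l+\sum_{j=1}^{p-1}\frac{(-x)^j}{l\cdots(l-j+1)}\tilb_{l-j}+\frac{(-x)^p}{l\cdots(l-p+2)}\tilg_{\lfloor l/p\rfloor}$ in $R/(l+1,x^{p+1})$. Here $x^{p+1}$ is correct, because the ambiguity enters as $x\cdot(-x)^{p-1}\cdot xR$.
\end{enumerate}
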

\end{computation}

We summarize the previous computations as follows. 

\begin{thm}\label{thm:computation}
    Let $R=\Z_p[x]/(px)$. Recall from Notation \ref{notation:Hopf-algebroid} the (graded) Hopf algebroid
    $$
    (A,\Gamma)=(\pi_*\THH(R/\Sphere_p[z,x]),\pi_*\THH(R/\Sphere_p[z_0,x_0,z_1,x_1])).
    $$ 
    Also recall the descent spectral sequence in Construction \ref{construction:descent-spectral-sequence}
    $$
    E^1_{i,j}=\pi_j \THH(R/\Sphere_p[z,x]^{\otimes_{\Sphere_p}(-i+1)})^\wedge_p\Rightarrow \pi_{i+j}\THH(R)^\wedge_p,
    $$
    and by Lemma \ref{lem:Hopf-algebroid} (4), we have 
    $$
    E^2_{i,j}\simeq \Ext^{-i,j}_{\Gamma}(A,A). 
    $$
    Moreover, by Proposition \ref{prop:resolution} we know that $E^2_{i,j}$'s are computed by the complex
    $$
    R\langle t_{zx}\rangle[u]\xrightarrow{(D'_zdz,D'_x dx)} R\langle t_{zx}\rangle[u]dz\oplus R\langle t_{zx}\rangle[u]dx\xrightarrow{D'_xdx\oplus D'_zdz} R\langle t_{zx}\rangle[u](dz\wedge dx),
    $$
    where $|u|=2$, $t_{zx}^{[k]}=2k$, and $|dz|=|dx|=2$, $|dz\wedge dx|=4$, and $D'_z: R\langle t_{zx}\rangle[u]\rightarrow  R\langle t_{zx}\rangle[u]$ is the $R$-linear map sending $t_{zx}^{[k]}u^n $ to $nt_{zx}^{[k]}u^{n-1} +xt_{zx}^{[k-1]}u^n $, and $D'_x: R\langle t_{zx}\rangle[u]\rightarrow  R\langle t_{zx}\rangle[u]$ is the $R[u]$-linear map sending $t_{zx}^{[k]}$ to $pt_{zx}^{[k-1]}$. By Corollary \ref{cor:degenerate}, we know that $E^2_{i,j}=0$ if $i\neq 0,-1,-2$ or $j$ is odd, and therefore this spectral sequence collapses at the $E^2$-term. 
    
    Now, for $l\geq 1$, we set 
    \begin{align*}
        \epsilon_{l} = \sum_{n+k=l,p\lfloor\frac{l}{p}\rfloor\leq n\leq l}(-1)^{n-p\lfloor\frac{l}{p}\rfloor}\frac{x^{n-p\lfloor\frac{l}{p}\rfloor}}{n(n-1)\cdots(p\lfloor\frac{l}{p}\rfloor+1)} t_{zx}^{[k]}u^n ;
    \end{align*}
    and for $0\leq l'< l$ such that $l'+1$ is not divisible by $p$, we set 
    $$
    \beta_{l,l'}=t_{zx}^{[l-l']}u^{l'} dz+\sum_{n=l'+1}^{\min\{p(\lfloor \frac{l'}{p}\rfloor+1)-1,l\}}\frac{(-x)^{n-l'}p}{n(n-1)\cdots(l'+1)}t_{zx}^{[l-n]}u^n dx;
    $$
    also, for $1\leq i\leq \lfloor\frac{l}{p}\rfloor$, we set 
    $$
    \gamma_{l,i}=it_{zx}^{[l-pi+1]}u^{pi-1} dz+\sum_{n=pi}^{\min\{p(i+1)-1,l\}}\frac{(-x)^{n-pi}}{n(n-1)\cdots(pi+1)}t_{zx}^{[l-n]}u^n dx,
    $$
    and finally,
    $$
    \gamma_{l,0}=\sum_{n=0}^{\min\{p-1,l\}}\frac{(-x)^{n-pi}}{n(n-1)\cdots(pi+1)}t_{zx}^{[l-n]}u^n dx.
    $$
    Then the following statements hold.
    \begin{itemize}
        \item[(1)] We have $E^2_{0,0}\simeq R$. Let $l\geq 1$, we have 
        $$
        E^2_{0,2l}\simeq \F_p[x]x\epsilon_l.
        $$

        \item[(2)] We have $E^2_{-1,0}\simeq 0$, $E^2_{-1,2}\simeq \F_p[x]dx$. Let $l\geq 1$ be an integer, 
        \begin{itemize}
            \item[(i)] If $p\nmid l+1$, then $E^2_{-1,2l+2}$ fits into a non-split exact sequence 
            $$
            0\rightarrow \oplus_{1\leq i\leq \lfloor\frac{l}{p}\rfloor}(\F_p[x]/(x^{p+1}))xt_{zx}^{[l-pi+1]}u^{pi-1} dz\rightarrow E^2_{-1,2l}\rightarrow  \oplus_{0\leq i\leq \lfloor\frac{l}{p}\rfloor}\F_p[x]\gamma_{l,i}\rightarrow 0.
            $$

            \item[(ii)] If $p|l+1$, then $E^2_{-1,2l+2}$ fits into a non-split exact sequence 
            $$
            0\rightarrow (R/(l+1,x^{p+1}))u^ldz\oplus(\oplus_{1\leq i\leq \lfloor\frac{l}{p}\rfloor}(\F_p[x]/(x^{p+1}))xt_{zx}^{[l-pi+1]}u^{pi-1} dz)\rightarrow E^2_{-1,2l}\rightarrow \oplus_{0\leq i\leq \lfloor\frac{l}{p}\rfloor}\F_p[x]\gamma_{l,i}\rightarrow 0.
            $$
        \end{itemize}

        \item[(3)] We have $E^2_{-2,0}=E^2_{-2,2}=E^2_{-2,4}=0$. Let $l\geq 1$ be an integer, then 
        $$
        E^2_{-2,2l+4}\simeq \oplus_{1\leq i\leq \lfloor\frac{l+1}{p}\rfloor}(\F_p[x]/(x^p))t_{zx}^{l-pi+1}u^{pi-1} (dz\wedge dx).
        $$
    \end{itemize}
\end{thm}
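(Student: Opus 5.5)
The theorem assembles Computations \ref{computation:H0}, \ref{computation:H2}, \ref{computation:ker1}, \ref{computation:im1} and \ref{computation:H1}, so the proof is mostly bookkeeping of degrees.

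\textbf{Identifying $E^2$.} By Lemma \ref{lem:Hopf-algebroid} (4), $E^2_{i,j}\simeq \Ext^{-i,j}_\Gamma(A,A)$. By Proposition \ref{prop:resolution}, this is the cohomology of the three-term complex $E^\bullet$. The collapse and the vanishing outside $i\in\{0,-1,-2\}$ and even $j$ are Corollary \ref{cor:degenerate}.

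\textbf{Matching degrees.} The one genuine point is to translate the word-length index $l$ (the sum $n+k$ for $t_{zx}^{[k]}u^n$) into internal degree $j$. The monomial $t_{zx}^{[k]}u^n$ has degree $2l$. A class in $A\,dz$ or $A\,dx$ of word length $l$ has degree $2l+2$, and a class in $A(dz\wedge dx)$ of word length $l$ has degree $2l+4$. Hence:
\begin{itemize}
\item[(1)] $E^2_{0,2l}$ is the word-length-$l$ part of $H^0$, which is $R$ for $l=0$ and $\F_p[x]x\epsilon_l$ for $l\geq 1$ (Computation \ref{computation:H0}).
\item[(2)] $E^2_{-1,2l+2}$ is $\ker_l/\ima_{l+1}$.
\item[(3)] $E^2_{-2,2l+4}$ is the word-length-$l$ part of $\coker$ (Computation \ref{computation:H2}), reindexed.
\end{itemize}
The small cases are checked directly.
\begin{itemize}
\item $E^2_{-1,0}=0$, since nothing of $E^1$ lies in degree $0$.
\item $E^2_{-1,2}=(Rdz\oplus Rdx)/(Rdz\oplus pRdx)=\F_p[x]dx$, which is the $l=0$ case of Computation \ref{computation:H1}.
\item $E^2_{-2,0}=E^2_{-2,2}=0$ for degree reasons. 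In the $l=0$ case of Claim \ref{claim:H2}, $r=\lfloor 1/p\rfloor=0$, so the cokernel vanishes and $E^2_{-2,4}=0$.
\end{itemize}

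\textbf{Describing $H^1$.} For $l\geq1$, I filter $\ker_l\supset\widetilde{\ker}_l\supset\ima_{l+1}$, using Claim \ref{claim:H1} (1) for the second inclusion. This gives the short exact sequence
$$0\to \widetilde{\ker}_l/\ima_{l+1}\to \ker_l/\ima_{l+1}\to \ker_l/\widetilde{\ker}_l\to 0.$$
The quotient is $\oplus_{0\le i\le\lfloor l/p\rfloor}\F_p[x]\gamma_{l,i}$, as computed. The sub is given by Claim \ref{claim:H1} (2) or (3), according to whether $p\nmid l+1$ or $p\mid l+1$.

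\textbf{Non-splitness.} This is the main obstacle; the rest is already in the computations. I would exhibit a specific extension. Take $\gamma_{l,i}$ with $i\ge1$. By Claim \ref{claim:im1} (2) and (1), applied with $e_i=1$ to $p\gamma_{l,i}$ and to $p\gamma_{l,i+1}$, the element $p\gamma_{l,i}\in\widetilde{\ker}_l$ maps under the map of Claim \ref{claim:H1} to a class involving $\frac{(-x)^p}{(pi-1)\cdots}$ times a generator, which is nonzero in $\F_p[x]/(x^{p+1})\cdot x$. For $i=\lfloor l/p\rfloor$ when $p\mid l+1$, it instead lands in the $R/(l+1,x^{p+1})u^ldz$ summand.

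So $p$ times a lift of a generator of the quotient is a nonzero element of the sub. In a split extension this multiplication by $p$ would be zero, because the quotient is $p$-torsion. Hence the sequence is non-split. The step I would check most carefully is this $p$-multiplication computation: one must ensure that modifying the lift by elements of $\widetilde{\ker}_l$ cannot kill the image. This amounts to the fact that $p$ annihilates $\widetilde{\ker}_l/\ima_{l+1}$ except through the $\tilg_i$ terms, which appear with the unit coefficient $\frac{1}{(pi-1)\cdots(pi-p+1)}$.
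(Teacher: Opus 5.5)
Your assembly (matching word length to internal degree, the low-degree cases, and the filtration $\ker_l\supset\widetilde{\ker}_l\supset\ima_{l+1}$ via Claim~\ref{claim:H1}) is exactly the paper's proof, which only cites the Computations. The gap is in the non-splitness argument, which the paper does not supply. First, a bookkeeping slip. In the map of Claim~\ref{claim:H1}, the coefficient $\tilg_i$ of $p\gamma_{l,i}$ enters the $(i+1)$-st summand, with coefficient $-(-x)^p/((p(i+1)-1)\cdots(pi+1))$; the factor $(-x)^p/((pi-1)\cdots)$ multiplies $\tilg_{i-1}$. So $p\gamma_{l,\lfloor l/p\rfloor}$ maps to $0$ when $p\nmid l+1$. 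For $1\le l\le p-2$ the subobject is $0$, so there is nothing non-split to detect. Also, the subobject is not killed by $p$ when $p^2\mid l+1$ (the summand $R/(l+1,x^{p+1})$). The criterion you need is therefore $p\tilde q\notin pS$, not $pS=0$.

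The decisive problem is that your argument treats $\gamma_{l,i}$ with $i<\lfloor l/p\rfloor$ as cocycles, and they are not. The same is true of the quotient $\oplus_{0\le i\le\lfloor l/p\rfloor}\F_p[x]\gamma_{l,i}$ that you take from Computation~\ref{computation:ker1}. The $dx$-part of such a $\gamma_{l,i}$ ends with $c\,t_{zx}^{[l-p(i+1)+1]}u^{p(i+1)-1}dx$, where $c=(-x)^{p-1}/((p(i+1)-1)\cdots(pi+1))$. $D'_z$ sends this to $xc\,t_{zx}^{[l-p(i+1)]}u^{p(i+1)-1}$, a unit times $x^p$, and nothing else in $\gamma_{l,i}$ cancels it. The proof of Claim~\ref{claim:ker1} does derive $g_{pi-1,l-pi+1}\in pR$, but the statement drops it. That condition forces the $t_{zx}^{[l-pi]}u^{pi}dx$-coefficient of every cocycle into $pR$ for all $i<\lfloor l/p\rfloor$, so $\ker_l/\widetilde{\ker}_l\simeq\F_p[x]\gamma_{l,\lfloor l/p\rfloor}$ only.

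As a check, invert $x$: then $R[1/x]=\F_p[x^{\pm1}]$ and $D'_x=0$, and in every word length $D'_z$ is surjective with rank-one kernel. Hence $E^2_{-1,2l+2}[1/x]$ has rank $1$, not $\lfloor l/p\rfloor+1$.

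With only $\gamma=\gamma_{l,I}$, $I=\lfloor l/p\rfloor$, available, the two cases separate.
\begin{itemize}
\item[(a)] If $p\nmid l+1$, then $p\gamma$ equals $(D'_zdz,D'_xdx)$ applied to $\sum_{s=0}^{l+1-pI}\frac{(-x)^s}{(pI+s)\cdots(pI+1)}t_{zx}^{[l+1-pI-s]}u^{pI+s}$. So the sequence splits; for $p=2$, $l=2$ a direct computation gives $E^2_{-1,6}\simeq\F_2[x]/(x^2)\oplus\F_2[x]$.
\item[(b)] If $p\mid l+1$, your mechanism works. Here $p\gamma$ represents a unit multiple of $x^p\,u^ldz$, which is nonzero modulo $(l+1,x^{p+1})$ and not in $p$ times the subobject.
\end{itemize}
So statement (2) cannot be proved as written by any route: the quotient must be corrected to $\F_p[x]\gamma_{l,\lfloor l/p\rfloor}$, and non-splitness holds only when $p\mid l+1$.
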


\begin{proof}
    The statement (1) summarizes Computation \ref{computation:H0}. The statement (2) summarizes Computation \ref{computation:ker1}, Computation \ref{computation:im1}, and Computation \ref{computation:H1}. The statement (3) summarizes Computation \ref{computation:H2}.
\end{proof}

%\begin{cor}\label{cor:computation}
    
%\end{cor}

\begin{rem}\label{rem:extension-problem-and-multiplicative-structure}
    The extension problem for the spectral sequence in Theorem \ref{thm:computation} remains unsolved, nor have the multiplicative structure been specified. However, in the forthcoming work \cite{GY26} with Wei Yang, we will resolve the extension problem and specify the multiplicative structure by choosing a suitable DGA model of $\THH(\Z_p[x]/(px))^\wedge_p$.
\end{rem}

\end{document}